\newcommand{\fr}{\mathfrak{r}}
\newcommand{\fx}{\mathfrak{x}}
\newcommand{\fy}{\mathfrak{y}}
\newcommand{\calC}{\mathcal{C}}
\newcommand{\calL}{\mathcal{L}}
\DeclareMathOperator{\ev}{ev}
\DeclareSymbolFont{symbolsA}{U}{txsya}{m}{n}
\DeclareSymbolFont{symbolsC}{U}{txsyc}{m}{n}
\DeclareMathSymbol{\multimapdot}{\mathrel}{symbolsC}{20}
\DeclareMathSymbol{\multimapdotinv}{\mathrel}{symbolsC}{21}
\DeclareMathSymbol{\multimap}{\mathrel}{symbolsA}{40}
\DeclareMathSymbol{\multimapinv}{\mathrel}{symbolsC}{18}
\newcommand{\catA}{\catfont{A}}
\newcommand{\catB}{\catfont{B}}
\newcommand{\catC}{\catfont{C}}
\newcommand{\catY}{\catfont{Y}}
\newcommand{\SET}{\catfont{Set}}
\newcommand{\REL}{\catfont{Rel}}
\newcommand{\ORD}{\catfont{Ord}}
\newcommand{\EQ}{\catfont{Equ}}
\newcommand{\MET}{\catfont{GMet}}
\newcommand{\UMET}{\catfont{GUMet}}
\newcommand{\BMET}{\catfont{BHMet}}
\newcommand{\PBMET}{\catfont{BPMet}}
\newcommand{\Rels}[1]{#1\text{-}\catfont{Rel}}
\newcommand{\Cats}[1]{#1\text{-}\catfont{Cat}}
\newcommand{\Coalg}[1]{\catfont{CoAlg}(#1)}
\newcommand{\LaxF}{\catfont{Lax(\ftF)}}
\newcommand{\LiftF}{\catfont{Lift(\ftF)}}
\newcommand{\LiftFi}{\catfont{Lift(\ftF)_I}}
\newcommand{\PredF}{\catfont{Pred(\ftF)}}
\newcommand{\PredFm}{\catfont{Pred(\ftF)_M}}
\newcommand{\op}{\mathrm{op}}
\newcommand{\ar}{\mathrm{ar}}
\newcommand{\sep}{\mathrm{sep}}
\newcommand{\sym}{\mathrm{sym}}
\newcommand{\two}{\catfont{2}}
\newcommand{\V}{\mathcal{V}}
\newcommand{\catfont}[1]{\mathsf{#1}}
\def\slashedarrowfill@#1#2#3#4#5{
  $\m@th\thickmuskip0mu\medmuskip\thickmuskip\thinmuskip\thickmuskip
   \relax#5#1\mkern-7mu
   \cleaders\hbox{$#5\mkern-2mu#2\mkern-2mu$}\hfill
   \mathclap{#3}\mathclap{#2}
   \cleaders\hbox{$#5\mkern-2mu#2\mkern-2mu$}\hfill
   \mkern-7mu#4$
}
\newcommand*{\rightrelarrowfill@}{\slashedarrowfill@\relbar\relbar{\raisebox{0pc}{$\mapstochar$}}\rightarrow}
\newcommand*{\xrelto}[2][]{\ext@arrow 0055{\rightrelarrowfill@}{\;#1\;}{\;#2\;}}
\newcommand*{\relto}{\xrelto{}}
\newcommand*{\rightmodarrowfill@}{\slashedarrowfill@\relbar\relbar{\raisebox{0pc}{$\hspace{1pt}\circ$}}\rightarrow}
\newcommand*{\xmodto}[2][]{\ext@arrow 0055{\rightmodarrowfill@}{\;#1\;}{\;#2\;}}
\newcommand*{\modto}{\xmodto{\;}}
\newcommand*{\rightkrelarrowfill@}{\slashedarrowfill@\relbar\relbar{\raisebox{0pc}{$\hspace{1pt}\mapstochar$}}\rightharpoonup}
\newcommand*{\xkrelto}[2][]{\ext@arrow 0055{\rightkrelarrowfill@}{\;#1\;}{\;#2\;}}
\newcommand*{\rightkmodarrowfill@}{\slashedarrowfill@\relbar\relbar{\raisebox{0pc}{$\hspace{1pt}\circ$}}\rightharpoonup}
\newcommand*{\xkmodto}[2][]{\ext@arrow 0055{\rightkmodarrowfill@}{\;#1\;}{\;#2\;}}
\newcommand{\dasht}{\rotatebox[origin=c]{90}{\(\dashv\)}}
\newcommand{\tdash}{\rotatebox[origin=c]{270}{\(\dashv\)}}
\newcommand{\ftD}{\functorfont{D}}
\newcommand{\ftF}{\functorfont{F}}
\newcommand{\ftG}{\functorfont{G}}
\newcommand{\ftI}{\functorfont{I}}
\newcommand{\ftM}{\functorfont{M}}
\newcommand{\ftN}{\functorfont{N}}
\newcommand{\ftP}{\functorfont{P}}
\newcommand{\ftId}{\functorfont{Id}}
\DeclareMathOperator{\bd}{bd}
\newcommand{\ftII}[1]{{\catfont{|}{#1}\catfont{|}}}
\newcommand{\ftbF}{\functorfont{\overline{F}}}
\newcommand{\eF}{\widehat{\ftF}}
\newcommand{\functorfont}{\mathsf}
\DeclareMathAlphabet{\mathpzc}{OT1}{pzc}{m}{it}
\DeclareMathOperator{\yoneda}{\mathpzc{y}}
\newcommand{\df}[1]{\emph{\textbf{#1}}}
\tikzset{
  relational/.style={
    outer sep=3pt,
    decoration={
      markings,
      mark=at position 0.5 with {\node[transform shape] (tempnode) {\tiny $\rvert$};},
    },
    postaction={decorate},
  },
}
\tikzset{
  distrib/.style={
    outer sep=3pt,
    decoration={
      markings,
      mark=at position 0.5 with {\node[transform shape] (tempnode) {\tiny
          \rmfamily o};},
    },
    postaction={decorate},
  },
}
\tikzset{
  symbol/.style={
    draw=none, every to/.append style={
      edge node={node [sloped, allow upside down,auto=false]{$#1$}}} } }
\newcommand{\one}{\catfont{1}}
\def\nlabel#1#2{\begingroup #2
\def\@currentlabel{#2}
\phantomsection\label{#1}\endgroup
}
\spnewtheorem{defn}[theorem]{Definition}{\bfseries}{\rmfamily}
\newcommand{\Sema}[1]{\llbracket{#1}\rrbracket_\alpha}
\begin{document}

\title{Kantorovich Functors and Characteristic Logics for Behavioural Distances}

\author{Sergey Goncharov\inst{1}\orcidID{0000-0001-6924-8766}\thanks{Funded by the Deutsche Forschungsgemeinschaft (DFG, German Research Foundation) -- project number 501369690}, Dirk Hofmann \inst{2}\orcidID{0000-0002-1082-6135}\thanks{Funded by The Center for Research and Development in Mathematics and Applications (CIDMA) through the Portuguese Foundation for Science and Technology (FCT) -- project numbers UIDB/04106/2020 and UIDP/04106/2020}, Pedro Nora \inst{1}\orcidID{0000-0001-8581-0675}\thanks{Funded by the Deutsche Forschungsgemeinschaft (DFG, German Research Foundation) -- project number 259234802}, Lutz Schröder \inst{1}\orcidID{0000-0002-3146-5906}\thanks{Funded by the Deutsche Forschungsgemeinschaft (DFG, German Research Foundation) -- project number 434050016} and Paul~Wild \inst{1}\orcidID{0000-0001-9796-9675}}

\institute{Friedrich-Alexander-Universität Erlangen-Nürnberg, Germany \and Center for Research and Development in Mathematics and Applications, University of Aveiro, Portugal}

\authorrunning{S.\ Goncharov et al.}
\maketitle

\begin{abstract}
	Behavioural distances measure the deviation between states in
	quantitative systems, such as probabilistic or weighted systems.
	There is growing interest in generic approaches to behavioural
	distances.  In particular, coalgebraic methods capture variations in
	the system type (nondeterministic, probabilistic, game-based etc.),
	and the notion of \emph{quantale} abstracts over the actual values
	distances take, thus covering, e.g., two-valued equivalences,
	(pseudo)metrics, and probabilistic (pseudo)metrics.  Coalgebraic
	behavioural distances have been based either on \emph{liftings} of
	\(\SET\)-functors to categories of metric spaces, or on \emph{lax
		extensions} of \(\SET\)-functors to categories of quantitative
	relations. Every lax extension induces a functor lifting but not
	every lifting comes from a lax extension.
	It was shown recently that every lax extension is Kantorovich, i.e.\ induced by a
	suitable choice of monotone predicate liftings, implying via a
	quantitative coalgebraic Hennessy-Milner theorem that behavioural
	distances induced by lax extensions can be characterized by
	quantitative modal logics. Here, we essentially show
	the same in the more general setting of behavioural distances
	induced by functor liftings. In particular, we show that every
	functor lifting, and indeed every functor on (quantale-valued)
	metric spaces, that preserves isometries
	is Kantorovich, so that the induced behavioural distance (on systems
	of suitably restricted branching degree) can be characterized by a
	quantitative modal logic.
\end{abstract}

\section{Introduction}

Qualitative transition systems, such as standard labelled transition
systems, are typically compared under two-valued notions of
behavioural equivalence, such as Park-Milner bisimilarity. For
quantitative systems, such as probabilistic, weighted, or metric
transition systems, notions of \emph{behavioural distance} allow for a
more fine-grained comparison, in particular give a numerical measure
of the deviation between inequivalent states, instead of just flagging
them as inequivalent~\cite{GJS90,BW05,AFS09,LarsenEA11}.

The variation found in the mentioned system types calls for unifying
methods, and correspondingly has given rise to generic notions of
behavioural distance based on \emph{universal
	coalgebra}~\cite{Rut00a}, a framework for state-based systems in
which the transition type of systems is encapsulated as an
(endo-)functor on a suitable base category.  Coalgebraic behavioural
distances have been defined on the one hand using \emph{liftings} of
given set functors to the category of metric spaces~\cite{BBKK18}, and
on the other hand using \emph{lax extensions}, i.e.\ extensions of set
functors to categories of quantitative
relations~\cite{Gavazzo18,WS20}. Since every lax extension induces a
functor lifting in a straightforward way~\cite{WS20} but on the other
hand not every functor lifting is induced by a lax extension, the
approach via liftings is more widely applicable. On the other hand, it
has been shown that every lax extension is \emph{Kantorovich}, i.e.\
induced by a suitable choice of modalities, modelled as predicate
liftings in the spirit of coalgebraic logic~\cite{Pat04,Sch08}. Using
quantitative coalgebraic Hennessy-Milner theorems, it follows
that under expected conditions on the functor and the lax extension,
behavioural distance coincides with logical distance.

Roughly speaking, our main contribution in the present paper is to
show that the same holds for functor liftings and their induced
behavioural distances. In more detail, we have the following (cf.~Figure~\ref{fig:big-picture} for a graphical summary):
\begin{itemize}
	\item \emph{Every} lifting of a set functor is \emph{topological},
	      i.e.\ induced by a generalized form of predicate liftings in which
	      one may need to switch to non-standard spaces of truth values for
	      the predicates involved (\autoref{p:54}).
	\item Functor liftings that preserve  isometries are \emph{Kantorovich}, i.e.\ induced by (possibly
	      polyadic) predicate liftings. (Here, we understand predicate
	      liftings as involving only the standard space of truth values --
	      that is, the unit interval, in the case of $1$-bounded metric
	      spaces). In fact, preservation of isometries is also
	      necessary (\autoref{p:53}).
	\item Lastly, we detach the technical development from set functors,
	      and show that a functor on (pseudo)metric spaces is
	      \emph{Kantorovich}, in the sense that the distance of its elements
	      can be characterized by predicate liftings, iff it preserves isometries~(\autoref{thm:kantorovich-initial}).
\end{itemize}
By a recent coalgebraic quantitative Hennessy-Milner theorem that fits
this level of generality~\cite{ForsterEA23}, it follows that given a
functor~$\ftF$ on (pseudo<)metric spaces that preserves isometries, acts non-expansively on morphisms, and admits a dense
finitary subfunctor, behavioural distance can be characterized by
quantitative modal logic (\autoref{cor:unit-hm}). In additional
results, we further clarify the relationship between functor liftings
and lax extensions, and in particular characterize the functor
liftings that are induced by lax extensions (\autoref{p:19}).

Indeed, we conduct the main technical development not only in
coalgebraic generality, but also parametric in a
quantale, hence abstracting both over distances and over truth
values. One benefit of this generality is that our results cover the
two-valued case, captured by the two-element quantale. In particular,
one instance of our results is the fact that every finitary set
functor has a separating set of finitary predicate liftings, and hence
admits a modal logic having the Hennessy-Milner
property~\cite{Sch08}. Moreover, we do
not restrict to symmetric distances, and hence cover also simulation
preorders and simulation distances~\cite{LarsenEA11}.

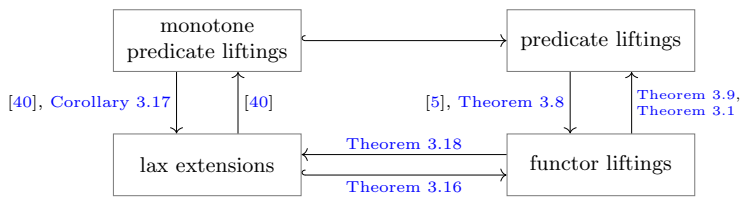
\begin{figure}[htp]
	\[
		\begin{tikzcd}[cells={nodes={draw=gray}},column sep=20ex, row sep=6ex,style={every label/.append style={scale=.9},cells={nodes={scale=.9}}}]
			\parbox[][5ex][c]{2.5cm}{\centering monotone\\ predicate liftings}
			\ar[r,hook]
			\dar["\text{\cite{WS20}, \autoref{p:62}}"',xshift=-3ex] &
			\parbox[][5ex][c]{2.5cm}{\centering predicate liftings}
			\dar[xshift=-3ex,"\text{\cite{BBKK18}, \autoref{p:51}}"']\\
			\parbox[][5ex][c]{2.5cm}{\centering lax extensions\\{}}
			\rar[yshift=-1ex,hook, "\text{\autoref{p:61}}"']
			\uar["\text{\cite{WS20}}"',xshift=3ex] &
			\parbox[][5ex][c]{2.5cm}{\centering functor liftings\\{}}
			\lar[yshift=1ex, "\text{\autoref{p:19}}"']
			\uar[xshift=3ex,"\parbox{5em}{\tiny\autoref{p:53},\\ \autoref{p:54}}"']
		\end{tikzcd}
	\]
	\caption{Summary of connections (a rigorous categorical interpretation of these
		connections involves a square of adjunctions~\eqref{eq:adjunctions}).}
	\label{fig:big-picture}
\end{figure}
\vspace{-2em}
\paragraph*{Related Work} Quantale-valued quantitative notions of
bisimulation for functors that already live on generalized metric
spaces (rather than being lifted from functors on sets) have been
considered early on~\cite{Wor00}. We have already mentioned previous
work on coalgebraic behavioural metrics, for functors originally
living on sets, via metric liftings~\cite{BBKK18} and via lax
extensions~\cite{Gavazzo18,WS20}.
Existing work that combines coalgebraic and quantalic generality and
accommodates asymmetric distances, like the present work, has so far
concentrated on establishing so-called van Benthem theorems, concerned
with characterizing (coalgebraic) quantitative modal logics by
bisimulation invariance~\cite{WS21}. There is a line of work on
Kantorovich-type coinductive predicates at the level of generality of
topological categories~\cite{KKH+19,KKK+21} (phrased in fibrational
	terminology), with results including a game characterization and
expressive logics for coinductive predicates already assumed to be
Kantorovich in a general sense, i.e.~induced by variants of predicate
liftings. In this work, the condition of preserving isometries
already shows up as \emph{fiberedness}, and indeed  the
condition already appears in work on metric liftings~\cite{BBKK18}. As
mentioned in the above discussion, we complement existing work on
quantitative coalgebraic Hennessy-Milner
theorems~\cite{KM18,WS20,ForsterEA23} by establishing the Kantorovich
property they assume.

\section{Preliminaries}

We will need a fair amount of material on coalgebra, quantales and
quantale-enriched categories (generalizing metric spaces), predicate
liftings, and lax extension, which we recall in the sequel. New
material starts in Section~\ref{sec:lif-vs-pl}.

\subsection{Categories and Coalgebras}

We assume basic familiarity with category theory~\cite{AHS90,Awodey10}.
More specifically, we make extensive use of topological categories \cite{AHS90} and quantale-enriched categories \cite{Law73,Kel82,Stu14}.
Recall that a \df{coalgebra} for a functor \(\ftF \colon \catC \to \catC\)
consists of an object~\(X\) of \(\catC\), thought of as an object of
\emph{states}, and a morphism $\alpha \colon X \rightarrow \ftF X$, thought of
as assigning structured collections (sets, distributions, etc.) of successors to
states. A \df{coalgebra morphism} from \((X, \alpha)\) to \((Y, \beta)\) is a
morphism $f \in \catC(X, Y)$ such that \(\beta \cdot f = \ftF f \cdot \alpha\).
We will focus on \df{concrete categories} over \(\SET\), that is categories that
come equipped with a faithful functor $\ftII{-}\colon \catC \rightarrow \SET$,
which allows speaking about individual \emph{states} as elements of $\ftII{X}$.
A \df{lifting} of an endofunctor $\ftF\colon\SET\to\SET$ to $\catC$ is an endofunctor
$\overline{\ftF}\colon\catC\to\catC$ such that $\ftII{-}\cdot\overline\ftF =
	\ftF\cdot\ftII{-}$.
\begin{example}
	\label{p:8}
	Some functors of interest for coalgebraic modelling are as follows.
	\begin{enumerate}[wide]
		\item The \emph{finite powerset functor} \(\ftP_\omega \colon \SET \to \SET\) maps each set to its finite powerset, and for a map~$g$, $\ftP_\omega(g)$ takes direct images under~$g$.
		      Given a set~\(A\) (of \emph{labels)}, coalgebras for the the functor \(\ftP_\omega(A \times -)\)
		      are finitely branching $A$-labelled transition systems.
		\item \label{p:7} The \emph{finite distribution functor} \(\ftD_\omega \colon \SET \to \SET\) maps a set $X$
		      to the set $\ftD_\omega X$ of finitely supported probability distributions on $X$.
		      Given a finite set \(A\), coalgebras for the functor \((1 + \ftD_\omega)^A\), are probabilistic transition 		systems~\cite{DBLP:journals/iandc/LarsenS91,DBLP:journals/iandc/DesharnaisEP02}.
	\end{enumerate}
	Finitary functors are those which are determined by their action on finite sets.
	More precisely, a functor is finitary if for every set \(X\) and
	every \(\fx \in \ftF X\), there is a finite subset inclusion \(m \colon A \to X\) such
	that \(\fx\) is in the image of \(\ftF m\).

	Standard examples of non-finitary functors are as follows.
	\begin{enumerate}[wide,resume]
		\item The (unbounded) powerset functor \(\ftP \colon \SET \to \SET\).
		\item The neighbourhood functor \(\ftN \colon \SET \to \SET\) sends a set \(X\) to the set \(\ftP\ftP
		      X\), and a function \(f \colon X \to Y\) to the function \(\ftN f \colon
		      \ftN X \to \ftN Y\) that assigns to every element \(\fx \in \ftN X\) the set
		      \(\{B \subseteq Y \mid f^{-1} B \in \fx \}\).
	\end{enumerate}
\end{example}

\subsection{Quantales and Quantale-Enriched Categories}
\label{sec:q-rel+cat}

A central notion of our development is that of a quantale, which will
serve as a parameter determining the range of truth values and
distances.  A \df{quantale}~\((\V,\otimes,k)\), more precisely a
commutative and unital quantale, is a complete lattice $\V$ -- with
joins and meets denoted by~$\bigvee$ and $\bigwedge$, respectively --
that carries the structure of a commutative monoid with
\df{tensor}~$\otimes$ and \df{unit}~$k$, such that for every
$u \in \V$, the map $u \otimes - \colon \V\to \V$ preserves suprema.
This entails that every $u \otimes - $ has a right
adjoint $\hom(u,-) \colon \V \to \V$, characterized by the property
\( u \otimes v \le w \iff v \le \hom(u,w).  \) We denote by $\top$ and
$\bot$ the greatest and the least element of a quantale, respectively.
A quantale is \df{non-trivial} if $\bot\neq\top$, and \df{integral} if
\(\top = k\).

\begin{example}
	\label{p:30}
	\begin{enumerate}[wide]
		\item\label{item:frame} Every frame (i.e.\ a complete lattice in which binary meets distribute over
		infinite joins) is a quantale with \(\otimes = \wedge\) and \(k = \top\).
		In particular, every finite distributive lattice is a quantale, prominently~\(\two\), the two-element lattice $\{\bot,\top\}$ and \(\one\), the trivial quantale.
		\item Every left continuous \(t\)-norm \cite{AFS06} defines a quantale on the unit interval equipped with its natural order.
		\item \label{ex:metric-q} The previous clause (up to isomorphism) further specializes as follows:
		      \begin{enumerate}[left=2ex]
			      \item The quantale \([0,\infty]_+ = ([0,\infty], \inf, +, 0)\) of non-negative real numbers with infinity, ordered by the greater or equal relation, and with tensor given by addition.
			      \item The quantale \([0,\infty]_{\max} = ([0,\infty], \inf, \max, 0)\) of non-negative real numbers with infinity, ordered by the greater or equal relation, and with tensor given by maximum.
			      \item \label{p:4} The quantale \([0,1]_\oplus = ([0,1], \inf, \oplus, 0)\) of the unit interval, ordered by the greater or equal order, and with tensor given by truncated addition.
		      \end{enumerate}
		      (Note that the quantalic order here is dual to the standard numeric order).
		\item Every commutative monoid \((M, \cdot, e)\) generates a quantale on \(\ftP M\)
		      (the free quantale over~\(M\)) w.r.t.\ set inclusion and with the tensor
		      \(
		      A \otimes B = \{ a \cdot b \mid a \in A \text{ and } b \in B\},
		      \)
		      for all \(A,B \subseteq M\).
		      The unit of this multiplication is the set \(\{e\}\).
	\end{enumerate}
\end{example}

A \df{\(\V\)-category}
is pair \((X,a)\) consisting of a set $X$ and a map
$a \colon X \times X \to \V$ such that \(k \le a(x,x)\) and
\(a(x,y) \otimes a(y,z) \leq a(x,z)\) for all $x,y,z \in X$. We view
$a$ as a (not necessarily symmetric) \emph{distance} function, noting however that objects with `greater' distance should be seen as being closer together. A \(\V\)-category \((X,a)\) is
\df{symmetric} if \(a(x,y) = a(y,x)\) for all \(x,y \in X\).
Every $\V$-category \((X,a)\) carries a \df{natural order} defined by
\(
x \le y \text{ whenever } k \le a(x,y),
\)
which induces a faithful functor $\Cats{\V} \to \ORD$.
A \(\V\)-category is \df{separated} if its natural order is antisymmetric.
A \df{$\V$-functor} $f \colon (X,a) \to (Y,b)$ is a map $f \colon X \to Y$ such that, for all $x,y \in X$, \( a(x,y) \le b(f(x),f(y)).\)
\(\V\)-categories and \(\V\)-functors form the category \(\Cats{\V}\), and we denote by \(\Cats{\V}_\sym\) the full subcategory of \(\Cats{\V}\) determined by the symmetric \(\V\)-categories and by \(\Cats{\V}_{\sym,\sep}\) the full subcategory of \(\Cats{\V}_{\sym}\) determined by the separated symmetric \(\V\)-categories.

\begin{example}
	\begin{enumerate}[wide]
		\item The Category \(\Cats{\one}\) is equivalent to the category \(\SET\) of \df{sets} and functions.
		\item  The category $\Cats{\two}$ is equivalent to the category $\ORD$ of \df{preordered sets} and monotone maps.
		\item  Metric, ultrametric and bounded metric spaces \`a la Lawvere \cite{Law73} can be seen as quantale-enriched categories:
		      \begin{enumerate}[left=3ex]
			      \item The category \(\Cats{[0,\infty]_+}\) is equivalent to the category \(\MET\) of generalized \df{metric spaces} and non-expansive maps.
			      \item The category \(\Cats{[0,\infty]_{\max}}\) is equivalent to the category $\UMET$ of generalized \df{ultrametric spaces} and non-expansive maps.
			      \item The category $\Cats{[0,1]_{\oplus}}$ is equivalent to the category $\BMET$ of \df{bounded-by-$1$ hemimetric spaces} and non-expansive maps.
		      \end{enumerate}
		\item Categories enriched in a free quantale \(\ftP M\) on a monoid \(M\) can be interpreted as sets equipped with a non-deterministic \(M\)-valued structure.
		\end{enumerate}
\end{example}
\begin{table}[t]
	\begin{center}
		\begin{tabular}{l|>{\raggedright\arraybackslash}p{3.6cm}|>{\raggedright\arraybackslash}p{5cm}}
			General $\V$            & Qualitative ($\V=2$)                   & Quantitative ($\V=[0,1]_\oplus$) \\\hline
			$\V$-category           & preorder                               & bounded-by-1 hemimetric space   \\
			symmetric $\V$-category & equivalence                            & bounded-by-1 pseudometric space \\
			$\V$-functor            & monotone map                           & non-expansive map                \\
			initial $\V$-functor    & order-reflecting\newline monotone  map & isometry                         \\
		\end{tabular}
	\end{center}
	\caption{$\V$-categorical notions in the qualitative and the quantitative setting. The prefix `pseudo' refers to absence of separatedness, and the prefix `hemi' additionally indicates absence
		of symmetry.}
	\label{fig:sep}
\vspace{-2em}
\end{table}
We focus on $\V=\two$ and $\V=[0,1]_\oplus$, which we will use to capture classical (qualitative) and metric (quantitative) aspects of system behaviour, respectively..
Table~\ref{fig:sep} provides some instances of generic quantale-based concepts (either introduced above or to be introduced presently) in these two cases, for further reference.

A $\V$-category~$(X,a)$ is \df{discrete} if $a=1_X$, and \df{indiscrete} if $a(x,y)=\top$ for all $x,y\in X$. The \df{dual} of \((X,a)\) is the
\(\V\)-category \({(X,a)}^\op = (X,a^\circ)\) given by
$a^\circ(x,y)=a(y,x)$. Given a set~$X$ and a \df{structured cone},
i.e.\ a family \((f_i \colon X \to \ftII{(X_i,a_i)})_{i\in I}\) of
maps into $\V$-categories $(X_i,a_i)$, the \df{initial structure}
\(a \colon X \times X \to \V\) on~$X$ is defined by
\( a(x,y) = \bigwedge_{i\in I}a_i(f_i(x),f_i(y)), \) for all
$x,y\in X$.  A cone \(((X,a) \to (X_i,a_i))_{i \in I}\) is said to be
\df{initial} (w.r.t.\ the forgetful functor
\(\ftII{-} \colon \Cats{\V} \to \SET\)) if \(a\) is the initial
structure w.r.t.\ the structured cone
\((X \to \ftII{(X_i,a_i)})_{i \in I}\); a \(\V\)-functor is \df{initial} if
it forms a singleton initial cone.  For every \(\V\)-category \((X,a)\) and
every set \(S\), the \df{\(S\)-power}~\((X,a)^S\) is the
\(\V\)-category consisting of the set of all functions from \(S\) to
\(X\), equipped with the \(\V\)-category structure \([-,-]\) given by
\( [f,g] = \bigwedge_{x \in X} a(f(x),g(x)), \) for all
\(f,g \colon S \to X\).  By equipping its hom-sets with the
substructure of the appropriate power, the category \(\Cats{\V}\)
becames \(\Cats{\V}\)-enriched and, hence, also \(\ORD\)-enriched
w.r.t to the corresponding natural order of \(\V\)-categories.  We say
that an endofunctor on \(\Cats{\V}\) is \df{locally monotone} if it
preserves this preorder.

\begin{remark}\label{rem:sym-sep}
	Let us briefly outline the connections between \(\Cats{\V}\) and \(\Cats{\V}_\sym\), which for real-valued $\V$ correspond to hemimetric and pseudometric spaces, respectively.
	By virtue of the above construction of initial structures, the categories \(\Cats{\V}\) and  \(\Cats{\V}_\sym\) are topological over \(\SET\)~\cite{AHS90};
	in particular, both categories are complete and cocomplete.
	Moreover,  \(\Cats{\V}_\sym\) is a (reflective and) coreflective full subcategory of \(\Cats{\V}\).
	The coreflector \((-)_s \colon \Cats{\V} \to \Cats{\V}_\sym\) is identity on morphisms and sends every \((X,a)\) to its symmetrization, the \(\V\)-category \((X,a_s)\) where \(a_s(x,y)= a(x,y) \wedge a(y,x)\) (keep in mind that in \autoref{p:30}.\ref{ex:metric-q}, the order is the dual of the numeric order).
\end{remark}
Finally, we note that for every quantale $\V$, $(\V,\hom)$ is a $\V$-category, which for simplicity we also denote by~\(\V\).
The following result records two fundamental properties of the \(\V\)-category \(\V\).

\begin{proposition}\label{d:prop:2}
	The \(\V\)-category \(\V=(\V,\hom)\) is injective w.r.t.\ initial morphisms, and for every \(\V\)-category \(X\),
	the cone \((f \colon X \to \V)_{f}\) is initial. 
\end{proposition}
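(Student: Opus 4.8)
The plan is to prove the two assertions separately, each by a short computation with the quantale operations; both are incarnations of the enriched Yoneda principle. Throughout I will freely use that $u\otimes-$ preserves joins and is left adjoint to $\hom(u,-)$, and that the $\V$-functor condition $f\colon (X,a)\to\V$ unfolds to $a(x,y)\otimes f(x)\le f(y)$.

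For the injectivity statement, unpack what must be shown: given an initial $\V$-functor $m\colon(A,a)\to(B,b)$ — so that $b(m(x),m(x'))=a(x,x')$ for all $x,x'\in A$ — and an arbitrary $\V$-functor $g\colon(A,a)\to\V$, I must produce a $\V$-functor $h\colon(B,b)\to\V$ with $h\cdot m=g$. The natural candidate is the left Kan extension of $g$ along $m$, namely
\[
	h(y)=\bigvee_{x\in A} b(m(x),y)\otimes g(x).
\]
That $h$ is a $\V$-functor, i.e.\ $b(y,y')\otimes h(y)\le h(y')$, follows by distributing $\otimes$ over the join and applying the composition law $b(m(x),y)\otimes b(y,y')\le b(m(x),y')$ to each summand. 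For $h\cdot m=g$, I evaluate $h(m(x'))=\bigvee_{x} a(x,x')\otimes g(x)$, where the replacement of $b(m(x),m(x'))$ by $a(x,x')$ is exactly the use of initiality of $m$; the summand $x=x'$ gives $h(m(x'))\ge k\otimes g(x')=g(x')$ since $a(x',x')\ge k$, while the $\V$-functor law for $g$ bounds every summand above by $g(x')$.

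For the second statement, I must show that the initial structure induced by the cone of all $\V$-functors into $\V$ recovers $a$, that is, $a(x,y)=\bigwedge_{f}\hom(f(x),f(y))$, the meet ranging over all $\V$-functors $f\colon X\to\V$. The inequality $a(x,y)\le\bigwedge_f\hom(f(x),f(y))$ is immediate, being just the $\V$-functor condition for each $f$. For the reverse inequality it suffices to exhibit a single $f$ attaining the bound: I take the covariant representable $f=a(x,-)$, which is a $\V$-functor precisely because the composition law gives $a(z,z')\otimes a(x,z)\le a(x,z')$. Then $\hom(f(x),f(y))=\hom(a(x,x),a(x,y))\le\hom(k,a(x,y))=a(x,y)$, using $a(x,x)\ge k$ and antitonicity of $\hom$ in its first argument, which yields $\bigwedge_f\hom(f(x),f(y))\le a(x,y)$.

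I expect the arguments themselves to be routine; the two genuine points of care are matters of variance and of locating where each hypothesis is used. In the injectivity part, initiality of $m$ is \emph{exactly} what makes the upper bound $h(m(x'))\le g(x')$ go through — without it one only knows $b(m(x),m(x'))\ge a(x,x')$, which could break the equality $h\cdot m=g$. In the cone part, one must use the covariant representable $a(x,-)$ rather than $a(-,x)$, since the latter is a $\V$-functor only on $X^{\op}$; choosing the wrong variance is the easiest way to go astray. Everything else reduces to bookkeeping with the join-preservation of $\otimes$ and the adjunction $u\otimes-\dashv\hom(u,-)$.
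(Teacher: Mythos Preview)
Your proof is correct; both parts are the standard enriched-Yoneda arguments, carried out accurately. The paper, however, does not actually prove this proposition: it is stated as recording ``two fundamental properties of the \(\V\)-category \(\V\)'' and left without proof (and is not revisited in the appendix), so there is no authorial argument to compare against. Your write-up would serve perfectly well as the missing justification; the only cosmetic point is that your closing commentary slightly overstates the subtlety --- these are textbook facts in quantale-enriched category theory --- but the identification of where initiality enters (the upper bound \(h(m(x'))\le g(x')\)) and of the correct variance for the representable are both spot on.
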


\subsection{Predicate Liftings}
\label{sec:pl}

Given a cardinal \(\kappa\) and a \(\V\)-category \(X\), a \(\kappa\)-ary \df{\(X\)-valued predicate lifting} for a functor \(\ftF \colon \Cats{\V} \to \Cats{\V}\) is a natural transformation
\(
\lambda \colon\Cats{\V}(-,X^\kappa) \to \Cats{\V}(\ftF-,X).
\)
When \(\V\) is the trivial quantale, we identify an \(X\)-valued predicate lifting with a natural transformation
\(
\lambda \colon \SET(-,X^\kappa) \to \SET(\ftF-,X)
\)
via the isomorphism \(\SET \cong \Cats{1}\).
In this case, we are primarily interested in predicate liftings valued in the underlying set of another quantale, and we say that such predicate liftings are \df{monotone} if each of its components is a monotone map w.r.t.\ the pointwise order induced by that quantale.

\begin{remark}
	\label{p:43}
	By the Yoneda lemma, every \(\kappa\)-ary \(X\)-valued predicate lifting for a functor \(\ftF \colon \Cats{\V}\to \Cats{\V}\) is  determined by a \(\V\)-functor \(\ftF X^\kappa \to X\).
	In particular, the collection of all \(X\)-valued \(\kappa\)-ary predicate liftings for a functor is a set.
\end{remark}

\begin{example}
	\label{p:6}
	\begin{enumerate}[wide]
		\item The Kripke semantics of the standard diamond modality \(\Diamond\) of the modal logic \(K\) is induced (in a way recalled in Section~\ref{sec:log}) by the unary predicate lifting $\Diamond_X(A) = \{B \subseteq X \mid A \cap B \neq \varnothing\}$ for the (finite) powerset functor (modulo the isomorphism $\mathcal{P} X\cong\SET(X,\two)$).
		\item \label{p:10} Computing the expected value for a given $[0,1]$-valued function with respect to each probability distribution defines a unary \([0,1]\)-valued predicate lifting for the functor \(\ftD_\omega \colon \SET \to \SET\), which we denote by \(\mathbb{E}\).
	\end{enumerate}
\end{example}

\subsection{Quantale-Enriched Relations and Lax Extensions}
\label{sec:qr+le}

The structure of a quantale-enriched category is a particular kind of ``enriched relation''.
For a quantale $\V$ and sets $X$ and $Y$, a \df{$\V$-relation} from $X$ to $Y$ is a map $r\colon X \times Y \to \V$; we then write $r\colon X \relto Y$.
As for ordinary relations, a pair of $\V$-relations \(r \colon X \relto Y\) and \(s \colon Y \relto Z\) can be composed via ``matrix multiplication'':
\(
(s \cdot r)(x,z)=\bigvee_{y \in Y} r(x,y) \otimes s(y,z)
\)
for \(x \in X\), \(z \in Z\).
With this composition, the collection of all sets and \(\V\)-relations between them form a category, denoted~\(\Rels{\V}\).
The identity morphism on a set \(X\) is the \(\V\)-relation \(1_X \colon X \relto X\) that sends every diagonal element to \(k\) and all the others to \(\bot\).

\begin{example}
	The category of $2$-relations is the usual category \(\REL\) of sets and relations.
	Quantitative or ``fuzzy'' relations are usually defined as \([0,1]_\oplus\)-relations (e.g.~\cite{WS20,BBKK18}).
\end{example}
The category \(\Rels{\V}\) comes with an involution
\(
{(-)}^\circ \colon {\Rels{\V}}^\op \to \Rels{\V}
\)
that maps objects identically and sends a \(\V\)-relation \({r \colon X \relto Y}\) to the \(\V\)-relation \(r^\circ \colon Y \relto X\) given by \(r^\circ(y,x) = r(x,y)\), the \df{converse} of~\(r\).
Moreover, by equipping its hom-sets with the pointwise order induced by \(\V\), \(\Rels{\V}\) is made into a quantaloid (e.g.~\cite{Rosenthal95}), i.e.\ enriched over complete join semilattices.
This entails that there is an optimal way of extending a \(\V\)-relation \(r \colon X \relto Y\) along a \(\V\)-relation \(s \colon X \relto Z\):
the (Kan) \df{extension} of \(r\) along \(s\) is the \(\V\)-relation  \(r \multimapdotinv s \colon Z \relto Y\)  defined by the property
\(
t \cdot s \leq r \iff t \leq r \multimapdotinv s,
\)
for all \(t \colon Z \relto Y\).

A \df{lax extension}
\footnote{
	Extensions of \(\SET\)-functors to \(\REL\) are also commonly referred to as ``relators'', ``relational liftings'' or ``lax relational liftings''.
}
of a functor \(\ftF \colon \SET \to \SET\) to \(\Rels{\V}\) is a lax functor \(\eF \colon \Rels{\V} \to \Rels{\V}\) that agrees with \(\ftF\) on sets and whose action on functions is compatible with \(\ftF\).
To make the latter requirement precise, we note that a function is interpreted as the \(\V\)-relation that sends every element of its graph to \(k\) and all the others to \(\bot\);
then, a lax extension of \(\ftF\) to \(\Rels{\V}\), or simply a lax extension, is a map
\(
(r \colon X \relto Y) \longmapsto (\eF r \colon \ftF X \relto \ftF Y)
\)
such that:
\begin{description}
	\item[\nlabel{p:102}{(L1)}] $r \le r' \implies \eF r \le \eF r'$,
	\item[\nlabel{p:26}{(L2)}] $\eF s\cdot\eF r\le\eF (s\cdot r)$,
	\item[\nlabel{p:0}{(L3)}] $\ftF f \le \eF f$ and
		${(\ftF f)}^\circ\le\eF(f^\circ)$,
\end{description}
for all \(r \colon X \relto Y\), \(s \colon Y \relto Z \) and \(f \colon X \to Y\).

\begin{example}
	\label{p:3}
	The generalized ``lower-half'' Egli-Milner order between powersets, which for a relation \(r \colon X \relto Y\) is defined as the relation \(\widehat{\ftP}r \colon \ftP X \relto \ftP Y\) given by
	\[
		A (\widehat{\ftP}r) B \iff \forall a \in A.\, \exists b \in B.\, a \mathrel{r} b,
	\]
	defines a lax extension of the powerset functor \(\ftP \colon \SET \to \SET\) to \(\REL\).
	Similarly, the generalized ``upper-half'' and the generalized Egli-Milner order define lax extensions of the powerset functor to \(\REL\).
\end{example}
Lax extensions are deeply connected with monotone predicate liftings.
To realize this, it is convenient to think of the \(X\)-component of a \(\kappa\)-ary predicate lifting as a map of type
\(
\Rels{\V}(\kappa,X) \to \Rels{\V}(1,\ftF X)
\)
\cite{GoncharovEA22}.
\footnote{Note that Goncharov et. al. consider as their main point of view the dual of the one considered here \cite[Proposition~4.2]{GoncharovEA22}.
	Our choice prevents a harmless mismatch between the Kantorovich liftings and Kantorovich extensions in Theorem~\ref{p:53}.}
\begin{defn}
  \label{p:11}
  A \(\kappa\)-ary predicate lifting \(\lambda\) for a functor
  \(\ftF \colon \SET \to \SET\) is \df{induced} by a lax extension
  \(\eF \colon \Rels{\V}\! \to\! \Rels{\V}\) if there is a \(\V\)-relation
  \(\fr \colon 1\! \relto\! \ftF \kappa\)
  such that
   \(\lambda(f) = \eF f \cdot \fr \),
  for every \(\V\)-relation
  \(f \colon \kappa \relto X\).
\end{defn}

\begin{example}
	By interpreting a subset of a set \(X\) as a relation from \(1\) to \(X\), the unary predicate lifting \(\Diamond\) (see Example~\ref{p:6}) for the powerset functor \(\ftP \colon \SET \to \SET\) is induced by the lax extension of Example~\ref{p:3}; indeed, it is determined by the map \(1 \to \ftP 1\) that selects the set \(1\).
\end{example}
\begin{remark}
	\label{p:5}
	Every predicate lifting induced by a lax extension is monotone.
\end{remark}
Lax extensions have been instrumental in coalgebraic notions of \emph{behavioural distance} (e.g. \cite{Gavazzo18,WS20,WS21}), and the notion of Kantorovich extension has been crucial to connect such notions with coalgebraic modal logic \cite{CKP+11}.
\begin{defn}
	Let \(\ftF \colon \SET \to \SET\) be a functor, and \(\Lambda\) a \emph{class} of monotone predicate liftings for \(\ftF\).
	The \df{Kantorovich} lax extension of \(\ftF\) w.r.t. \(\Lambda\) is the lax extension
	\(
	\eF^\Lambda = \bigwedge_{\lambda \in \Lambda} \eF^\lambda,
	\)
	where for every \(\V\)-relation \(r \colon X \relto Y\), the \(\V\)-relation \(\eF^\lambda r \colon \ftF X \relto \ftF Y\) given by
	\(
	\label{p:44}
	\eF^\lambda r
	= \bigwedge_{g \colon \kappa \relto X} \lambda(r \cdot g) \multimapdotinv \lambda (g).
	\)
\end{defn}
\begin{example}
	\label{p:47}
	The Kantorovich extension of the powerset functor \(\ftP \colon \SET \to \SET\) to \(\REL\) w.r.t the \(\Diamond\) predicate lifting coincides with the extension given by the ``lower-half'' of the Egli-Milner order (Example~\ref{p:3}).
\end{example}
As suggested by the previous example, the Kantorovich extension leads to a representation theorem that plays an important role in Section~\ref{sec:lax-vs-lif}.
\begin{theorem}[\cite{GoncharovEA22}]
	\label{d:thm:1}
	Let \(\eF \colon \Rels{\V} \to \Rels{\V}\) be a lax extension, and let \(\Lambda\) be the class of all predicate liftings induced by \(\eF\).
	Then, \(\widehat \ftF = \eF^\Lambda\).
\end{theorem}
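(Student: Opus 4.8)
The plan is to prove the asserted equality \(\eF = \eF^\Lambda\) as two pointwise inequalities of \(\V\)-relations, namely \(\eF r \le \eF^\Lambda r\) and \(\eF^\Lambda r \le \eF r\), for an arbitrary \(\V\)-relation \(r \colon X \relto Y\). The first inequality will follow purely from lax functoriality, while the second is where the work lies and will be obtained by exhibiting, for each argument pair, a single suitable induced predicate lifting.

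For \(\eF r \le \eF^\Lambda r\): since \(\eF^\Lambda r = \bigwedge_{\lambda \in \Lambda} \eF^\lambda r\), it suffices to show \(\eF r \le \eF^\lambda r\) for every \(\lambda \in \Lambda\). Unfolding the definition of \(\eF^\lambda\) together with the universal property of the Kan extension \(\multimapdotinv\), this amounts to checking, for a representing relation \(\fr \colon 1 \relto \ftF\kappa\) of \(\lambda\) and each \(g \colon \kappa \relto X\), the inequality \(\eF r \cdot \lambda(g) \le \lambda(r \cdot g)\). Writing \(\lambda(g) = \eF g \cdot \fr\) and \(\lambda(r \cdot g) = \eF(r\cdot g) \cdot \fr\), this reduces to \(\eF r \cdot \eF g \cdot \fr \le \eF(r \cdot g) \cdot \fr\), which is immediate from lax functoriality~\ref{p:26}. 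This direction is routine.

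The substance is \(\eF^\Lambda r \le \eF r\), which I would prove entrywise: for each \((\fx, \fy) \in \ftF X \times \ftF Y\) I exhibit one induced predicate lifting whose Kantorovich extension already realizes the bound at \((\fx, \fy)\). Concretely, take \(\kappa = X\) and let \(\fr \colon 1 \relto \ftF X\) be the ``point'' at \(\fx\), i.e.\ \(\fr(\ast, \fx') = 1_{\ftF X}(\fx, \fx')\); this yields an induced predicate lifting \(\lambda\) with \(\lambda(f) = \eF f \cdot \fr\). I evaluate the defining meet of \(\eF^\lambda r\) at the single test relation \(g = 1_X\), so that \(r \cdot g = r\). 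Because \(\fr\) selects \(\fx\), one computes \(\lambda(1_X)(\ast, \fx) = \eF(1_X)(\fx,\fx)\) and \(\lambda(r)(\ast, \fy) = \eF r(\fx, \fy)\). Hence the \((\fx,\fy)\)-entry of \(\lambda(r) \multimapdotinv \lambda(1_X)\) is \(\hom\big(\eF(1_X)(\fx,\fx),\, \eF r(\fx,\fy)\big)\); since \(\eF(1_X)(\fx,\fx) \ge k\) by~\ref{p:0} and \(\hom\) is antitone in its first argument, this is at most \(\hom(k, \eF r(\fx,\fy)) = \eF r(\fx,\fy)\). Taking the meet over test relations and then over \(\Lambda\), I obtain \(\eF^\Lambda r(\fx,\fy) \le \eF^\lambda r(\fx,\fy) \le \eF r(\fx,\fy)\), as required.

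The main obstacle I anticipate is bookkeeping rather than conceptual. First, one must confirm that the ``point'' family \(\fr\) genuinely defines an induced predicate lifting in the sense of \autoref{p:11}; this rests on the standard compatibilities of a lax extension with functions and their converses (e.g.\ \(\eF(f^\circ \cdot s) = (\ftF f)^\circ \cdot \eF s\), derivable from~\ref{p:26} and~\ref{p:0}). Second, one must take care that the argument does not tacitly assume the lax extension is normalized: it is precisely for this reason that the key computation is routed through \(\hom\big(\eF(1_X)(\fx,\fx), -\big)\) and the antitonicity of \(\hom\), so that the bound \(\le \eF r(\fx,\fy)\) survives even when \(\eF(1_X)(\fx,\fx)\) strictly exceeds \(k\). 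Everything else is a direct unravelling of the definitions of the Kantorovich extension and of the Kan extension \(\multimapdotinv\).
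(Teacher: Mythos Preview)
The paper does not actually prove this theorem; it is quoted from \cite{GoncharovEA22} and used as a black box. Your argument is correct and is essentially the standard proof: the direction \(\eF r \le \eF^\Lambda r\) indeed falls out of \ref{p:26} as you describe, and for \(\eF^\Lambda r \le \eF r\) the key move---choosing, for a given \(\fx \in \ftF X\), the induced predicate lifting with \(\kappa = X\) and \(\fr\) the identity row at \(\fx\), then testing against \(g = 1_X\)---is exactly the idea underlying the cited result. Your computation of the Kan extension entry as \(\hom(\eF(1_X)(\fx,\fx), \eF r(\fx,\fy))\) and the subsequent use of antitonicity of \(\hom\) together with \ref{p:0} is sound and, as you note, avoids any normality assumption on \(\eF\).

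The only point worth firming up is your parenthetical remark that the ``point'' relation \(\fr\) yields a genuine predicate lifting. This is indeed routine bookkeeping: naturality of \(f \mapsto \eF f \cdot \fr\) in the \(\SET\)-variable reduces to the identity \(\eF(h \cdot f) = \ftF h \cdot \eF f\) for functions \(h\), which is the standard consequence of \ref{p:26} and \ref{p:0} you allude to. With that recorded, the proof is complete.
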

\section{Topological Liftings}
\label{sec:lif-vs-pl}

It is well-known that every lax extension \(\eF \colon \Rels{\V} \to \Rels{\V}\) of a functor \(\ftF \colon \SET \to \SET\) gives rise to a lifting (which we denote by the same symbol) of $\ftF$ to $\Cats{\V}$ (for instance, see \cite{Tho09}).
By definition, liftings are completely determined by their action on objects.
In particular, the \df{lifting induced by a lax extension} \(\eF \colon \Cats{\V}\to\Cats{\V}\) sends a \(\V\)-category \((X,a)\) to the \(\V\)-category \((\ftF X,\eF a)\).
Of course, it does not make sense to talk about functor liftings to the category \(\Cats{\V}\) when \(\V\) is trivial, hence we assume from now on that \emph{$\V$ is non-trivial}.

Predicate liftings also induce functor
liftings, via a simple construction available on all topological
categories that goes back, at least, to work in categorical duality theory
\cite{DT89,PT91}: To lift a functor
\(\ftG \colon\catA\to\catY\) along a topological functor
\(\ftII{-} \colon\catB\to\catY\), it is enough to give, for every
object \(A\) in \(\catA\), a structured cone
\begin{equation}
	\label{d:eq:4}
	\mathcal{C}(A)=(\ftG A\xrightarrow{h}\ftII{B})_{h,B}
\end{equation}
so that, for every \(h\) in \(\mathcal{C}(A)\) and every
\(f \colon A'\to A\), the composite \(h\cdot\ftG f\) belongs to the
cone \(\mathcal{C}(A')\).  Then, for an object~\(A\) in \(\catA\), one
defines \(\ftG^I A\) by equipping \(\ftG A\) with the initial
structure w.r.t. the structured cone \eqref{d:eq:4}.  It is easy to
see that the assignment \( X \mapsto \ftG^I X \) indeed defines a
functor \(\ftG^I \colon\catA\to\catB\) such that
\(\ftII{-}\cdot\ftG^{I}=\ftG\).  This technique has been previously
applied in the context of \emph{codensity liftings}
\cite{KKH+19,KKK+21,KDH21,Katsumata05} and \emph{Kantorovich liftings}
\cite{BBKK18}.  We apply this to our situation as follows. Given a
functor \(\ftF \colon \SET \to \SET\), take \(\ftG=\ftF\cdot \ftII{-}\); then a lifting of $\ftF$ to \(\Cats{\V}\) can be specified by a \emph{class} of natural transformations
\begin{equation}\label{eq:gen-pl}
	\lambda \colon \Cats{\V}(-,A_\lambda) \longrightarrow \SET(\ftF\ftII{-},\ftII{B_\lambda}),
\end{equation}
(which may be thought of as generalized predicate liftings, in that they lift $A_\lambda$-valued predicates to $B_\lambda$-valued ones).
Namely, given a \(\V\)-category \(X\), we consider the structured cone consisting of all maps
\[
	\lambda(f) \colon \ftF \ftII{X} \longrightarrow \ftII{B_\lambda}
\]
where \(\lambda\) ranges over the given natural transformations
and~$f$ over all \(\V\)-functors \(X \to A_\lambda\).  As described
above, we obtain a \(\V\)-category \(\ftF^I X\) by equipping
\(\ftF |X|\) with the initial structure w.r.t. this cone.  We call
functor liftings constructed in this way \df{topological}. Indeed, it
turns out that \emph{every} functor lifting is topological, even when
one restricts $B_\lambda$ in \eqref{eq:gen-pl} to be the $\V$-category
\((\V,\hom)\):
\begin{theorem}
	\label{p:54}
	Every lifting of a \(\SET\)-functor to \(\Cats{\V}\) is topological w.r.t. a \emph{class} of natural transformations
	\(
	\lambda \colon \Cats{\V}(-,A_\lambda) \longrightarrow \SET(\ftF\ftII{-},\ftII{\V}).
	\)
\end{theorem}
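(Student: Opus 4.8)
The plan is to read off the required family of generalized predicate liftings from \autoref{d:prop:2}, whose second clause states that for every \(\V\)-category \(Z\) the cone of all \(\V\)-functors \(Z \to \V\) is initial; applied to \(Z = \overline{\ftF}X\), this reconstructs the lifted structure from \(\V\)-valued maps and is precisely what will let me restrict the codomain in \eqref{eq:gen-pl} to \((\V,\hom)\). So I would fix a lifting \(\overline{\ftF}\) of \(\ftF\), write \(\overline{\ftF}(X,a) = (\ftF\ftII{X}, \overline{F}_X)\), and recall the Yoneda identification (the \(\SET\)-valued analogue of \autoref{p:43}): a natural transformation \(\lambda \colon \Cats{\V}(-, A) \to \SET(\ftF\ftII{-}, \ftII{\V})\) amounts to a single map \(\phi \colon \ftF\ftII{A} \to \V\), with \(\lambda(f) = \phi \cdot \ftF f\) for every \(\V\)-functor \(f \colon X \to A\). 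The cone-compatibility required by the topological-lifting construction then holds automatically, since \(\lambda(f) \cdot \ftF f' = \lambda(f \cdot f')\) by naturality.

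Next I would let \(\Lambda\) be the class containing, for every \(\V\)-category \(A\) and every \(\V\)-functor \(g \colon \overline{\ftF}A \to \V\), the natural transformation determined by \(A_\lambda = A\) and \(\phi = \ftII{g}\). This is a proper class, which the statement permits, and the associated class-indexed meets exist because \(\V\) is a complete lattice. Writing \(\ftF^I\) for the induced topological lifting, it suffices to prove \(\ftF^I = \overline{\ftF}\) on objects, since both are liftings of \(\ftF\) and hence agree on morphisms; that is, I must show \(\ftF^I X(\xi,\eta) = \overline{F}_X(\xi,\eta)\) for all \(\xi,\eta \in \ftF\ftII{X}\).

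For \(\overline{F}_X \le \ftF^I X\): given \(\lambda \in \Lambda\) arising from \(g \colon \overline{\ftF}A \to \V\) and any \(\V\)-functor \(f \colon X \to A\), the lifting condition \(\ftF f = \ftII{\overline{\ftF}f}\) gives \(\lambda(f) = \ftII{g} \cdot \ftF f = \ftII{g \cdot \overline{\ftF}f}\), the underlying map of a \(\V\)-functor \(\overline{\ftF}X \to \V\); hence \(\overline{F}_X(\xi,\eta) \le \hom(\lambda(f)(\xi), \lambda(f)(\eta))\), and taking the meet over the whole cone yields the inequality. For the converse, \autoref{d:prop:2} gives \(\overline{F}_X(\xi,\eta) = \bigwedge_{g \colon \overline{\ftF}X \to \V} \hom(g(\xi), g(\eta))\), and each such \(g\) already appears in the cone defining \(\ftF^I X\), namely as \(\lambda(\id_X)\) for the member of \(\Lambda\) built from \(A = X\) and \(g\); since the defining meet ranges over a possibly larger family, \(\ftF^I X(\xi,\eta) \le \overline{F}_X(\xi,\eta)\). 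Combining the two inequalities gives the claim.

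The crux — and the only step beyond bookkeeping — is the use of the second clause of \autoref{d:prop:2}: it is exactly this that makes \(\V\)-valued predicate liftings jointly sufficient to detect the lifted structure \(\overline{F}_X\), and hence what allows the restriction of \(B_\lambda\) to \((\V,\hom)\). The remaining work is the Yoneda repackaging of these \(\V\)-functors as natural transformations and the routine verification of the two inequalities above.
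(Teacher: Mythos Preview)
Your proof is correct and follows essentially the same route as the paper: the paper also takes~\(\Lambda\) to consist of all natural transformations determined, via Yoneda, by a pair \((A,g)\) with \(g \colon \ftbF A \to \V\) a \(\V\)-functor, and the key step in both arguments is exactly the invocation of the second clause of \autoref{d:prop:2} to see that the cone \((\ftII{h})_{h \in \Cats{\V}(\ftbF X,\V)}\) recovers the structure of \(\ftbF X\). The only presentational difference is that the paper packages your two inequalities into an auxiliary lemma (phrased in terms of a ``jointly epic'' condition on the cocone \((\lambda_X)_{\lambda}\)) and then observes that Yoneda guarantees this condition for the chosen~\(\Lambda\); your direct verification of the inequalities unwinds precisely that argument.
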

\noindent In examples, we usually construct a generalized predicate
lifting~\eqref{eq:gen-pl} from a~$\kappa$-ary predicate
lifting~$\lambda$ for the set functor~$\ftF$: Choose a pair \((A,B)\)
of \(\V\)-categories over the sets \(\V^\kappa\) and \(\V\),
respectively (the above theorem allows restricting to $B=\V$, and the
examples we present are of this kind). We can then precompose
$\lambda$ with the inclusion natural transformation
$\Cats{\V}(-,A) \longrightarrow \SET(\ftII{-},\ftII{A})$, obtaining a
natural transformation
\(\lambda^{(A,B)} \colon \Cats{\V}(-,A) \to
\SET(\ftF\ftII{-},\ftII{B})\) that applies \(\lambda\) to maps
underlying \(\V\)-functors with codomain \(A\).
\begin{example}
	\label{p:21}
	\begin{enumerate}[wide]
		\item The discrete lifting of the identity functor \(\ftId \colon \SET \to \SET\), which sends every \(\V\)-category to the discrete \(\V\)-category with the same underlying set, can be obtained as a topological lifting constructed from the identity \(\V\)-valued predicate lifting for \(\ftId\) by choosing \(A\) to be the \(\V\)-category consisting of the set \(\V\) equipped with the indiscrete structure.
		\item \label{p:23} The lifting of the identity functor \(\ftId \colon \SET \to \SET\) to \(\ORD\) that computes the smallest equivalence relation that contains a given preorder can be obtained as a topological lifting constructed from the \(\two\)-valued identity predicate lifting for \(\ftId\) by choosing \(A\) to be the discrete preordered set with two elements.
		\item It is well-known that the total variation distance between finite distributions \(\mu,\upsilon\) on a set \(X\) coincides with the Kantorovich distance on the discrete bounded-by-\(1\) metric space \(X\) (e.g. \cite{GS02});
		      that is, \(d^{TV}(\mu,\upsilon) = \bigvee_{f \colon X \to [0,1]} \mathbb{E}_X(f)(\upsilon) \ominus \mathbb{E}_X(f)(\mu)\) (see Example~\ref{p:6}(\ref{p:10})).
		      Therefore, the total variation distance defines a lifting of the finite distribution functor to \(\BMET\) that can be obtained as the topological lifting constructed from the predicate lifting \(\mathbb{E}\) by choosing \(A\) to be the indiscrete space \([0,1]\).
		      This example is closely related to the first one. Indeed, this lifting is the composite of the Kantorovich lifting of the finite distribution functor to \(\BMET\) (see Example~\ref{p:9}) and the discrete lifting of the identity functor to \(\BMET\). By Theorem~\ref{p:53} below,  precomposing functor liftings with the discrete lifting of the identity functor can be used to derive non-Kantorovich s.
	\end{enumerate}
\end{example}

\begin{remark}
	\label{p:12}
	Theorem~\ref{p:54} can be fine-tuned to show that the discrete lifting \(\ftF^d \colon \ORD \to \ORD\) of a finitary functor \(\ftF \colon \SET \to \SET\) is a topological lifting constructed from a set \(\Lambda\) of finitary \(2\)-valued predicate liftings for \(\ftF\).
	Hence, for every set \(X\), considered as a discrete preordered set, we have that the cone of all maps
	\(
	\lambda(f) \colon \ftF^d(X,1_X) \to 2,
	\)
	for \(\kappa\)-ary predicate liftings \(\lambda\in\Lambda\) and maps \(X \to 2^\kappa\), is initial.
	Thus, as \(\ftF^d(X,1_X)\) is antisymmetric, this cone is mono.
	In this sense, our results subsume the result that every finitary \(\SET\)-functor admits a separating set of finitary predicate liftings~\cite{Sch08}.
\end{remark}

\subsection{Kantorovich Liftings}

For our present purposes, we are primarily interested in topological
liftings induced by predicate liftings in the standard sense,
i.e.\ the natural transformations~\eqref{eq:gen-pl} are of
the shape
\( \lambda \colon \Cats{\V}(-,\V^\kappa) \longrightarrow
\SET(\ftF\ftII{-},\ftII{\V}) \), and thus employ~$\V$, equipped with
its standard $\V$-category structure, as the object of truth values
throughout. In particular, this format is needed to use predicate
liftings as modalities in existing frameworks for quantitative
coalgebraic logic (Section~\ref{sec:log}). Many functor liftings
considered in work on coalgebraic behavioural distance can be
understood as topological liftings constructed in this way
(e.g. \cite{BBKK18,KKK+21,WS20,WS21,ForsterEA23}).
To simplify notation, in the sequel we often omit the forgetful functor to \(\SET\).
\begin{defn}
	\label{p:49}
	Let \(\ftF \colon \SET \to \SET\) be a functor and \(\Lambda\) a class of \(\V\)-valued predicate liftings for \(\ftF\).
	The \df{Kantorovich lifting} of \(\ftF\) w.r.t. \(\Lambda\) is the topological lifting \(\ftF^\Lambda \colon \Cats{\V} \to \Cats{\V}\) that sends a \(\V\)-category \(X\) to the \(\V\)-category \((\ftF X, \ftF^\Lambda a)\), where \(\ftF^\Lambda a\) denotes the initial structure on \(\ftF X\) w.r.t. the structured cone of all functions
	\[
		\lambda(f) \colon \ftF \ftII{X} \longrightarrow \ftII{\V}
	\]
	where \(\lambda\in\Lambda\) is \(\kappa\)-ary and \(f \colon (X,a) \to \V^\kappa\) is a \(\V\)-functor.
	Generally, a lifting \(\ftbF \colon \Cats{\V} \to \Cats{\V}\) of $\ftF$ is  \df{Kantorovich} if $\ftbF=\ftF^\Lambda$ some class~$\Lambda$ of predicate liftings for \(\ftF\).
\end{defn}
\begin{example}
	\label{p:9}
	As the name suggests, the prototypical example of a Kantorovich lifting is given by the (non-symmetric) Kantorovich distance between finite distributions, which arises as the Kantorovich lifting of the finite distribution functor on \(\SET\) to the category \(\BMET\) w.r.t the predicate lifting \(\mathbb{E}\) that computes expected values, i.e. \(\ftD_\omega^\mathbb{E}(X,a)(\mu,\upsilon) = \bigvee_{f \colon (X,a) \to [0,1]} \mathbb{E}_X(f)(\upsilon) \ominus \mathbb{E}_X(f)(\mu)\).
\end{example}

We go on to exploit the universal property of initial lifts of cones to characterize the liftings that are Kantorovich.
In the following, fix a functor \(\ftF \colon \SET \to \SET\) and a quantale \(\V\). Consider the partially ordered conglomerate \(\PredF\) of \emph{classes} of \(\V\)-valued predicate liftings for \(\ftF\) ordered by containment, i.e.
\(
\Lambda \leq\Lambda' \iff\Lambda \supseteq \Lambda';
\)
and the partially ordered class \(\LiftF\) of liftings of \(\ftF\) to \(\Cats{\V}\) ordered pointwise, i.e.
\(
\ftbF \leq \ftbF'
\iff \ftbF a \leq \ftbF' a,
\)
for every \(\V\)-category \((X,a)\).
\begin{defn}
	Let \(\ftbF \colon \Cats{\V} \to \Cats{\V}\) be a lifting of \(\ftF\).
	A \(\kappa\)-ary \(\V\)-valued \df{predicate lifting} \(\lambda\) for \(\ftF\) is \df{compatible with} \(\ftbF\) if it restricts to a predicate lifting for \(\ftbF\):
	\begin{displaymath}
		\begin{tikzcd}[row sep=2ex]
			\Cats{\V}(-,\V^\kappa) & \Cats{\V}(\ftbF -,\V) \\
			\SET(-,\ftII{\V^\kappa})& \SET(\ftF\ftII{-}, \ftII{\V})
			\ar[from=1-1, to=1-2, "\lambda", dotted]
			\ar[from=1-1, to=2-1, ]
			\ar[from=1-1, to=2-2, "=", phantom]
			\ar[from=1-2, to=2-2, ]
			\ar[from=2-1, to=2-2, "\lambda"']
		\end{tikzcd}
	\end{displaymath}
	where the vertical arrows denote set inclusions -- that is, if $\lambda$ lifts $\V$-functorial predicates on~$X$ to $\V$-functorial predicates on $\ftbF X$.
	The class of all predicate liftings compatible with \(\ftbF\) is denoted by \(\ftP(\ftbF)\).
\end{defn}
\begin{proposition}
	\label{p:2}
	A \(\kappa\)-ary \(\V\)-valued predicate lifting \(\lambda\) for \(\ftF\) is compatible with~\(\ftbF\) iff the map \(\lambda(1_{\ftII{\V^\kappa}}) \colon \ftF (\ftII{\V^\kappa}) \to \ftII{\V}\) is a \(\V\)-functor of type \(\ftbF \V^\kappa \to \V\).
\end{proposition}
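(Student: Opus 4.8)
The plan is to reduce the compatibility condition -- which a priori quantifies over \emph{all} $\V$-categories $(X,a)$ and \emph{all} $\V$-functors $f \colon (X,a) \to \V^\kappa$ -- to a single condition on one map, by invoking the Yoneda lemma in the spirit of \autoref{p:43}. The underlying data of $\lambda$ is a natural transformation $\SET(-,\ftII{\V^\kappa}) \to \SET(\ftF\ftII{-}, \ftII{\V})$ between contravariant $\SET$-valued functors, so naturality applied to an arbitrary set map $f \colon \ftII{X} \to \ftII{\V^\kappa}$ and the identity $1_{\ftII{\V^\kappa}}$ yields the reconstruction formula
\[
	\lambda(f) = \lambda_0 \cdot \ftF f, \qquad \lambda_0 := \lambda(1_{\ftII{\V^\kappa}}) \colon \ftF\ftII{\V^\kappa} \to \ftII{\V}.
\]
Establishing this identity first is the technical heart shared by both implications: it shows that $\lambda$ is entirely encoded by the single map $\lambda_0$.

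For the forward implication (\emph{compatible} $\Rightarrow$ $\lambda_0$ is a $\V$-functor $\ftbF\V^\kappa \to \V$), the plan is simply to instantiate compatibility at the test object $X = \V^\kappa$ and the test map $f = \id_{\V^\kappa}$, which is a bona fide $\V$-functor whose underlying set map is $1_{\ftII{\V^\kappa}}$. Compatibility then asserts precisely that $\lambda(1_{\ftII{\V^\kappa}}) = \lambda_0$ lifts to a $\V$-functor $\ftbF\V^\kappa \to \V$, which is the desired conclusion. This direction is immediate once the identity is recognised as a legitimate instance.

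For the converse, assume $\lambda_0 \colon \ftbF\V^\kappa \to \V$ is a $\V$-functor and let $f \colon (X,a) \to \V^\kappa$ be any $\V$-functor. The key point is that $\ftbF$ is a \emph{lifting}: hence $\ftbF f \colon \ftbF X \to \ftbF\V^\kappa$ is again a $\V$-functor, and its underlying set map is exactly $\ftF f$ (since $\ftII{-}\cdot\ftbF = \ftF\cdot\ftII{-}$). Consequently $\lambda(f) = \lambda_0 \cdot \ftF f$ is the underlying map of the composite $\lambda_0 \cdot \ftbF f \colon \ftbF X \to \ftbF\V^\kappa \to \V$, and composites of $\V$-functors are $\V$-functors; thus $\lambda(f)$ lifts to a $\V$-functor $\ftbF X \to \V$, which is exactly what compatibility requires.

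I do not anticipate a genuine obstacle here: the argument is essentially an application of Yoneda together with functoriality of $\ftbF$. The one thing to be careful about is the bookkeeping between two layers -- the set level, where $\lambda$ literally lives and where the reconstruction formula is applied, versus the $\V$-enriched level, where functoriality of $\ftbF$ and closure of $\V$-functors under composition do the work. The lifting identity $\ftII{\ftbF f} = \ftF f$ is precisely what glues the two layers together and makes the converse go through.
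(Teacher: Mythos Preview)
Your proof is correct and follows exactly the approach the paper intends: the paper does not spell out a proof of this proposition, but \autoref{p:43} records the Yoneda reconstruction $\lambda(f)=\lambda(1_{\ftII{\V^\kappa}})\cdot\ftF f$ that you use, and the same factorisation (together with the lifting identity $\ftII{\ftbF f}=\ftF f$) is deployed verbatim in the appendix proof of \autoref{p:51}. There is nothing to add.
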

The Kantorovich lifting defines a universal construction:
\begin{theorem}
	\label{p:51}
	Let \(\ftF \colon \SET \to \SET\) be a functor.
	Assigning to a class of predicate liftings for \(\ftF\) the corresponding Kantorovich lifting yields a right adjoint \(\ftF^{(-)} \colon \PredF \to \LiftF\) whose left
	adjoint \(\ftP \colon \LiftF \to \PredF\) maps a lifting of\/ \(\ftF\) to the class \(\ftP(\ftbF)\) of all \(\V\)-valued predicate liftings for \(\ftF\) that are compatible with the lifting.
\end{theorem}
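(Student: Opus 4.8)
The plan is to recognize the claimed adjunction as a Galois connection between the ordered classes \(\PredF\) and \(\LiftF\) and to verify its single defining biconditional. Since both sides are (large) posets, exhibiting \(\ftP\) as left adjoint to \(\ftF^{(-)}\) amounts to proving, for every class \(\Lambda\) of predicate liftings for \(\ftF\) and every lifting \(\ftbF\), the equivalence
\[
	\ftP(\ftbF) \le \Lambda \quad\Longleftrightarrow\quad \ftbF \le \ftF^\Lambda .
\]
Before this I would record well-definedness: each \(\ftF^\Lambda\) is a genuine lifting by the topological-lifting construction of Section~\ref{sec:lif-vs-pl} (naturality of every \(\lambda\in\Lambda\) guarantees the cone condition, so the initial structures assemble into a functor with \(\ftII{-}\cdot\ftF^\Lambda = \ftF\cdot\ftII{-}\)), while \(\ftP(\ftbF)\) is a class of predicate liftings by definition. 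Unpacking the two orders, the left-hand side reads \(\Lambda\subseteq\ftP(\ftbF)\), i.e.\ every \(\lambda\in\Lambda\) is compatible with \(\ftbF\), whereas the right-hand side reads \(\ftbF a \le \ftF^\Lambda a\) for every \(\V\)-category \((X,a)\).

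The crux is the universal property of initial lifts of cones. Fixing a \(\V\)-category \((X,a)\), and recalling that \(\ftF^\Lambda a\) is by definition the initial structure on \(\ftF\ftII{X}\) with respect to the cone of all maps \(\lambda(f)\colon\ftF\ftII{X}\to\ftII{\V}\) with \(\lambda\in\Lambda\) \(\kappa\)-ary and \(f\colon (X,a)\to\V^\kappa\) a \(\V\)-functor, the inequality \(\ftbF a \le \ftF^\Lambda a\) is equivalent to the identity map being a \(\V\)-functor \((\ftF\ftII{X},\ftbF a)\to(\ftF\ftII{X},\ftF^\Lambda a)\); by initiality this holds iff every composite \(\lambda(f)\colon(\ftF\ftII{X},\ftbF a)\to\V\) is a \(\V\)-functor. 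Quantifying over all \((X,a)\), the condition ``\(\lambda(f)\) is a \(\V\)-functor \(\ftbF(X,a)\to\V\) for every \(f\) and every \((X,a)\)'' is precisely compatibility of \(\lambda\) with \(\ftbF\). Hence \(\ftbF\le\ftF^\Lambda\) iff every \(\lambda\in\Lambda\) is compatible with \(\ftbF\), i.e.\ iff \(\Lambda\subseteq\ftP(\ftbF)\), i.e.\ iff \(\ftP(\ftbF)\le\Lambda\), which establishes the biconditional and therefore the adjunction.

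The biconditional simultaneously proves that \(\ftF^{(-)}\) and \(\ftP\) are monotone and identifies \(\ftP\) as the left adjoint, so no separate verification of these facts is needed. The main point requiring care is the bookkeeping of the two dual orders --- reverse containment on \(\PredF\) against the pointwise order on \(\LiftF\) --- together with the observation that the universal quantifier over cone maps \(\lambda(f)\) in the definition of compatibility is exactly what the infimum defining the initial structure \(\ftF^\Lambda a\) encodes. As a streamlining, one may replace the quantification over all \(f\) and all \((X,a)\) by the single generic test of Proposition~\ref{p:2}, reducing compatibility of \(\lambda\) to non-expansiveness of \(\lambda(1_{\ftII{\V^\kappa}})\colon\ftbF\V^\kappa\to\V\); either route closes the argument.
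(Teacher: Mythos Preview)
Your proposal is correct and takes essentially the same approach as the paper: both verify the Galois-connection biconditional \(\ftP(\ftbF)\supseteq\Lambda\iff\ftbF\le\ftF^\Lambda\) by unwinding the definition of the initial structure \(\ftF^\Lambda a\). The only cosmetic difference is that the paper routes each direction through the generic component \(\V^\kappa\) via Proposition~\ref{p:2} (factoring \(\lambda(f)=\lambda(1_{\V^\kappa})\cdot\ftbF f\)), whereas you invoke the universal property of initial lifts directly and mention the Proposition~\ref{p:2} reduction only as an optional streamlining; either way the argument is the same.
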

The following result shows that Kantorovich liftings are characterized by a pleasant property that is required in multiple results in the context of coalgebraic approaches to \emph{behavioural distance} (e.g. \cite{BBKK18,KKK+21,ForsterEA23,Wor00}).
\begin{theorem}
	\label{p:53}
	A lifting of a \(\SET\)-functor to \(\Cats{\V}\) is Kantorovich iff it preserves initial morphisms.
\end{theorem}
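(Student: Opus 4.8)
The plan is to treat the two implications separately, relying throughout on the two properties of $\V=(\V,\hom)$ recorded in \autoref{d:prop:2}: that $\V$ is injective with respect to initial morphisms, and that for any $\V$-category the cone of all $\V$-functors into $\V$ is initial. It is also convenient to invoke the adjunction $\ftP \dashv \ftF^{(-)}$ of \autoref{p:51}: since the image of a right adjoint consists exactly of the fixed points of $\ftF^{(-)}\cdot\ftP$, a lifting $\ftbF$ is Kantorovich precisely when $\ftbF = \ftF^{\ftP(\ftbF)}$, and the unit already supplies the inequality $\ftbF \le \ftF^{\ftP(\ftbF)}$ for free (each compatible $\lambda$ makes $\lambda(g)\colon\ftbF X\to\V$ a $\V$-functor, so $\ftbF a$ lies below the initial structure cut out by the $\lambda(g)$). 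Thus in the converse only the reverse inequality remains to be shown.

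For the forward implication, suppose $\ftbF=\ftF^\Lambda$ is Kantorovich and $f\colon(X,a)\to(Y,b)$ is initial; I want $\ftF f$ to be initial. Unfolding \autoref{p:49}, the value $\ftF^\Lambda b(\ftF f\,\xi,\ftF f\,\eta)$ is an infimum of terms $\hom(\lambda(h)(\ftF f\,\xi),\lambda(h)(\ftF f\,\eta))$ over $\lambda\in\Lambda$ and $\V$-functors $h\colon(Y,b)\to\V^\kappa$. Naturality of $\lambda$ rewrites $\lambda(h)\cdot\ftF f$ as $\lambda(h\cdot f)$, so the infimum ranges over the maps $h\cdot f$. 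Because $f$ is initial and $\V^\kappa$ is injective with respect to initial morphisms (being a power of the injective $\V$), every $\V$-functor $g\colon(X,a)\to\V^\kappa$ factors as $g=h\cdot f$; hence the $h\cdot f$ exhaust all $g$, the two infima agree, and $\ftF f$ is initial.

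For the converse, assume $\ftbF$ preserves initial morphisms and fix $(X,a)$; I must prove $\ftF^{\ftP(\ftbF)}a \le \ftbF a$. By \autoref{d:prop:2} the cone $(\phi\colon\ftbF(X,a)\to\V)_\phi$ is initial, so $\ftbF a$ is the infimum of the $\hom(\phi(\xi),\phi(\eta))$; it therefore suffices to realize each such $\phi$ as $\lambda(g)$ for a compatible $\lambda$ and a $\V$-functor $g\colon(X,a)\to\V^\kappa$. I would take $\kappa$ to index the cone of all $\V$-functors $X\to\V$ and let $g=\langle h\rangle_h$ be their tupling; by the second clause of \autoref{d:prop:2} this $g$ is initial. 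Preservation of initial morphisms then makes $\ftbF g\colon\ftbF(X,a)\to\ftbF\V^\kappa$ initial, and injectivity of $\V$ extends $\phi$ along $\ftbF g$ to a $\V$-functor $\ell\colon\ftbF\V^\kappa\to\V$ with $\ell\cdot\ftF g=\phi$. By \autoref{p:2}, such an $\ell$ is exactly the datum $\lambda(1_{\V^\kappa})$ of a compatible predicate lifting $\lambda$, and then $\lambda(g)=\ell\cdot\ftF g=\phi$, as required.

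The main obstacle is this realization step in the converse: one must manufacture, out of the purely structural hypothesis of preserving initial morphisms, \emph{enough} compatible predicate liftings to recover the full distance $\ftbF a$. The crucial insight is that the generic tupling $g$ into a power of $\V$ is itself an initial morphism, so preservation of initiality transports this along $\ftbF$ and lets injectivity of $\V$ turn an arbitrary $\V$-functor $\phi$ on $\ftbF(X,a)$ into a \emph{genuine} $\V$-functor $\ell$ on $\ftbF\V^\kappa$ — it is precisely here, in guaranteeing that the extension $\ell$ satisfies the $\V$-functoriality demanded by \autoref{p:2} rather than being a mere function, that preservation of initial morphisms is indispensable.
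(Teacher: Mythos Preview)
Your proof is correct and follows essentially the same strategy as the paper's. The forward implication is argued identically (injectivity of $\V^\kappa$ lets every test $g\colon(X,a)\to\V^\kappa$ extend across the initial $f$, so the two initial structures agree), and the converse likewise hinges on embedding $(X,a)$ initially into a power of $\V$ and then reading off compatible predicate liftings via \autoref{p:2}.

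The only noteworthy variation is in the choice of that initial embedding: you tuple \emph{all} $\V$-functors $X\to\V$, appealing directly to \autoref{d:prop:2}, whereas the paper uses the Yoneda embedding $(X,a)\hookrightarrow\V^{|X|}$, which keeps the arity bounded by $|X|$. Correspondingly, the paper phrases the final step as ``composition of initial cones is initial'' rather than your explicit extension of each $\phi$ via injectivity of $\V$; these are two sides of the same coin. Your version is perhaps more self-contained (it cites only \autoref{d:prop:2} rather than the enriched Yoneda lemma), while the paper's gives a sharper arity bound, which matters later when one wants finitary predicate liftings for $\omega$-bounded functors (\autoref{p:16}).
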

\begin{corollary}
	\label{p:201}
	Every topological lifting of a functor \(\ftF \colon \SET \to \SET\) w.r.t. a class of natural transformations \(\lambda \colon \Cats{\V}(-,A_\lambda) \to \SET(\ftF -, \ftII{B_\lambda})\) where each \(A_\lambda\) is injective in \(\Cats{\V}\) w.r.t. initial morphisms is Kantorovich.
\end{corollary}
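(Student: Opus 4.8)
The plan is to reduce everything to Theorem~\ref{p:53}: a lifting is Kantorovich precisely when it preserves initial morphisms, so it suffices to show that the given topological lifting, which I write $\ftbF$, preserves initial morphisms. Thus let $g \colon (X,a) \to (Y,b)$ be an initial $\V$-functor, i.e.\ $a(x,x') = b(g(x),g(x'))$ for all $x,x'$, and the goal is to show that $\ftF g \colon \ftbF X \to \ftbF Y$ is again initial. First I would record the two relevant structures explicitly. By construction of the topological lifting, $\ftbF X$ carries the initial structure w.r.t.\ the cone of all $\lambda(f) \colon \ftF X \to \ftII{B_\lambda}$ with $\lambda$ ranging over the given natural transformations and $f \colon X \to A_\lambda$ over $\V$-functors; writing $b_\lambda$ for the structure of $B_\lambda$, this reads
\[
	\ftbF a(\fx,\fx') = \bigwedge_{\lambda,\, f \colon X \to A_\lambda} b_\lambda\bigl(\lambda(f)(\fx),\lambda(f)(\fx')\bigr),
\]
and analogously for $\ftbF b$ with $h$ ranging over $\V$-functors $Y \to A_\lambda$. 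Since $\ftbF$ is a functor, $\ftF g \colon \ftbF X \to \ftbF Y$ is a $\V$-functor, so the inequality $\ftbF a(\fx,\fx') \le \ftbF b(\ftF g(\fx),\ftF g(\fx'))$ holds automatically, and the whole task reduces to proving the reverse inequality.

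The heart of the argument — and the only place where the hypotheses are used — is as follows. Fix $\lambda$ and a $\V$-functor $f \colon X \to A_\lambda$. Because $g$ is an initial morphism and $A_\lambda$ is injective w.r.t.\ initial morphisms, $f$ factors as $f = h \cdot g$ for some $\V$-functor $h \colon Y \to A_\lambda$. Naturality of $\lambda$ at $g$ (for the contravariant hom-functors $\Cats{\V}(-,A_\lambda)$ and $\SET(\ftF-,\ftII{B_\lambda})$) then gives $\lambda(f) = \lambda(h \cdot g) = \lambda(h) \cdot \ftF g$, whence $\lambda(f)(\fx) = \lambda(h)(\ftF g(\fx))$ and likewise for $\fx'$. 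Consequently $b_\lambda(\lambda(f)(\fx),\lambda(f)(\fx'))$ equals the term $b_\lambda(\lambda(h)(\ftF g(\fx)),\lambda(h)(\ftF g(\fx')))$, which occurs in the meet defining $\ftbF b(\ftF g(\fx),\ftF g(\fx'))$ since $h \colon Y \to A_\lambda$; hence $\ftbF b(\ftF g(\fx),\ftF g(\fx')) \le b_\lambda(\lambda(f)(\fx),\lambda(f)(\fx'))$.

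Finally I would take the meet over all $\lambda$ and all $f \colon X \to A_\lambda$ on the right. As the bound above is uniform in the pair $(\lambda,f)$, this yields $\ftbF b(\ftF g(\fx),\ftF g(\fx')) \le \ftbF a(\fx,\fx')$, the required reverse inequality. Together with the free inequality this gives equality, so $\ftF g$ is initial; thus $\ftbF$ preserves initial morphisms and is Kantorovich by Theorem~\ref{p:53}. The genuinely delicate point is the middle paragraph: one must pair the injective factorization $f = h \cdot g$ with the correct naturality square so that $\lambda(f)$ is rewritten as a map that legitimately contributes to the cone defining the structure on $\ftbF Y$. Everything else is routine manipulation of initial structures as meets, and in particular the fact that the specific value of $\ftII{B_\lambda}$ plays no role — consistent with Theorem~\ref{p:54}, which lets one take $B_\lambda = \V$ throughout.
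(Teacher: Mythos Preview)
Your proof is correct and follows essentially the same approach as the paper: the corollary is meant as an immediate consequence of Theorem~\ref{p:53}, and the argument that the topological lifting preserves initial morphisms is the obvious adaptation of the ``only if'' direction of that theorem's proof, replacing the injectivity of $\V^\kappa$ by the assumed injectivity of each $A_\lambda$ and using naturality exactly as you do. Your presentation via explicit meets is a straightforward unfolding of the universal property of initial structures used in the paper's proof of Theorem~\ref{p:53}.
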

\begin{corollary}
	\label{p:210}
	The composite of Kantorovich liftings is Kantorovich.
\end{corollary}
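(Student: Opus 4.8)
The plan is to avoid manipulating predicate liftings directly and instead to route everything through the intrinsic characterization of \autoref{p:53}: a lifting of a \(\SET\)-functor to \(\Cats{\V}\) is Kantorovich if and only if it preserves initial morphisms. This turns the statement into a closure property of a class of functors under composition, which is essentially automatic.

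First I would check that the composite of two liftings is again a lifting, now of the composite \(\SET\)-functor. Suppose \(\ftbF \colon \Cats{\V} \to \Cats{\V}\) is a lifting of \(\ftF \colon \SET \to \SET\) and \(\functorfont{\overline{G}} \colon \Cats{\V} \to \Cats{\V}\) is a lifting of \(\ftG \colon \SET \to \SET\), so that \(\ftII{-} \cdot \ftbF = \ftF \cdot \ftII{-}\) and \(\ftII{-} \cdot \functorfont{\overline{G}} = \ftG \cdot \ftII{-}\). Then
\[
  \ftII{-} \cdot \functorfont{\overline{G}} \cdot \ftbF = \ftG \cdot \ftII{-} \cdot \ftbF = \ftG \cdot \ftF \cdot \ftII{-},
\]
so \(\functorfont{\overline{G}} \cdot \ftbF\) is a lifting of the \(\SET\)-functor \(\ftG \cdot \ftF\). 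In particular the characterization of \autoref{p:53}, which is stated for liftings of an arbitrary \(\SET\)-functor, applies verbatim to this composite.

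Next I would use that characterization in both directions. By assumption \(\ftbF\) and \(\functorfont{\overline{G}}\) are Kantorovich, so by \autoref{p:53} each preserves initial morphisms. Preservation of initial morphisms is patently closed under composition of functors: if \(m\) is an initial morphism then \(\ftbF m\) is initial, whence \(\functorfont{\overline{G}}(\ftbF m) = (\functorfont{\overline{G}} \cdot \ftbF)(m)\) is again initial. Thus \(\functorfont{\overline{G}} \cdot \ftbF\) preserves initial morphisms, and invoking \autoref{p:53} once more -- now in the direction that deduces the Kantorovich property from preservation of initial morphisms -- yields that \(\functorfont{\overline{G}} \cdot \ftbF\) is Kantorovich.

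The argument has no genuine obstacle once \autoref{p:53} is in hand; the only point worth flagging is that the two liftings need not lift the same set functor, so it is important that \autoref{p:53} characterizes Kantorovich liftings of \emph{any} \(\SET\)-functor rather than being tied to a fixed one. A direct proof, constructing an explicit class of predicate liftings for \(\ftG \cdot \ftF\) that induces the composite lifting, would be considerably more laborious, which is exactly what the characterization lets us sidestep.
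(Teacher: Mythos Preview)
Your proof is correct and matches the paper's intended approach: the corollary is stated immediately after \autoref{p:53} with no explicit proof, precisely because it follows at once from that characterization together with the evident facts that liftings compose and preservation of initial morphisms is closed under composition.
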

\begin{example}
	\label{p:103}
	The characterization of Theorem~\ref{p:53} makes it easy to distinguish Kantorovich liftings.
	\begin{enumerate}[wide]
		\item \label{p:120} It is an elementary fact that every lifting induced by a lax extension preserves initial morphisms (e.g.~\cite[Proposition~2.16]{HN20}).
		      In particular, the Wasserstein lifting \cite{BBKK18} is Kantorovich.
		\item \label{p:104} 
		      The identity functor on \(\SET\)  has a lifting
		      \(
		      (-)^\circ \colon \Cats{\V} \to \Cats{\V}
		      \)
		      that sends every \(\V\)-category to its dual.
		      Clearly, this lifting preserves initial morphisms, and hence it is Kantorovich. Indeed, one can show that it is the Kantorovich lifting of the identity functor w.r.t. the set of \(\V\)-valued predicate liftings determined by the representable \(\V\)-functors \(\V^\op \to \V\).
		\item \label{p:13} The functor \((-)_s \colon \Cats{\V} \to \Cats{\V}_\sym\) that symmetrizes \(\V\)-categories gives rise to a lifting \((-)_s \colon \Cats{\V} \to \Cats{\V}\) of the identity functor on \(\SET\).
		      Clearly, this functor preserves initial morphisms, and hence it is Kantorovich. Indeed, one can show that it is the Kantorovich lifting of the identity functor w.r.t. the set of all \(\V\)-valued predicate liftings determined by the representable \(\V\)-functors \(\V_s \to \V\).
		\item The discrete lifting of the identity functor on \(\SET\) to \(\Cats{\V}\) is \emph{not} Kantorovich, as it fails to preserve initial morphisms.
		\item The lifting of the identity functor on \(\SET\) to \(\Cats{\V}\) that sends a \(\V\)-category \((X,a)\) to the \(\V\)-category given by the final structure w.r.t. the structured cospan of identity maps \(\ftII{(X,a)} \rightarrow X \leftarrow \ftII{(X,a^\circ)}\) is \emph{not} Kantorovich.
		      This lifting generalizes Example~\ref{p:21}(\ref{p:23}).
		\item The lifting of the finite distribution functor on \(\SET\) to \(\BMET\) given by the Kantorovich distance is Kantorovich, while the lifting given by the total variation distance is \emph{not} Kantorovich.
	\end{enumerate}
\end{example}
\subsection{Liftings Induced by Lax Extensions}
\label{sec:lax-vs-lif}
We  show next that lax extensions, functor liftings, and predicate liftings are linked by adjunctions, and characterize the liftings induced by lax extensions.
We begin by showing that the Kantorovich extension and the Kantorovich lifting are compatible.
\begin{theorem}
	\label{p:57}
	Let \(\eF \colon \Cats{\V} \to \Cats{\V}\) be a lifting of a functor \(\ftF \colon \SET \to \SET\)
	induced by a lax extension \(\eF \colon \Rels{\V} \to \Rels{\V}\).
	If \(\eF \colon \Rels{\V} \to \Rels{\V}\) is the Kantorovich extension w.r.t. a \emph{class} \(\Lambda\) of predicate liftings, then the functor \(\eF \colon \Cats{\V} \to \Cats{\V}\) is the Kantorovich lifting of \(\ftF \colon \SET \to \SET\) w.r.t. \(\Lambda\).
\end{theorem}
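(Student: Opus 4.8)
The plan is to verify that the two liftings agree on objects, since (as noted above) a lifting of a \(\SET\)-functor is completely determined by its action on objects, and both liftings in question cover the same \(\ftF\). Fix a \(\V\)-category \((X,a)\) and elements \(\fx,\fy\in\ftF X\). Unfolding the lifting induced by the lax extension, the structure assigned to \((X,a)\) is \((\ftF X,\eF a)\), with \(a\) read as a \(\V\)-relation \(a\colon X\relto X\); and since \(\eF=\eF^\Lambda=\bigwedge_{\lambda\in\Lambda}\eF^\lambda\),
\[
(\eF a)(\fx,\fy)=\bigwedge_{\lambda\in\Lambda}\,\bigwedge_{g\colon\kappa\relto X}\bigl(\lambda(a\cdot g)\multimapdotinv\lambda(g)\bigr)(\fx,\fy),
\]
the inner meet ranging over \emph{all} \(\V\)-relations \(g\). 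On the other side, unfolding \autoref{p:49}, the Kantorovich lifting assigns to \((X,a)\) the initial structure \((\ftF^\Lambda a)(\fx,\fy)=\bigwedge_{\lambda\in\Lambda}\bigwedge_{f}\hom\bigl(\lambda(f)(\fx),\lambda(f)(\fy)\bigr)\), where now \(f\) ranges only over the \(\V\)-functors \((X,a)\to\V^\kappa\).

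The first key step is to make the extension term explicit. Both \(\lambda(g)\) and \(\lambda(a\cdot g)\) are \(\V\)-relations \(1\relto\ftF X\), i.e.\ maps \(\ftF X\to\V\), so the defining property \(t\cdot\lambda(g)\le\lambda(a\cdot g)\iff t\le\lambda(a\cdot g)\multimapdotinv\lambda(g)\) unwinds, via the adjunction \(u\otimes-\dashv\hom(u,-)\), to
\[
\bigl(\lambda(a\cdot g)\multimapdotinv\lambda(g)\bigr)(\fx,\fy)=\hom\bigl(\lambda(g)(\fx),\lambda(a\cdot g)(\fy)\bigr).
\]
The second step is to identify the two index sets. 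Using the relational reading of predicate liftings recalled in the preliminaries (the map \(\Rels{\V}(\kappa,X)\to\Rels{\V}(1,\ftF X)\)), a map \(f\colon X\to\V^\kappa\) corresponds to a \(\V\)-relation \(g_f\colon\kappa\relto X\) with \(\lambda(f)=\lambda(g_f)\), and \(f\) is a \(\V\)-functor \((X,a)\to\V^\kappa\) exactly when \(a\cdot g_f=g_f\): reflexivity of \(a\) forces \(g_f\le a\cdot g_f\) always, so the constraint is the reverse inequality, which is precisely the condition for \(f\) to be a \(\V\)-functor. Hence the Kantorovich-lifting structure equals \(\bigwedge_\lambda\bigwedge_{a\cdot g=g}\hom(\lambda(g)(\fx),\lambda(g)(\fy))\).

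It remains to show, for each fixed \(\lambda\), that
\[
\bigwedge_{g\colon\kappa\relto X}\hom\bigl(\lambda(g)(\fx),\lambda(a\cdot g)(\fy)\bigr)=\bigwedge_{g\colon a\cdot g=g}\hom\bigl(\lambda(g)(\fx),\lambda(g)(\fy)\bigr);
\]
call these \(L\) and \(R\). The inequality \(L\le R\) is immediate: for a fixed point \(g\) the \(L\)-summand literally equals the \(R\)-summand, and \(L\) is a meet over a larger index set. For \(R\le L\), I would, given an arbitrary \(g\), test \(R\) against the fixed point \(a\cdot g\), which is a fixed point because \(a\cdot a=a\) (transitivity and reflexivity of \(a\)); this gives \(R\le\hom(\lambda(a\cdot g)(\fx),\lambda(a\cdot g)(\fy))\). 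The crux is to replace \(\lambda(a\cdot g)(\fx)\) by \(\lambda(g)(\fx)\) in the first argument: since \(g\le a\cdot g\) and the members of \(\Lambda\) are monotone predicate liftings (this is built into the definition of the Kantorovich extension, cf.\ \autoref{p:5}), we have \(\lambda(g)\le\lambda(a\cdot g)\); as \(\hom(-,w)\) is antitone in its first argument, \(\hom(\lambda(a\cdot g)(\fx),w)\le\hom(\lambda(g)(\fx),w)\), and taking \(w=\lambda(a\cdot g)(\fy)\) yields \(R\le\hom(\lambda(g)(\fx),\lambda(a\cdot g)(\fy))\) for every \(g\), hence \(R\le L\). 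This monotonicity step is the main obstacle and the only place the hypotheses genuinely bite: the Kantorovich extension quantifies over all \(\V\)-relations \(g\) whereas the Kantorovich lifting quantifies only over \(\V\)-functors, and the two meets coincide precisely because monotonicity lets us shrink each unconstrained \(g\) to its fixed point \(a\cdot g\) without changing the meet. Having established \(L=R\) for all \(\lambda,\fx,\fy\), we conclude \(\eF a=\ftF^\Lambda a\) for every \((X,a)\), so the two liftings coincide.
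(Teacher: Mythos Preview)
Your proof is correct and follows essentially the same approach as the paper. The paper packages the argument slightly differently: it first records (Proposition~\ref{p:300}) that the Kantorovich lifting can be written relationally as \(\ftF^\lambda a=\bigwedge_{r}\lambda(r)\multimapdotinv\lambda(r)\) with \(r\) ranging over distributors \((\kappa,1_\kappa)\modto(X,a)\), and then proves the key identity \(\bigwedge_{g}\lambda(a\cdot g)\multimapdotinv\lambda(g)=\bigwedge_{r}\lambda(r)\multimapdotinv\lambda(r)\) at the level of extensions (Lemma~\ref{p:78}) rather than pointwise via \(\hom\); but your two inequalities \(L\le R\) and \(R\le L\) are exactly the two halves of that lemma, with the same use of \(a\cdot a\le a\) to produce a fixed point and of monotonicity of \(\lambda\) together with antitonicity of the first argument to pass from \(\lambda(a\cdot g)\) back to \(\lambda(g)\).
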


Let \(\LaxF\) denote the partially ordered class of lax extensions of a functor \(\ftF \colon \SET \to \SET\) to \(\Rels{\V}\) ordered pointwise:
\[
	\eF \leq \eF'
	\iff \forall r \in \Rels{\V}. \; \eF r \leq \eF' r;
\]
let \(\LiftFi\) denote the partially ordered \emph{subclass} of \(\LiftF\) consisting of the liftings that preserve initial morphisms, and let \(\PredFm\) denote the partially ordered \emph{subconglomerate} of \(\PredF\) of monotone predicate liftings.
Clearly, the operations of taking Kantorovich extensions
\(
\eF^{(-)} \colon \PredFm \to \LaxF,
\)
and inducing liftings from lax extensions
\(
\ftI \colon \LaxF \to \LiftFi
\)
define monotone maps.
Moreover, as we have seen in Theorem~\ref{p:53}, the monotone map \(\ftF^{(-)} \colon \PredF \to \LiftF\) corestricts to \(\LiftFi\).
Therefore, our results so far tell us that lax extensions, liftings and predicate liftings are connected through a diagram of monotone maps
\begin{small}
	\begin{displaymath}
		\begin{tikzcd}[row sep=large, column sep=scriptsize]
			\LaxF  & \LiftFi \\
			\PredFm & \PredF
			\ar[from=1-1, to=1-2, "\ftI"]
			\ar[from=1-2, to=2-2, "\ftP", bend left]
			\ar[from=2-1, to=2-2, hookrightarrow]
			\ar[from=2-1, to=1-1, "\eF^{(-)}"]
			\ar[from=2-2, to=1-2, "\ftF^{(-)}", bend left]
			\ar[from=2-2, to=1-2, "\vdash", phantom]
		\end{tikzcd}
	\end{displaymath}
\end{small}%
which commutes if the left adjoint is ignored.
In the sequel, we will see that every monotone map in this diagram is an adjoint.
In particular, it might not be immediately obvious that the monotone map \(\eF^{(-)} \colon \PredFm \to \LaxF\) is a right adjoint without  first thinking in terms of functor liftings induced by lax extensions, because the obvious guess -- taking the predicate liftings induced by a lax extension (\autoref{p:11}) -- in general does not define a monotone map \(\LaxF \to \PredFm\).
The next example illustrates this as well as the fact that there are predicate liftings compatible with a functor lifting induced by a lax extension that are not induced by the lax extension.
\begin{example}
	\label{p:110}
	The identity functor on \(\ORD\) is the lifting induced by the identity functor on \(\REL\) as a lax extension of the identity functor on \(\SET\).
	The constant map into \(\top\) is a monotone map \(2 \to 2\) and, hence, determines a predicate lifting that is compatible with the identity functor on \(\ORD\).
	It is easy to see that this predicate lifting is induced by the largest extension of the identity functor, however, it is not induced by the identity functor on \(\REL\)~\cite[Example~3.12]{GoncharovEA22}.
\end{example}
It should also be noted that the predicate liftings compatible with a functor lifting that preserves initial morphisms are not necessarily monotone.
That is, the map \(\ftP \colon \LiftFi \to \PredF\) does not necessarily corestrict to \(\PredFm\).
\begin{example}
	\label{p:76}
	Consider the lifting \({(-)}^\circ \colon \ORD \to \ORD\) of the identity functor on \(\SET\) that sends each preordered set to its dual.
	Then, the predicate lifting for \({(-)}^\circ\) determined by the \(\V\)-functor \(\hom(-,0) \colon {(2,\hom)}^\op \to (2,\hom)\) is not monotone since it sends the constant map \(0 \colon 1 \to 2\) to the constant map \(1 \colon 1 \to 2\).
\end{example}
Accordingly, we need to ``filter the monotone predicate liftings'' first.
This operation trivially defines the left adjoint
\(
\ftM \colon \PredF \to \PredFm
\)
of the inclusion map \(\PredFm \hookrightarrow \PredF\).
\begin{theorem}
	\label{p:61}
	Let \(\ftF \colon \SET \to \SET\) be a functor.
	The monotone map \(\ftI \colon \LaxF \to \LiftFi\) is order-reflecting and right adjoint to the monotone map \(\eF^{\ftM\ftP(-)} \colon \LiftFi \to \LaxF\).
\end{theorem}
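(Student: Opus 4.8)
The plan is to prove the two assertions separately: that $\eF^{\ftM\ftP(-)}$ is left adjoint to $\ftI$, and that $\ftI$ is order-reflecting. For the adjunction I would not check the Galois condition by brute force but assemble it from the adjunctions already available. By \autoref{p:51} we have $\ftP\dashv\ftF^{(-)}$ between $\LiftF$ and $\PredF$; since $\ftF^{(-)}$ lands among liftings preserving initial morphisms and, by \autoref{p:53}, every lifting in $\LiftFi$ is Kantorovich (so that the unit of $\ftP\dashv\ftF^{(-)}$ is invertible there, i.e.\ $\ftF^{\ftP\ftbF}=\ftbF$ for $\ftbF\in\LiftFi$), this adjunction restricts to $\ftP\dashv\ftF^{(-)}$ between $\LiftFi$ and $\PredF$. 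Composing with the reflection $\ftM$, left adjoint to the inclusion $\PredFm\hookrightarrow\PredF$, makes $\ftM\ftP$ left adjoint to the restriction of $\ftF^{(-)}$ to $\PredFm$. Finally, \autoref{p:57} identifies this restriction: for a monotone class $\Lambda$ one has $\ftF^{\Lambda}=\ftI(\eF^{\Lambda})$, so the restriction equals $\ftI\,\eF^{(-)}$ and hence
\[
  \ftM\ftP\;\dashv\;\ftI\,\eF^{(-)}\colon\PredFm\longrightarrow\LiftFi .
\]

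From this composite adjunction the target adjunction follows, using that every lax extension is Kantorovich. I would verify $\eF^{\ftM\ftP\ftbF}\le\eG\iff\ftbF\le\ftI\eG$ for $\ftbF\in\LiftFi$ and $\eG\in\LaxF$ as follows. For the forward implication, apply the monotone $\ftI$ together with \autoref{p:57} to obtain $\ftF^{\ftM\ftP\ftbF}=\ftI\eF^{\ftM\ftP\ftbF}\le\ftI\eG$, and combine this with the unit $\ftbF\le\ftI\eF^{\ftM\ftP\ftbF}=\ftF^{\ftM\ftP\ftbF}$ of $\ftM\ftP\dashv\ftI\,\eF^{(-)}$, giving $\ftbF\le\ftI\eG$. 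For the converse, write $\eG=\eF^{\Lambda}$ where $\Lambda$ is the class of all predicate liftings induced by $\eG$ (\autoref{d:thm:1}); these are monotone (\autoref{p:5}), so $\Lambda\in\PredFm$, and $\ftbF\le\ftI\eF^{\Lambda}$ yields $\ftM\ftP\ftbF\le\Lambda$ by the composite adjunction, whence $\eF^{\ftM\ftP\ftbF}\le\eF^{\Lambda}=\eG$ by monotonicity of $\eF^{(-)}$.

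It remains to show that $\ftI$ is order-reflecting, i.e.\ that a lax extension is determined on all $\V$-relations by its restriction to $\V$-categories. Given $r\colon X\relto Y$, I would form the cograph $\V$-category $(Z,c_r)$ on the coproduct $Z=X+Y$ whose $X$- and $Y$-blocks are the discrete structures $1_X,1_Y$, whose $X\times Y$-block is $r$, and whose $Y\times X$-block is $\bot$; this is reflexive and transitive, and for the coproduct injections $s\colon X\to Z$, $t\colon Y\to Z$ one checks $r=t^\circ\cdot c_r\cdot s$. The key ingredient is the standard fact that a lax extension $\eF$ commutes with pre- and postcomposition by functions, $\eF(u\cdot f)=\eF u\cdot\ftF f$ and $\eF(f^\circ\cdot u)=(\ftF f)^\circ\cdot\eF u$: one inequality is immediate from (L3) and (L2), and the reverse follows by inserting $1\le(\ftF f)^\circ\cdot\ftF f$ and using $f\cdot f^\circ\le 1$ together with (L1)--(L3). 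Applying this twice gives $\eF r=(\ftF t)^\circ\cdot\eF(c_r)\cdot\ftF s$, and the same formula holds for any second lax extension $\eG$ of $\ftF$. Since $c_r$ is a $\V$-category, $\ftI\eF\le\ftI\eG$ forces $\eF(c_r)\le\eG(c_r)$, and monotonicity of composition in the quantaloid $\Rels{\V}$ yields $\eF r\le\eG r$; as $r$ was arbitrary, $\eF\le\eG$.

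The main obstacle is the order-reflecting claim, specifically the reconstruction formula $\eF r=(\ftF t)^\circ\cdot\eF(c_r)\cdot\ftF s$: laxity gives only one of the two inequalities for free, and the reverse one hinges on exploiting that $s,t$ are functions, hence maps in $\Rels{\V}$, to upgrade the lax-functoriality inequalities into equalities. Once this formula is in place, order-reflection is immediate, and combined with the adjunction it also delivers the counit isomorphism $\eF^{\ftM\ftP(\ftI\eG)}=\eG$ automatically. I would also double-check the bookkeeping in the composite adjunction --- in particular that the order on $\PredF$ is reverse containment, so that $\ftM$ genuinely selects the monotone sub-class and the unit/counit point in the directions used above --- but this is routine.
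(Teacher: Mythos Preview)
Your argument is correct. The adjunction part assembles exactly the same ingredients as the paper (the restricted adjunction $\ftM\ftP\dashv\ftF^{(-)}$ on $\LiftFi$, Theorem~\ref{p:57} to identify $\ftF^{(-)}|_{\PredFm}$ with $\ftI\,\eF^{(-)}$, and Theorem~\ref{d:thm:1} together with Remark~\ref{p:5} to handle the counit side), merely packaged as the Galois biconditional rather than as unit/counit inequalities.

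For order-reflection you take a genuinely different route. The paper factorises an arbitrary $\V$-relation through the power $\V^X$, writing $r=(r^\sharp)^\circ\cdot h^X\cdot 1_X^\sharp$ with $h^X$ the canonical $\V$-category structure on $\V^X$, and then uses the same strict compatibility of lax extensions with function (co)composition that you invoke. Your cograph/collage factorisation $r=t^\circ\cdot c_r\cdot s$ achieves the same effect and is arguably more elementary, since it avoids powers and keeps the ambient $\V$-category finite when $X,Y$ are; the paper's factorisation, in exchange, is uniform in~$Y$ and reuses the object $\V^X$ that already features in the rest of the development. Either factorisation reduces the claim to the identity $\eF(g^\circ\cdot a\cdot f)=(\ftF g)^\circ\cdot\eF a\cdot\ftF f$ for functions $f,g$, which you justify correctly from (L1)--(L3).
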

\begin{corollary}
	\label{p:62}
	Let \(\ftF \colon \SET \to \SET\) be a functor.
	The monotone map \(\eF^{(-)} \colon \PredFm \to \LaxF\) is right adjoint to the order-reflecting monotone map \(\ftM\ftP\ftI \colon \LaxF \to \PredFm\).
\end{corollary}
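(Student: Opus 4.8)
The plan is to derive \autoref{p:62} by pasting together the adjunctions already in hand and then transporting one of them along the order-reflecting map $\ftI$. The target Galois connection is $\ftM\ftP\ftI \dashv \eF^{(-)}$, i.e.\ for a lax extension $\eF \in \LaxF$ and a monotone class $\Lambda \in \PredFm$ we want $\ftM\ftP\ftI(\eF) \le \Lambda \iff \eF \le \eF^\Lambda$; order-reflection of $\ftM\ftP\ftI$ will then fall out of \autoref{p:61} at no extra cost.

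First I would build a composite adjunction between $\LiftFi$ and $\PredFm$. By \autoref{p:51} we have $\ftP \dashv \ftF^{(-)}$ between $\LiftF$ and $\PredF$, and by \autoref{p:53} every Kantorovich lifting preserves initial morphisms, so $\ftF^{(-)}$ corestricts to $\LiftFi$ and the adjunction restricts to $\ftP \dashv \ftF^{(-)}$ between $\LiftFi$ and $\PredF$ (the order on $\LiftFi$ being inherited). Composing with the trivial adjunction $\ftM \dashv (\PredFm \hookrightarrow \PredF)$ yields an adjunction between $\LiftFi$ and $\PredFm$ with lower adjoint $\ftM\ftP$ and upper adjoint $\Lambda \mapsto \ftF^\Lambda$; explicitly, $\ftM\ftP(\ftbF) \le \Lambda \iff \ftbF \le \ftF^\Lambda$ for all $\ftbF \in \LiftFi$, $\Lambda \in \PredFm$. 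The decisive observation is that, by \autoref{p:57}, on a monotone class the Kantorovich lifting coincides with the lifting induced by the Kantorovich extension, i.e.\ $\ftF^\Lambda = \ftI(\eF^\Lambda)$, so the upper adjoint factors as $\ftI \circ \eF^{(-)}$.

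Next I would transport along $\ftI$. Instantiating the composite adjunction at $\ftI\eF \in \LiftFi$ gives $\ftM\ftP\ftI(\eF) \le \Lambda \iff \ftI\eF \le \ftF^\Lambda = \ftI(\eF^\Lambda)$, and since $\ftI$ is monotone and, by \autoref{p:61}, order-reflecting, the right-hand side is equivalent to $\eF \le \eF^\Lambda$. Chaining the equivalences gives precisely $\ftM\ftP\ftI \dashv \eF^{(-)}$. For order-reflection I would use the elementary fact that in a Galois connection between posets the lower adjoint $L$ is order-reflecting iff $R L = \id$ and the upper adjoint $R$ is order-reflecting iff $L R = \id$. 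Here the order-reflection of $\ftM\ftP\ftI$ (lower adjoint of $\eF^{(-)}$) reads $\eF^{(-)}\ftM\ftP\ftI = \id$, which is verbatim the condition expressing the order-reflection of the upper adjoint $\ftI$ in the connection $\eF^{(-)}\ftM\ftP \dashv \ftI$ of \autoref{p:61}; since that theorem already asserts $\ftI$ is order-reflecting, the identity holds and $\ftM\ftP\ftI$ is order-reflecting.

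I expect the only genuinely delicate point to be the factorization $\ftF^\Lambda = \ftI(\eF^\Lambda)$ supplied by \autoref{p:57}: it is exactly what makes the upper adjoint of the composite pass through $\ftI$, so that the cancellation of $\ftI$ (licensed by its order-reflection from \autoref{p:61}) becomes available. Everything else is routine bookkeeping with Galois connections; in particular no size issues intervene, since all four objects are partially ordered conglomerates and ``adjoint'' throughout means a monotone Galois connection.
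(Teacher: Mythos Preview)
Your proposal is correct and follows essentially the route the paper intends: the corollary is meant to be read off from the square~\eqref{eq:adjunctions} by composing the adjunction $\ftM\ftP \dashv \ftF^{(-)}|_{\PredFm}$ with the identification $\ftF^{(-)}|_{\PredFm} = \ftI \circ \eF^{(-)}$ supplied by \autoref{p:57}, and then cancelling $\ftI$ using its order-reflection from \autoref{p:61}. Your derivation of the order-reflection of $\ftM\ftP\ftI$ from that of $\ftI$ via the shared identity $\eF^{\ftM\ftP\ftI(-)} = \id_{\LaxF}$ is exactly the intended observation.
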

Therefore, the interplay between lax extensions, liftings and predicate liftings is captured by the diagram
\begin{equation}\label{eq:adjunctions}
	\begin{tikzcd}[row sep=large, column sep=normal]
		\LaxF  & \LiftFi \\
		\PredFm & \PredF
		\ar[from=1-1, to=1-2, "\ftI"']
		\ar[from=1-1, to=2-1, "\ftM\ftP\ftI"', bend right]
		\ar[from=1-1, to=2-1, "\dashv"', phantom]
		\ar[from=1-2, to=2-2, "\ftP", bend left]
		\ar[from=1-2, to=1-1, "\eF^{\ftM\ftP(-)}"', bend right]
		\ar[from=1-2, to=1-1, "\tdash"', phantom, bend right=12]
		\ar[from=2-1, to=1-1, "\eF^{(-)}"', bend right]
		\ar[from=2-1, to=2-2, hookrightarrow]
		\ar[from=2-2, to=1-2, "\ftF^{(-)}", bend left]
		\ar[from=2-2, to=1-2, "\vdash", phantom]
		\ar[from=2-2, to=2-1, "\ftM", bend left]
		\ar[from=2-2, to=2-1, "\dasht", phantom, bend left=12]
	\end{tikzcd}
\end{equation}
which commutes when only the right adjoints or only the left adjoints are considered.
Finally, we characterize the liftings induced by lax extensions.
\begin{theorem}
	\label{p:19}
	A lifting~$\eF$ of a \(\SET\)-functor~$\ftF$ to \(\Cats{\V}\) is induced by a lax extension of $\ftF$ to \(\Rels{\V}\)  iff~$\eF$ preserves initial morphisms and is locally monotone.
\end{theorem}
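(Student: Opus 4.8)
The plan is to prove both directions of the equivalence for \autoref{p:19}, using the adjunction diagram~\eqref{eq:adjunctions} as the backbone. The forward direction is essentially settled by earlier results: if $\eF$ is induced by a lax extension, then by \autoref{p:103}(\ref{p:120}) it preserves initial morphisms, and since lax extensions are monotone in each argument (property~\nlabel{p:102}{(L1)}), the induced lifting is locally monotone. So I would dispatch this direction quickly, noting that these are precisely the two properties carved out by $\LiftFi$ together with local monotonicity.

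For the harder converse, suppose $\eF$ is a lifting that preserves initial morphisms and is locally monotone. Since $\eF$ preserves initial morphisms, it lives in $\LiftFi$, and by \autoref{p:53} it is Kantorovich, i.e.\ $\eF = \ftF^{\ftP(\eF)}$. The natural candidate for a lax extension reproducing $\eF$ is the Kantorovich extension with respect to the \emph{monotone} predicate liftings compatible with $\eF$, namely $\eF' = \eF^{\ftM\ftP(\eF)}$, which is exactly the left adjoint in \autoref{p:61}. I would then apply \autoref{p:57}: inducing a lifting from this Kantorovich extension yields the Kantorovich lifting $\ftF^{\ftM\ftP(\eF)}$, so that $\ftI(\eF') = \ftF^{\ftM\ftP(\eF)}$. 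The goal thus reduces to showing $\ftF^{\ftM\ftP(\eF)} = \eF$, i.e.\ that restricting to \emph{monotone} compatible predicate liftings does not shrink the Kantorovich lifting below $\eF$.

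The main obstacle is precisely this last equality, because \autoref{p:76} warns that $\ftP(\eF)$ may contain non-monotone predicate liftings, so a priori $\ftM\ftP(\eF) \subsetneq \ftP(\eF)$ and the corresponding Kantorovich lifting could be strictly coarser. Here is where local monotonicity of $\eF$ must do its work. The key step is to show that for a locally monotone lifting, the non-monotone compatible predicate liftings are redundant for computing the initial structure: the cone built from the monotone compatible predicate liftings is already initial on $\eF X$. Concretely, I would argue that $\ftF^{\ftM\ftP(\eF)} \ge \eF$ always holds (fewer liftings give a coarser, hence larger, structure in the quantalic order), and for the reverse inequality I would use local monotonicity to approximate any $\V$-functor $f\colon X \to \V^\kappa$ appearing in a compatible predicate lifting by monotone data, showing each non-monotone compatible $\lambda$ factors through, or is dominated by, monotone ones. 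Since $\eF = \ftF^{\ftP(\eF)}$ by \autoref{p:53}, establishing $\ftF^{\ftM\ftP(\eF)} = \ftF^{\ftP(\eF)}$ for locally monotone $\eF$ then closes the argument.

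Finally, I would remark that this converse is exactly the statement that the two composite right adjoints around the square~\eqref{eq:adjunctions} agree on $\eF$ when $\eF$ is locally monotone, so the result can alternatively be read off as a fixpoint property of the adjunctions established in \autoref{p:61} and \autoref{p:62}, with local monotonicity being the precise condition under which $\ftI \cdot \eF^{\ftM\ftP(-)}$ acts as the identity on $\eF$.
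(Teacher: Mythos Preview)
Your overall architecture is right and matches the paper's: the forward direction is immediate from the cited properties of lax extensions (though you will need both~\ref{p:102} \emph{and}~\ref{p:0}, not just~\ref{p:102}, to get local monotonicity, via the characterization $f\le g \iff 1_X \le g^\circ\cdot b\cdot f$), and for the converse you correctly reduce via \autoref{p:53} and \autoref{p:57} to showing $\ftF^{\ftM\ftP(\eF)} = \ftF^{\ftP(\eF)}$.

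The gap is in how you close this last step. You treat the possible presence of non-monotone compatible predicate liftings as a genuine obstacle and propose a vague approximation/domination argument to show they are ``redundant'' in the initial cone. The paper's insight is much sharper: under the hypothesis that $\eF$ is locally monotone, \emph{every} compatible predicate lifting is already monotone, i.e.\ $\ftM\ftP(\eF)=\ftP(\eF)$ outright. The reason is that for a $\kappa$-ary $\lambda\in\ftP(\eF)$, the $X$-component factors as
\[
\Cats{\V}((X,1_X),\V^\kappa)\xrightarrow{\;\eF(-)\;}\Cats{\V}(\eF(X,1_X),\eF\V^\kappa)\xrightarrow{\;\lambda(1_{\V^\kappa})\cdot -\;}\Cats{\V}(\eF(X,1_X),\V),
\]
where the first map is monotone because $\eF$ is locally monotone, and the second because $\lambda(1_{\V^\kappa})$ is a $\V$-functor (by compatibility and \autoref{p:2}) and hence monotone in the natural order. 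Your worry about \autoref{p:76} is therefore a red herring here: the lifting $(-)^\circ$ in that example is precisely \emph{not} locally monotone, which is why a non-monotone compatible predicate lifting can appear there. Once you have $\ftM\ftP(\eF)=\ftP(\eF)$, the equality $\ftF^{\ftM\ftP(\eF)}=\ftF^{\ftP(\eF)}=\eF$ is immediate, and your candidate lax extension $\eF^{\ftM\ftP(\eF)}=\eF^{\ftP(\eF)}$ induces $\eF$ as required.
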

\(\V\)-enriched lax extensions have proved to be crucial to deduce quantitative van Benthem and Hennessy-Milner theorems \cite{WS20,WS21}.
We recall that a lax extension of a functor \(\ftF \colon \SET \to \SET\) to \(\Rels{\V}\) is \df{\(\V\)-enriched} \cite{WS21,GoncharovEA22} if, for all \(u \in \V\), \(u \otimes 1_{\ftF X} \leq \eF(u \otimes 1_X)\); where \(u \otimes r\) denotes the \(\V\)-relation ``\(r\) scaled by \(u\)'', that is, \((u \otimes r)(x,y) = u \otimes r(x,y)\).
\begin{theorem}
	\label{p:1}
	A lifting~$\eF$ of a \(\SET\)-functor~$\ftF$ to \(\Cats{\V}\) is induced by a \(\V\)-enriched lax extension of $\ftF$ to \(\Rels{\V}\) iff~$\eF$ preserves initial morphisms and is \(\Cats{\V}\)-enriched.
\end{theorem}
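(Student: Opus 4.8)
The plan is to prove \autoref{p:1} as the \(\V\)-enriched refinement of \autoref{p:19}, handling the two implications separately and threading the enrichment through the passage between liftings and lax extensions.

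For the ``only if'' direction, assume \(\eF\) is the lifting induced by a \(\V\)-enriched lax extension \(\eF \colon \Rels{\V} \to \Rels{\V}\). Preservation of initial morphisms is already guaranteed by the forward implication of \autoref{p:19}, so the task is to verify \(\Cats{\V}\)-enrichment, i.e.\ \([f,g] \le [\eF f, \eF g]\) for all \(\V\)-functors \(f,g \colon (X,a) \to (Y,b)\). Setting \(u = [f,g] = \bigwedge_{x} b(f(x),g(x))\), one first observes the relational inequality \(u \otimes g \le b \cdot f\) between \(\V\)-relations \(X \relto Y\) (at \((x,g(x))\) the left side is \(u\) and the right side is \(b(f(x),g(x)) \ge u\); elsewhere the left side is \(\bot\)). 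Applying \(\eF\) and using the standard identity \(\eF(r \cdot h) = \eF r \cdot \ftF h\), valid for any lax extension and any function \(h\) (a consequence of \ref{p:102}--\ref{p:0} together with \(\ftF h \dashv (\ftF h)^\circ\)), together with the \(\V\)-enrichment inequality \(u \otimes 1_{\ftF Y} \le \eF(u \otimes 1_Y)\), yields \(u \otimes \ftF g \le \eF b \cdot \ftF f\). Evaluating this \(\V\)-relation at a pair \((\fx, \ftF g(\fx))\) gives \(u \le (\eF b)(\ftF f(\fx), \ftF g(\fx))\) for every \(\fx \in \ftF X\), and the infimum over \(\fx\) is exactly \([\eF f, \eF g]\).

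For the ``if'' direction, assume \(\eF\) preserves initial morphisms and is \(\Cats{\V}\)-enriched. Since \(\Cats{\V}\)-enrichment implies local monotonicity, \autoref{p:19} applies, and by the adjunction of \autoref{p:61} the lax extension inducing \(\eF\) may be taken to be the Kantorovich extension \(\eF^{\ftM\ftP(\eF)}\) with respect to the class \(\Lambda = \ftM\ftP(\eF)\) of monotone predicate liftings compatible with \(\eF\). It thus suffices to show that this lax extension is \(\V\)-enriched, that is, \(u \otimes 1_{\ftF X} \le \eF^{\Lambda}(u \otimes 1_X)\) for all \(u\) and \(X\). Unfolding the definition of \(\eF^{\Lambda}\) and applying the defining adjunction of the Kan extension \(\multimapdotinv\), this reduces --- for each \(\kappa\)-ary \(\lambda \in \Lambda\) and each \(g \colon \kappa \relto X\), and using \((u \otimes 1_X) \cdot g = u \otimes g\) and \((u \otimes 1_{\ftF X}) \cdot \lambda(g) = u \otimes \lambda(g)\) --- to the \emph{scaling inequality} \(u \otimes \lambda(g) \le \lambda(u \otimes g)\).

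It remains to prove this scaling inequality for every predicate lifting \(\lambda\) compatible with the \(\Cats{\V}\)-enriched lifting \(\eF\), which is the crux of the argument. Let \(L = \lambda(1_{\ftII{\V^\kappa}}) \colon \eF \V^\kappa \to \V\) be the representing \(\V\)-functor (\autoref{p:43}, \autoref{p:2}). Transposing \(g \colon \kappa \relto X\) to a map \(\bar g \colon X \to \V^\kappa\), naturality gives \(\lambda(g)(\fx) = L(\ftF \bar g(\fx))\) for \(\fx \in \ftF X\); since \(\overline{u \otimes g} = s_u \cdot \bar g\) with \(s_u = u \otimes (-) \colon \V^\kappa \to \V^\kappa\) the pointwise scaling, we likewise get \(\lambda(u \otimes g)(\fx) = L(\ftF s_u(\ftF \bar g(\fx)))\). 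The key point is that \(s_u\) is a \(\V\)-functor (because \(u \otimes (-) \colon \V \to \V\) is, by the defining adjunction of \(\hom\)) lying at distance \([\id, s_u] \ge u\) from the identity in \(\Cats{\V}(\V^\kappa, \V^\kappa)\), since \(\hom(v, u \otimes v) \ge u\) for all \(v\). Feeding this through \(\Cats{\V}\)-enrichment of \(\eF\) gives \((\eF d_\kappa)(\fy, \ftF s_u(\fy)) \ge u\) for every \(\fy \in \ftF \V^\kappa\), where \(d_\kappa\) is the structure of \(\V^\kappa\); and since \(L\) is a \(\V\)-functor, \(u \le (\eF d_\kappa)(\fy, \ftF s_u(\fy)) \le \hom(L(\fy), L(\ftF s_u(\fy)))\), i.e.\ \(u \otimes L(\fy) \le L(\ftF s_u(\fy))\). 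Taking \(\fy = \ftF \bar g(\fx)\) yields the scaling inequality pointwise, completing the proof. The main obstacle is precisely this last step --- recognising the scaling map as a \(\V\)-functor at distance \(u\) from the identity and combining \(\Cats{\V}\)-enrichment of \(\eF\) with \(\V\)-functoriality of the representing map \(L\); the relational identities and the reduction via \(\multimapdotinv\) are then routine.
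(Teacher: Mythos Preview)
Your argument is correct and follows essentially the same route as the paper: the paper's proof is a one-liner saying that \autoref{p:1} follows ``identically'' from the proof of \autoref{p:19} once one invokes \cite[Theorem~2.16]{GoncharovEA22} and \cite[Theorem~4.1]{GoncharovEA22}, and what you have done is unpack precisely the content of those citations. In the backward direction, the paper's proof of \autoref{p:19} shows that compatible predicate liftings are monotone and then applies \autoref{p:57}; for \autoref{p:1} the cited results supply the enriched analogues, namely that compatible predicate liftings satisfy the scaling inequality \(u \otimes \lambda(g) \le \lambda(u\otimes g)\) (this is what \cite{GoncharovEA22} calls a \(\V\)-enriched predicate lifting) and that the Kantorovich extension of such liftings is \(\V\)-enriched --- exactly the two facts you prove directly via the scaling map \(s_u\) and the representing \(\V\)-functor~\(L\). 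Your forward direction likewise spells out the short relational calculation that the paper leaves implicit. So there is no genuine divergence: you have written out the details that the paper outsources.
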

Our characterization of lax extensions makes it clear that there is a large collection of Kantorovich liftings that are not induced by lax extensions.
For instance, it follows from Theorem~\ref{p:19} that the liftings \((-)^\circ \colon \Cats{\V} \to \Cats{\V}\) and \((-)_s \colon \Cats{\V} \to \Cats{\V}\) (see Example~\ref{p:103}) of the identity functor on \(\SET\) to \(\Cats{\V}\) are Kantorovich but are not induced by lax extensions.
Furthermore, as the composite of Kantorovich liftings is Kantorovich, in many situations it is possible to compose these functors with other Kantorovich liftings to generate liftings that are not induced by lax extensions.
\section{Behavioural Distance}
\label{sec:bd}

One main motivation for lifting functors to metric spaces was to obtain coalgebraic notions of behavioural distance~\cite{BBKK18,WS20}. Indeed,
\emph{every} functor \(\ftF \colon \Cats{\V}\to \Cats{\V}\) gives rise to a notion of distance on a \(\ftF\)-coalgebras:
\begin{defn}\cite{ForsterEA23}
	Let \((X,a,\alpha)\) be a coalgebra for a functor \(\ftF \colon \Cats{\V} \to \Cats{\V}\).
	The \df{behavioural distance} \(\bd_\alpha^\ftF(x,y)\) of \(x,y \in X\) is
	\begin{align}\label{eq:bd-form}
		\bd_\alpha^\ftF(x,y) = \bigvee \{b(f(x),f(y)) \mid f \colon (X,a,\alpha) \to (Y,b,\beta) \in \Coalg{\ftF}\}.
	\end{align}
\end{defn}
\noindent Notice the analogy with the standard notion of behavioural
equivalence: Two states are behaviourally equivalent if they can be
made equal under some coalgebra morphism; and according to the above
definition,  two states in a metric coalgebra have low behavioural distance if they can be
made to have low distance under some coalgebra morphism.

Kantorovich liftings and lax extensions are key ingredients in mentioned alternative coalgebraic approaches to behavioural distance on \(\SET\)-based coalgebras.
Let \(\ftF \colon \SET \to \SET\) be a functor.
A Kantorovich lifting \(\ftF^\Lambda \colon \Cats{\V} \to \Cats{\V}\) induces a notion of \emph{behavioural distance} on an \(\ftF\)-coalgebra \(\alpha \colon X \to \ftF X\) as the greatest \(\V\)-categorical structure \((X,a)\) that makes \(\alpha\) a \(\V\)-functor of type \((X,a) \to \ftF^\Lambda (X,a)\) \cite{BBKK18,KKK+21}.
From Theorem~\ref{p:53} and \cite[Proposition~12]{ForsterEA23} (generalized to \(\Cats{\V}\), with the same proof), we obtain that this distance coincides with behavioural distance as defined above.
On the other hand, every lax extension \(\eF \colon \Rels{\V} \to \Rels{\V}\) of~\(\ftF\) also induces a \emph{behavioural distance} on an \(\ftF\)-coalgebra \(\alpha \colon X \to \ftF X\) as the greatest \emph{simulation} on \(\alpha\) \cite{Rut98,Wor00,Gavazzo18,WS20},
i.e.\ the greatest \(\V\)-relation \(s \colon X \relto X\) such that \(\alpha \cdot s \leq \eF s \cdot \alpha\). It follows by routine calculation that this distance coincides with the distance defined via the lifting induced by the lax extension and, hence, Theorem~\ref{p:57} ensures that, if we start with a collection of monotone predicate liftings, then the corresponding Kantorovich extension and Kantorovich lifting yield the same notion of behavioural distance.
This allows including the approach to behavioural distance via lax extensions in the categorical framework for \emph{indistinguishability} introduced recently by Komorida et al. \cite{KKK+21}.
On the other hand, there are notions of behavioural distance defined via Kantorovich liftings that do not arise via lax extensions.
Indeed, it has been shown that the neighbourhood functor \(\ftN \colon \SET \to \SET\) does not admit a lax extension to \(\REL\) that preserves converses (\(\eF (r^\circ) = (\eF r)^\circ\)) whose (\(\two\)-valued) notion of behavioural distance coincides with behavioural equivalence \cite[Theorem~12]{MV15}.
However, from \cite[Theorem~34, Proposition~A.6]{ForsterEA23} (see also \cite{HKP09}), we can conclude that the (\(\two\)-valued) notion of behavioural distance defined by the canonical Kantorovich lifting  of \(\ftN\) to \(\EQ\) w.r.t. to the predicate lifting induced by the identity natural transformation \(\ftN \to \ftN\) coincides with behavioural equivalence.
(It is easy to see that Marti and Venema's result holds even if one allows lax extensions of \(\ftN\) that do not preserve converses, and that the situation remains the same in the asymmetric case.) 

\section{Expressivity of Quantitative Coalgebraic Logics}\label{sec:log}

We proceed to connect the characterization of Kantorovich functors
with existing expressivity results for quantitative coalgebraic logic,
focusing from now on on symmetric \(\V\)-categories. Therefore, we
interpret the \(\V\)-categorical notions and results also with
\(\Cats{\V}_{\sym}\) instead of \(\Cats{\V}\) and \(\V_{s}\) instead
of \(\V\).

We recall a variant \cite{ForsterEA23} of
(quantitative) coalgebraic
logic~\cite{Pat04,Sch08,CKP+11,KonigMikaMichalski18,WS20} that follows the
paradigm of interpreting modalities via predicate liftings, in this case of
$\V$-valued predicates for a \(\Cats{\V}\)-functor (Section~\ref{sec:pl}). Let \(\Lambda\) be a \emph{set} of finitary predicate liftings for a functor \(\ftF \colon \Cats{\V}_\sym \to \Cats{\V}_\sym\).
The syntax of \emph{quantitative coalgebraic modal logic} is then defined by the grammar
\begin{flalign*}
	 &  & \phi\Coloneqq \top \mid \phi_1 \vee \phi_2 \mid \phi_1 \wedge \phi_2 \mid u \otimes \phi \mid \hom_s(u,\phi) \mid \lambda(\phi_1,\dots,\phi_n) &  & (u \in \V, \lambda \in \Lambda)
\end{flalign*}
where $\Lambda$ is a set of \emph{modalities} of finite arity, which we identify, by abuse of notation, with the given set \(\Lambda\) of predicate liftings.
We view all other connectives as propositional operators.
Let $\calL(\Lambda)$ be the set of modal formulas thus defined.

The semantics is given by assigning to each formula \(\phi \in \calL(\Lambda)\) and each coalgebra \(\alpha \colon X \to \ftF X\) the \emph{interpretation} of~$\phi$ over~$\alpha$,
i.e.\ the \(\V\)-functor \(\Sema{\phi} \colon X \to \V\) recursively defined as follows:
\begin{itemize}[wide]
	\item for \(\phi = \top\), we take \(\Sema{\top}\) to be the \(\V\)-functor given by the constant map into~\(\top\);
	\item for an $n$-ary propositional operator~$p$, we put $\Sema{p(\phi_1, \ldots, \phi_n)} = p (\Sema{\phi_1}, \ldots, \Sema{\phi_n})$, with~$p$ interpreted using the lattice structure of~$\V$ and the \(\V\)-categorical structure \(\hom_s\) of \(\V_s\), respectively, on the right-hand side;
	\item for  \(n\)-ary $\lambda \in \Lambda$, we put $\Sema{\lambda(\phi_1, \ldots, \phi_n)} = \lambda (\langle\Sema{\phi_1}, \ldots, \Sema{\phi_n}\rangle) \cdot \alpha$, where \(\langle \Sema{\phi_1}, \ldots, \Sema{\phi_n}\rangle\) denotes the  \(\V\)-functor \((X,a)\to\V^n\) canonically determined by \(\Sema{\phi_1}, \ldots, \Sema{\phi_n}\).
\end{itemize}
We then obtain a notion of logical distance:
\begin{defn}
	Let \(\Lambda\) be a set of predicate liftings for a functor \(\ftF \colon \Cats{\V} \to \Cats{\V}\).
	The \df{logical distance} \(ld_\alpha^\Lambda\) on an \(\ftF\)-coalgebra \((X,a,\alpha)\) is the initial structure on \(X\) w.r.t.\ the structured cone of all maps
	\(
	\Sema{\phi} \colon X \to \ftII{(\V,\hom_s)}
	\)
	with \(\phi \in \calL(\Lambda)\).
	More explicitly, for all \(x,y \in X\),
	\begin{displaymath}\textstyle
		ld_\alpha^\Lambda(x,y) = \bigwedge \{\hom_s(\Sema{\phi}(x),\Sema{\phi}(y)) \mid \phi \in \calL(\Lambda)\}.
	\end{displaymath}
\end{defn}
In the remainder of the paper, we establish criteria under which a \(\Cats{\V}_\sym\)-functor admits a set of predicate liftings for which logical and behavioural distances coincide.
Recall that a (quantitative) coalgebraic logic is \df{expressive} if \(ld_\alpha^\Lambda \leq \bd_\alpha^\ftF\), for every \(\ftF\)-coalgebra~\((X,\alpha)\).
(It is easy to show that the reverse inequality holds universally \cite[Theorem~16]{ForsterEA23}).

Existing expressivity results for quantitative coalgebraic logics for \(\SET\)-functors depend crucially on Kantorovich liftings (e.g.~\cite{WS20,WS21,KKK+21,ForsterEA23}).
However, it has been shown \cite{ForsterEA23} that the Kantorovich property can be usefully detached from the notion of functor lifting.
\begin{defn}
	Let \(\Lambda\) be a class of predicate liftings for a functor \(\ftF \colon \Cats{\V} \to \Cats{\V}\).
	The functor \(\ftF\) is \df{\(\Lambda\)-Kantorovich} if for every \(\V\)-category \(X\), the cone of all \(\V\)-functors
	\(
	\lambda(f) \colon \ftF X \to \V
	\),
	with \(\lambda\in \Lambda\) \(\kappa\)-ary and \(f \in \Cats{\V}(X,\V^\kappa)\), is initial.
	A functor \(\ftF \colon \Cats{\V} \to \Cats{\V}\) is said to be \df{Kantorovich} if it is \(\Lambda\)-Kantorovich for some class \(\Lambda\) of predicate liftings for \(\ftF\).
\end{defn}
Clearly, every Kantorovich lifting of a \(\SET\)-functor to \(\Cats{\V}\) w.r.t.\ a class \(\Lambda\) of predicate liftings is \(\Lambda\)-Kantorovich.
Moreover, Theorem~\ref{p:53} is easily generalized to Kantorovich functors.

\begin{theorem}\label{thm:kantorovich-initial}
	A \(\Cats{\V}\)-functor is Kantorovich iff it preserves initial morphisms.
\end{theorem}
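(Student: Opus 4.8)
The plan is to prove both implications using the two fundamental properties of \(\V = (\V,\hom)\) recorded in \autoref{d:prop:2}: that \(\V\) is injective with respect to initial morphisms, and that for every \(\V\)-category \(X\) the cone \((f \colon X \to \V)_f\) of all \(\V\)-functors into \(\V\) is initial. Throughout I would use the Yoneda description from \autoref{p:43}: a \(\kappa\)-ary \(\V\)-valued predicate lifting \(\lambda\) for \(\ftF\) is determined by a \(\V\)-functor \(\ell \colon \ftF \V^\kappa \to \V\), with \(\lambda(f) = \ell \cdot \ftF f\) for every \(\V\)-functor \(f \colon X \to \V^\kappa\); note that each such \(\lambda(f)\) is a \(\V\)-functor out of \(\ftF X\), being a composite of \(\V\)-functors.

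For the direction that a Kantorovich functor preserves initial morphisms, suppose \(\ftF\) is \(\Lambda\)-Kantorovich and let \(f \colon (X,a) \to (Y,b)\) be initial. I would compare the \(\V\)-categorical structure of \(\ftF X\) with the structure pulled back from \(\ftF Y\) along \(\ftF f\). Naturality yields \(\lambda(h) \cdot \ftF f = \lambda(h \cdot f)\) for every \(h \colon Y \to \V^\kappa\), so the pullback structure at \((\xi,\eta)\) is the meet of all \(\hom(\lambda(h\cdot f)(\xi), \lambda(h\cdot f)(\eta))\), while the Kantorovich structure of \(\ftF X\) is the meet of all \(\hom(\lambda(g)(\xi),\lambda(g)(\eta))\) with \(g \colon X \to \V^\kappa\). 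One inequality is immediate since \(\{h \cdot f\} \subseteq \{g\}\); the reverse holds provided every \(g \colon X \to \V^\kappa\) factors as \(h \cdot f\), which is exactly where initiality of \(f\) enters: since \(\V\) is injective with respect to initial morphisms, so is the power \(\V^\kappa\) (injectivity is inherited by products), whence \(g\) extends along \(f\). Thus the two structures agree and \(\ftF f\) is initial.

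For the converse, assume \(\ftF\) preserves initial morphisms and take \(\Lambda\) to be the class of \emph{all} \(\V\)-valued predicate liftings for \(\ftF\); this single class will witness the Kantorovich property at every \(X\) simultaneously. Fix a \(\V\)-category \(X\) and form the embedding \(e \colon X \to \V^I\) into the power indexed by the set \(I = \Cats{\V}(X,\V)\), with components all \(\V\)-functors \(X \to \V\). By \autoref{d:prop:2} this cone is initial, so \(e\) is an initial morphism, and therefore \(\ftF e \colon \ftF X \to \ftF \V^I\) is initial by hypothesis. Applying \autoref{d:prop:2} to \(\ftF \V^I\) then expresses its structure as the meet over all \(\V\)-functors \(q \colon \ftF \V^I \to \V\); composing with \(\ftF e\) gives the identity \(c(\xi,\eta) = \bigwedge_q \hom(q(\ftF e\,\xi), q(\ftF e\,\eta))\) for the structure \(c\) of \(\ftF X\). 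Since each \(q \cdot \ftF e\) equals \(\lambda_q(e)\) for the predicate lifting \(\lambda_q\) determined by \(q\), it lies in the Kantorovich cone, which yields the nontrivial inequality; the opposite inequality is the meet-over-a-larger-family observation from the first paragraph. Hence the cone is initial and \(\ftF\) is Kantorovich.

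The main obstacle is the converse, and specifically the observation that the universal cone witnessing the Kantorovich property can be extracted from a \emph{single} initial embedding \(e \colon X \to \V^I\): preservation of initial morphisms transports the initiality of \(e\) to \(\ftF e\), and \autoref{d:prop:2} converts this abstract initiality into a concrete factorization of the structure of \(\ftF X\) through predicate liftings of the fixed arity \(|I|\). Once this is in place, both inequalities are routine. The argument is a direct abstraction of the proof of \autoref{p:53}, with the lifting of a \(\SET\)-functor replaced by an arbitrary functor \(\ftF \colon \Cats{\V} \to \Cats{\V}\) and the role of the forgetful functor removed.
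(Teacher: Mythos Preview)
Your proof is correct and follows essentially the same approach as the paper, which simply refers back to the proof of \autoref{p:53}. The only cosmetic difference is the choice of initial embedding into a power of~\(\V\): the paper uses the Yoneda embedding \(X \to \V^{|X|}\), while you use the evaluation embedding \(X \to \V^{\Cats{\V}(X,\V)}\); both are initial by \autoref{d:prop:2}, and the remainder of the argument is identical.
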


\begin{theorem}
	\label{p:18}
	A \(\Cats{\V}_\sym\)-functor is Kantorovich iff it preserves initial morphisms.
\end{theorem}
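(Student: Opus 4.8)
The plan is to obtain \autoref{p:18} as the exact symmetric counterpart of \autoref{thm:kantorovich-initial}, by transferring the latter's proof verbatim from \(\Cats{\V}\) to \(\Cats{\V}_\sym\) with the object of truth values \(\V=(\V,\hom)\) replaced throughout by its symmetrisation \(\V_s=(\V,\hom_s)\). The first step is to isolate precisely which properties of the truth object the proof of \autoref{thm:kantorovich-initial} actually consumes. On inspection, the only inputs are the two clauses of \autoref{d:prop:2} — that \(\V\) is injective with respect to initial morphisms, and that for every \(\V\)-category \(X\) the cone \((f\colon X \to \V)_{f}\) of all \(\V\)-functors into \(\V\) is initial — together with the routine fact that a power of injectives is again injective, so that each \(\V^\kappa\) is injective. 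Recall how these are used: for ``Kantorovich \(\Rightarrow\) preserves initial morphisms'' one extends, along a given initial morphism \(m\colon X\to Y\), the \(\V\)-functors \(X\to\V^\kappa\) indexing the Kantorovich cone, using injectivity of \(\V^\kappa\); for the converse one realises \(X\) as an initial morphism \(e\colon X\to\V^\kappa\) via \autoref{d:prop:2}, so that \(\ftF e\) is initial, and then extends an arbitrary \(\V\)-functor \(h\colon\ftF X\to\V\) along \(\ftF e\) to a \(\V\)-functor \(\widehat\lambda\colon \ftF\V^\kappa\to\V\), exhibiting \(h=\widehat\lambda\cdot\ftF e=\lambda(e)\) as a member of the Kantorovich cone; initiality of that cone then follows from \autoref{d:prop:2} applied to \(\ftF X\).

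Consequently the entire argument goes through for \(\Cats{\V}_\sym\) as soon as both clauses of \autoref{d:prop:2} hold with \(\V_s\) in place of \(\V\). The observation that makes this easy is that, for a \emph{symmetric} \(\V\)-category \(X\), a map \(h\colon X\to\V\) is a \(\V\)-functor \(X\to(\V,\hom)\) if and only if it is a \(\V\)-functor \(X\to\V_s=(\V,\hom_s)\): since \(a(x,y)=a(y,x)\), the inequality \(a(x,y)\le\hom(h(x),h(y))\) also yields \(a(x,y)=a(y,x)\le\hom(h(y),h(x))\), hence \(a(x,y)\le\hom(h(x),h(y))\wedge\hom(h(y),h(x))=\hom_s(h(x),h(y))\), while the reverse direction is immediate from \(\hom_s\le\hom\). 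Moreover, by \autoref{rem:sym-sep} initiality of a cone between symmetric \(\V\)-categories is the same notion whether computed in \(\Cats{\V}_\sym\) or in \(\Cats{\V}\), since initial structures of cones valued in symmetric \(\V\)-categories are automatically symmetric.

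From this observation I would deduce the symmetric form of \autoref{d:prop:2} directly. For injectivity, given an initial \(m\colon X\to Y\) in \(\Cats{\V}_\sym\) and a \(\V\)-functor \(h\colon X\to\V_s\), the map \(m\) is initial in \(\Cats{\V}\) and \(h\colon X\to\V\) is a \(\V\)-functor, so by \autoref{d:prop:2} it extends along \(m\) to a \(\V\)-functor \(\bar h\colon Y\to\V\); as \(Y\) is symmetric, \(\bar h\colon Y\to\V_s\) is again a \(\V\)-functor, the required extension. For initiality of the cone, fixing symmetric \(X\), each \(\V\)-functor \(f\colon X\to\V_s\) gives \(a(x,y)\le\hom_s(f(x),f(y))\), so \(a(x,y)\le\bigwedge_{f}\hom_s(f(x),f(y))\); conversely \(\hom_s\le\hom\) together with the unsymmetric clause of \autoref{d:prop:2} yields \(\bigwedge_{f}\hom_s(f(x),f(y))\le\bigwedge_{f}\hom(f(x),f(y))=a(x,y)\), whence equality. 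Finally \(\V_s^\kappa\) is injective, being a power of the injective \(\V_s\), so the proof of \autoref{thm:kantorovich-initial} now applies line by line.

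The only point genuinely requiring care — and I expect it to be a point of care rather than a real obstacle — is this verification that both clauses of \autoref{d:prop:2} survive the passage to \(\V_s\); everything else is a transcription of the unsymmetric argument. The coincidence of \(\V\)-functors into \(\V\) and into \(\V_s\) on symmetric domains is exactly what collapses that verification back onto the already established \autoref{d:prop:2}.
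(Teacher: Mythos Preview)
Your proposal is correct and follows the same route as the paper, which for \autoref{p:18} literally writes ``See proof of \autoref{p:53}'' and leaves the symmetric adaptation implicit. You make that adaptation explicit: you isolate that the only ingredients are the two clauses of \autoref{d:prop:2}, and you verify both for \(\V_s\) in \(\Cats{\V}_\sym\) via the observation that \(\V\)-functors \(X\to\V\) and \(X\to\V_s\) coincide on symmetric~\(X\). The one cosmetic difference is that the paper obtains the initial morphism \(X\to\V^\kappa\) (with \(\kappa=\lvert X\rvert\)) from the Yoneda embedding rather than by tupling the cone of \autoref{d:prop:2}; both give what is needed.
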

\begin{example}
	\label{p:14}
	\begin{enumerate}[wide]
		\item The inclusion functor \(\Cats{\V}_{\sym,\sep} \hookrightarrow \Cats{\V}_\sym\) has a
		      left adjoint \((-)_q \colon \Cats{\V}_\sym \to \Cats{\V}_{\sym,\sep}\) that quotients every \(X\) by its natural preorder, which for  symmetric~$X$ is an equivalence, and gives rise to a Kantorovich functor on \(\Cats{\V}_\sym\).
		\item \label{p:17} Given a bounded-by-\(1\) pseudometric space \((X,d)\), i.e. an object of \(\Cats{[0,1]_\oplus}_\sym \simeq \PBMET\), the \df{Prokhorov distance} \cite{Pro56} for probability measures on the measurable space of Borel sets of \((X,d)\) is defined by \(d^P(\mu,\upsilon) = \inf\{\epsilon > 0 \mid \mu(A) \leq \upsilon (A^\epsilon) + \epsilon \text{ for all Borel sets $A\subseteq X$}\}\), where \(A^\epsilon = \{x \in X \mid \inf_{y \in A} d(x,y) \leq \epsilon\}\).
		      It is straightforward to verify that this distance defines a \(\PBMET\)-functor (which acts on morphisms by measuring preimages) that preserves isometries and, therefore, it is Kantorovich.
		\item \label{p:15} For every \(\V\)-category \((X,a)\), the functor
		      \(
		      (X,a)\times- \colon\Cats{\V}\to\Cats{\V}
		      \)
		      is Kantorovich.
		      If the underlying lattice of \(\V\) is Heyting, then under certain conditions this functor has a right adjoint \cite{CH06,CHS09}
		      which is Kantorovich as well.
		      Here, for \(X=(X,a)\) exponentiable, the right adjoint \((-)^{X}\) of \(X\times-\) sends a
		      \(\V\)-category \(Y=(Y,b)\) to the \(\V\)-category \(Y^{X}=(Y^{X},c)\) with
		      underlying set
		      \(
		      \{\text{all \(\V\)-functors \((1,k)\times(X,a)\to(Y,b)\)}\}
		      \)
		      and, for \(h,k\in Y^{X}\),
		      \begin{displaymath}\textstyle
			      c(h,k)=\bigwedge_{x_{1},x_{2}\in X}b(h(x_{1}),k(x_{2}))^{a(x_{1},x_{2})},
		      \end{displaymath}
		      where \((-)^{u}\colon\V\to\V\) denotes the right adjoint of \(u\wedge- \colon\V\to\V\).
		      For a \(\V\)-functor \(f \colon (Y_{1},b_{1})\to(Y_{2},b_{2})\), the
		      \(\V\)-functor \(f^{X}\colon (Y_{1}^{X},c_{1})\to (Y_{2}^{X},c_{2})\) sends
		      \(h\in Y_{1}^{X}\) to~\(f\cdot h\).
	\end{enumerate}
\end{example}

\noindent To ensure that a Kantorovich functor is represented by finitary predicate liftings, we need to impose a size constraint:

\begin{defn}
	A functor~$\ftF\colon\Cats{\V}_\sym \to\Cats{\V}_\sym$ is \df{\(\omega\)-bounded} if for every symmetric $\V$-category~$X$ and every $t\in\ftF X$, there exists a finite subcategory $X_0\subseteq X$ and $t'\in\ftF X_0$ such that $t=\ftF i(t')$ where~$i$ is the inclusion $X_0\to X$.
\end{defn}

\begin{example}
	Every lifting of a finitary \(\SET\)-functor to \(\Cats{\V}_\sym\) is \(\omega\)-bounded.
\end{example}
\begin{proposition}
	\label{p:16}
	Let \(\ftF \colon \Cats{\V}_\sym \to \Cats{\V}_\sym\) be a Kantorovich functor.
	If \(\ftF\) is \(\omega\)-bounded, then \(\ftF\) is Kantorovich w.r.t. a set of finitary predicate liftings.
\end{proposition}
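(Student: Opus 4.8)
The plan is to take for $\Lambda'$ the set of \emph{all} finitary ($\V_s$-valued) predicate liftings for $\ftF$. By Remark~\ref{p:43} the $n$-ary predicate liftings form a \emph{set} for each fixed $n$, so $\Lambda'$, being their union over $n<\omega$, is again a set; this already settles the ``set'' part of the claim, and it remains to show that $\ftF$ is $\Lambda'$-Kantorovich. Concretely, I must show that for every symmetric $\V$-category $X$ the cone of all $\mu(g)\colon\ftF X\to\V_s$, with $\mu\in\Lambda'$ of finite arity $n$ and $g\colon X\to\V_s^{n}$ a $\V$-functor, is initial. One inequality is free: each $\mu(g)=\bar\mu\cdot\ftF g$ is a composite of $\V$-functors, hence a $\V$-functor, so the initial structure this cone generates dominates $\ftF X(-,-)$. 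For the converse it suffices to fix $s,t\in\ftF X$ and, using that $\ftF$ is $\Lambda$-Kantorovich for the given class $\Lambda$, to reproduce each contribution of $\Lambda$ finitarily: for every $\lambda\in\Lambda$ of arity $\kappa$ and every $f\colon X\to\V_s^{\kappa}$ I will produce a finitary $\mu\in\Lambda'$ and a $\V$-functor $g\colon X\to\V_s^{n}$ with $\mu(g)(s)=\lambda(f)(s)$ and $\mu(g)(t)=\lambda(f)(t)$.

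This is where $\omega$-boundedness enters. Choosing finite supports for $s$ and $t$ and forming their union, I obtain a single finite subcategory $i\colon X_0\hookrightarrow X$ together with $s',t'\in\ftF X_0$ such that $s=\ftF i(s')$ and $t=\ftF i(t')$. By the Yoneda description of predicate liftings (Remark~\ref{p:43}) one has $\lambda(f)=\bar\lambda\cdot\ftF f$ for the $\V$-functor $\bar\lambda:=\lambda(1_{\V_s^{\kappa}})\colon\ftF(\V_s^{\kappa})\to\V_s$, so that $\lambda(f)(s)=\bar\lambda(\ftF(f\cdot i)(s'))$ and likewise for $t$; thus both values depend only on the $\V$-functor $f\cdot i\colon X_0\to\V_s^{\kappa}$. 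The crucial observation is that a \emph{finite} $\V$-category $X_0=\{x_1,\dots,x_n\}$ admits a \emph{finite} initial cone into $\V_s$, namely the family $\bigl(a_0(x_l,-)\colon X_0\to\V_s\bigr)_{l=1}^{n}$: each $a_0(x_l,-)$ is a $\V$-functor by the triangle inequality, and the induced initial structure recovers $a_0$ (the inequality $\ge$ holds by $\V$-functoriality, and $\le$ by evaluating the coordinate $l$ with $x_l=x$ and using $k\le a_0(x,x)$). Consequently the induced map $g_0\colon X_0\to\V_s^{n}$ is an initial morphism.

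It remains to transport $f\cdot i$ through $g_0$ and then back up to $X$. Since $\V_s$ is injective with respect to initial morphisms (the $\V_s$-version of Proposition~\ref{d:prop:2}) and this property is inherited by powers, both $\V_s^{\kappa}$ and $\V_s^{n}$ are injective with respect to initial morphisms. Injectivity of $\V_s^{\kappa}$ along the initial morphism $g_0$ yields a $\V$-functor $r\colon\V_s^{n}\to\V_s^{\kappa}$ with $r\cdot g_0=f\cdot i$, and injectivity of $\V_s^{n}$ along the initial inclusion $i$ extends $g_0$ to a $\V$-functor $g\colon X\to\V_s^{n}$ with $g\cdot i=g_0$. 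Setting $\bar\mu:=\bar\lambda\cdot\ftF r\colon\ftF(\V_s^{n})\to\V_s$, which is a composite of $\V$-functors, determines by Remark~\ref{p:43} an $n$-ary predicate lifting $\mu\in\Lambda'$. A direct computation then gives $\mu(g)(s)=\bar\mu(\ftF(g\cdot i)(s'))=\bar\lambda(\ftF(r\cdot g_0)(s'))=\bar\lambda(\ftF(f\cdot i)(s'))=\lambda(f)(s)$, and symmetrically for $t$, as required. Taking the infimum over all $\lambda\in\Lambda$ and all $f$, and invoking the $\Lambda$-Kantorovich equation for $\ftF X(s,t)$, supplies the missing inequality and hence the initiality of the $\Lambda'$-cone.

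The support-combination and the Yoneda bookkeeping are routine. The conceptual heart of the argument—and the step I expect to demand the most care—is the passage from infinite to finite arity: recognizing that $\omega$-boundedness reduces everything to a finite subcategory $X_0$, that finiteness of $X_0$ furnishes a \emph{finite} initial cone into $\V_s$, and that injectivity of the powers of $\V_s$ lets one simultaneously factor $f\cdot i$ through $\V_s^{n}$ and lift the result back to $X$, so that the infinitary lifting $\bar\lambda$ can be pre-composed with $\ftF r$ into a genuinely finitary predicate lifting.
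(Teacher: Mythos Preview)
Your proof is correct and shares the same core ingredients with the paper's: $\omega$-boundedness to pass to a finite subcategory $i\colon X_0\hookrightarrow X$, the (symmetric) Yoneda embedding $X_0\to\V_s^{n}$ as an initial morphism, and injectivity of powers of $\V_s$ with respect to initial morphisms. The paper organizes the argument a bit differently: instead of starting from the given class~$\Lambda$ and converting each $\lambda(f)$ into a matching finitary $\mu(g)$, it uses Theorem~\ref{p:18} (Kantorovich $\Rightarrow$ preservation of initial morphisms) and reruns the proof of Theorem~\ref{p:53} on the finite $X_0$ to obtain directly that the cone of \emph{all} finitary predicate liftings is initial on $\ftF X_0$; it then extends each $f\colon X_0\to\V_s^{\kappa}$ to $\bar f\colon X\to\V_s^{\kappa}$ by injectivity and transports the equality $\ftF X_0(\fx',\fy')=\ftF X(\fx,\fy)$ via a short factoring lemma. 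Your route is a touch more constructive (each finitary lifting is exhibited explicitly as $\bar\lambda\cdot\ftF r$) and never invokes preservation of initial morphisms, while the paper's is marginally more conceptual and never refers back to the original~$\Lambda$; neither gains anything substantial over the other.
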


\noindent Finally, from \cite[Theorem~31]{ForsterEA23} we obtain:

\begin{corollary}
	Let \(\V\) be a finite quantale, and let \(\ftF \colon \Cats{\V}_\sym \to \Cats{\V}_\sym\) be a lifting of a finitary functor that preserves initial morphisms.
	Then there is a set \(\Lambda\) of predicate liftings for \(\ftF\) of finite arity such that the coalgebraic logic \(\calL(\Lambda)\) is expressive.
\end{corollary}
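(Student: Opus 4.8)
The plan is to chain the structural results established above with the expressivity theorem of \cite{ForsterEA23}, which we use as a black box. The first step is to check that the hypotheses of the earlier propositions are satisfied. Since \(\ftF\) is a lifting of a finitary \(\SET\)-functor to \(\Cats{\V}_\sym\), the example immediately preceding \autoref{p:16} tells us that \(\ftF\) is \(\omega\)-bounded. Moreover, \(\ftF\) preserves initial morphisms by assumption, so \autoref{p:18} yields that \(\ftF\) is Kantorovich, i.e.\ \(\Lambda'\)-Kantorovich for some \emph{class} \(\Lambda'\) of predicate liftings. (Note that, being a lifting to \(\Cats{\V}_\sym\), \(\ftF\) automatically acts non-expansively on morphisms, as morphisms of \(\Cats{\V}_\sym\) are exactly the \(\V\)-functors, so no separate verification of this condition is needed.)

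The second step is to upgrade this class to a \emph{set} of finitary liftings, which is precisely the content of \autoref{p:16}: an \(\omega\)-bounded Kantorovich functor is already \(\Lambda\)-Kantorovich for some set \(\Lambda\) of \emph{finitary} predicate liftings. Thus we obtain a set \(\Lambda\) of finitary predicate liftings for which the cone of all \(\V\)-functors \(\lambda(f) \colon \ftF X \to \V\), with \(\lambda \in \Lambda\) \(\kappa\)-ary and \(f \in \Cats{\V}_\sym(X,\V^\kappa)\), is initial for every symmetric \(\V\)-category \(X\). This set \(\Lambda\) is the candidate set of modalities for the logic.

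With \(\Lambda\) in hand and \(\V\) finite, the statement then follows directly from \cite[Theorem~31]{ForsterEA23}: a \(\Cats{\V}_\sym\)-functor that is Kantorovich with respect to a set of finitary predicate liftings admits an expressive logic \(\calL(\Lambda)\), i.e.\ \(ld_\alpha^\Lambda \le \bd_\alpha^\ftF\) for every \(\ftF\)-coalgebra (the reverse inequality holding universally). The main point to verify — and essentially the only non-bookkeeping step — is that the output of \autoref{p:16} matches exactly the input format required by that theorem; I expect the finiteness of \(\V\) to be what powers the approximation argument underlying the quantitative Hennessy--Milner result, and the finitarity of the liftings together with \(\omega\)-boundedness to be what keeps the inductive modal-depth argument well-founded. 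The only real obstacle I anticipate is confirming that the Kantorovich condition used here (initiality of the predicate-lifting cone) coincides verbatim with the separation/Kantorovich hypothesis assumed in \cite{ForsterEA23}, and that no additional side condition is silently required; given how \(\omega\)-boundedness and finiteness of \(\V\) are arranged, this identification should be routine.
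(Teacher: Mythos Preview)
Your proposal is correct and follows exactly the approach the paper takes: the corollary is obtained by combining the example that liftings of finitary \(\SET\)-functors are \(\omega\)-bounded, \autoref{p:18} (Kantorovich \(\iff\) preserves initial morphisms), and \autoref{p:16} (upgrading to a set of finitary predicate liftings), and then feeding these into \cite[Theorem~31]{ForsterEA23}. The only superfluous remark is your aside about \(\ftF\) ``acting non-expansively on morphisms'': that is the local non-expansiveness condition relevant to the \emph{next} corollary (\autoref{cor:unit-hm}) rather than this one, and in any case is not automatic from \(\ftF\) being a functor on \(\Cats{\V}_\sym\); but since it plays no role here, this does not affect the argument.
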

\begin{corollary}\label{cor:unit-hm}
	Let \(\ftF \colon \PBMET \to \PBMET\) be a functor that preserves isometries, is locally non-expansive, and admits a dense \(\omega\)-bounded subfunctor.
	Then there is a set \(\Lambda\) of predicate liftings for \(\ftF\) of finite arity such that the coalgebraic logic \(\calL(\Lambda)\) is expressive.
\end{corollary}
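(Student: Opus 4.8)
The plan is to reduce the statement, through the $\V$-categorical machinery developed above, to the metric Hennessy--Milner theorem of~\cite[Theorem~31]{ForsterEA23}; the only real work left is to produce a \emph{set} of \emph{finitary} predicate liftings witnessing the Kantorovich property. First I would record the dictionary: since $\PBMET\simeq\Cats{[0,1]_\oplus}_\sym$ and, in this symmetric setting, the initial morphisms are exactly the isometries (Table~\ref{fig:sep}), the hypothesis that $\ftF$ preserves isometries says precisely that $\ftF$ preserves initial morphisms. By \autoref{p:18}, $\ftF$ is therefore Kantorovich, i.e.\ $\Lambda_0$-Kantorovich for some \emph{class} $\Lambda_0$ of predicate liftings. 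This class need be neither small nor finitary, so it cannot yet serve directly as a set of modalities.

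To cut $\Lambda_0$ down, I would pass to the dense $\omega$-bounded subfunctor $\ftF'\hookrightarrow\ftF$ supplied by hypothesis. Since for an isometry $m$ the map $\ftF m$ is an isometry and $\ftF' m$ is its restriction to the subspaces $\ftF' X\subseteq\ftF X$ carrying the subspace metric, $\ftF'$ again preserves isometries, hence initial morphisms; so by \autoref{p:18} it is Kantorovich, and being $\omega$-bounded it is, by \autoref{p:16}, $\Lambda$-Kantorovich for a \emph{set} $\Lambda$ of \emph{finitary} predicate liftings. It then remains to read these as predicate liftings for $\ftF$: for each $\V$-functor $f\colon X\to\V^\kappa$ the component $\lambda(f)\colon\ftF' X\to\V$ is non-expansive and defined on a dense subspace of $\ftF X$, so---the underlying space of $\V_s$ being the complete space $[0,1]$---it extends uniquely to a non-expansive map $\ftF X\to\V$; uniqueness propagates naturality, turning $\Lambda$ into a set of finitary predicate liftings for $\ftF$ for which $\ftF$ is still $\Lambda$-Kantorovich.

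With $\ftF$ now exhibited as Kantorovich w.r.t.\ a set $\Lambda$ of finitary predicate liftings, locally non-expansive, and equipped with a dense $\omega$-bounded subfunctor, all hypotheses of \cite[Theorem~31]{ForsterEA23} are in place, and I would invoke it to conclude $ld_\alpha^\Lambda\le\bd_\alpha^\ftF$ for every $\ftF$-coalgebra $(X,a,\alpha)$; since the reverse inequality holds universally, $\calL(\Lambda)$ is expressive. The main obstacle is the density-transfer in the second paragraph: one must check that the unique non-expansive extensions from $\ftF' X$ to $\ftF X$ assemble into a genuine natural transformation, and that the initiality (Kantorovich) cones transport from the dense subfunctor to $\ftF$ itself. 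Everything else is bookkeeping with results already established.
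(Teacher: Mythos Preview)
Your endgame is the same as the paper's --- invoke \cite[Theorem~31]{ForsterEA23} --- but you are doing strictly more work to get there than the paper intends. In the paper, the corollary is stated immediately after ``from \cite[Theorem~31]{ForsterEA23} we obtain'', with no further argument: the only input this paper contributes is \autoref{p:18}, which turns ``preserves isometries'' into ``Kantorovich''. The remaining hypotheses in the corollary (locally non-expansive, admits a dense $\omega$-bounded subfunctor) are not things to be discharged here; they are precisely the standing assumptions of \cite[Theorem~31]{ForsterEA23}, and that theorem already produces the set~$\Lambda$ of finitary modalities and the expressivity conclusion. So the paper's proof is literally: \autoref{p:18} $+$ \cite[Theorem~31]{ForsterEA23}.

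What you do instead is unpack part of the machinery of \cite{ForsterEA23}: you pass to the dense $\omega$-bounded subfunctor~$\ftF'$, apply \autoref{p:16} there to get a finitary set~$\Lambda$, and then push~$\Lambda$ back to~$\ftF$ by extending each $\lambda(f)$ along the dense inclusion $\ftF'X\hookrightarrow\ftF X$ using completeness of~$[0,1]$. This is a reasonable reconstruction, but the step you yourself flag --- that the \emph{initiality} of the Kantorovich cone survives the density transfer --- is not free: the required inequality amounts to a $\sup$/$\lim$ interchange that does not hold on general grounds, and making it go through is exactly where the ``locally non-expansive'' hypothesis and the density arguments of \cite{ForsterEA23} do their work. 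So your detour is not wrong in spirit, but it re-proves (incompletely, as you note) content that the cited theorem already packages; the paper deliberately avoids this by citing it wholesale.
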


These instantiate to results on concrete system types, e.g. ones induced by (sub)functors listed in Example \ref{p:14}, such as probabilistic transition systems equipped with a behavioural distance induced by the functor that sends a bounded metric space \(X\) to the subspace of the space of all probability measures on \(X\) equipped with the Prokhorov distance (see Example \ref{p:14}(\ref{p:17})) determined by the closure of the set of finitely supported probability measures.

\section{Conclusions and Future Work}

Quantitative coalgebraic Hennessy-Milner
theorems~\cite{KM18,WS20,ForsterEA23} assume that the functor (on
metric spaces) describing the system type is \emph{Kantorovich},
i.e.~canonically induced by a suitable choice of -- not necessarily
monotone -- predicate liftings, which then serve as the modalities of
a logic that characterizes behavioural distance. We have shown as one
of our main results that a functor on (quantale-valued) metric spaces
is Kantorovich iff it preserves initial morphisms
(i.e.~isometries). As soon as such a functor additionally adheres to
the expected size and continuity constraints (which replace the
condition of finite branching found in the classical Hennessy-Milner
theorem for labelled transition systems), one thus has a logical
characterization of behavioural distance in coalgebras for the
functor, in the sense that behavioural distance equals logical
distance.

In fact we have shown that \emph{every} functor on metric spaces can
be captured by a generalized form of predicate liftings where the
object of truth values may change along the lifting. A simple example
is the discretization functor, which is characterized by a predicate
lifting in which the truth value object for the input predicates is
equipped with the indiscrete pseudometric, so that the lifting accepts
\emph{all} predicates instead of only non-expansive ones. This hints
at a perspective to design heterogeneous modal logics that
characterize behavioural distance for such functors, with modalities
connecting different types of formulas (e.g.\ non-expansive
vs.~unrestricted), which we will pursue in future work. One
application scenario for such a logic are behavioural distances on
probabilistic systems involving total variation distance, which may be
seen as a composite of the usual probabilistic Kantorovich functor and
the discretization functor.

\bibliographystyle{splncs04}

\providecommand{\noopsort}[1]{}

\clearpage
\appendix
\section{Appendix: Omitted Proofs and Details}

\subsection{Proof of~\autoref{p:54}}

\begin{lemma}
	\label{p:73}
	Let \(\ftbF \colon \Cats{\V} \to \Cats{\V}\) be a lifting of a functor \(\ftF \colon \SET \to \SET\), \(X\) be a \(\V\)-category and \(\Lambda\) be a class of natural transformations
	\[
		\lambda \colon \Cats{\V}(-,A_\lambda) \longrightarrow \Cats{\V}(\ftbF -, \V).
	\]
	If the cocone
	\[
		\big(\lambda_{X} \colon \Cats{\V}(X,A_\lambda) \longrightarrow \Cats{\V}(\ftbF X, \V)\big)_{\lambda \in \Lambda}
	\]
	is jointly epic, then the topological lifting w.r.t. the class of all natural transformations
	\[
		\lambda \colon \Cats{\V}(-,A_\lambda) \longrightarrow \SET(\ftF\ftII{-}, \ftII{\V})
	\]
	that factor as
	\begin{displaymath}
		\begin{tikzcd}
			\Cats{\V}(-,A_\lambda) & \Cats{\V}(\ftbF -,\V) \\
			& \SET(\ftF\ftII{-}, \ftII{\V})
			\ar[from=1-1, to=1-2, "\lambda"]
			\ar[from=1-1, to=2-2, "\lambda"']
			\ar[from=1-2, to=2-2, "\ftII{-}_{-,\V}"]
		\end{tikzcd}
	\end{displaymath}
	for some \(\lambda \in \Lambda\) coincides with \(\ftbF\) on \(X\).
\end{lemma}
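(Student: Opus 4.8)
The plan is to prove that the two $\V$-category structures living on the common underlying set $\ftF\ftII{X}$ coincide. Write $\ftbF X = (\ftF\ftII{X}, b)$ for the structure coming from the given lifting, and $\ftF^I X = (\ftF\ftII{X}, a)$ for the object produced by the topological lifting under consideration; the goal is to establish $a = b$. Unwinding the definition of the initial structure, $a$ is given by
\[
a(s,t) = \bigwedge_{\lambda, f} \hom\big(\lambda(f)(s),\, \lambda(f)(t)\big),
\]
where $\lambda$ ranges over $\Lambda$ and $f$ over all $\V$-functors $X \to A_\lambda$, and $\lambda(f) \colon \ftF\ftII{X} \to \ftII{\V}$ denotes the \emph{underlying map} of the $\V$-functor $\lambda_X(f) \colon \ftbF X \to \V$ — which is precisely the datum recorded by the factorization of the new natural transformation through the forgetful inclusion $\Cats{\V}(\ftbF -, \V) \to \SET(\ftF\ftII{-}, \ftII{\V})$.

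The crucial input on the side of $b$ is \autoref{d:prop:2}: since the cone of all $\V$-functors out of $\ftbF X$ into $\V$ is initial, the structure $b$ is itself an initial structure, namely
\[
b(s,t) = \bigwedge_{g \colon \ftbF X \to \V} \hom\big(g(s),\, g(t)\big),
\]
with the meet taken over all $\V$-functors $g$. It then suffices to compare the two indexing families of maps into $\ftII{\V}$. The inequality $b \le a$ is automatic: each $\lambda(f)$ underlies a genuine $\V$-functor $\ftbF X \to \V$ — this is exactly the hypothesis that the members of $\Lambda$ take values in $\Cats{\V}(\ftbF -, \V)$ — so $b$ is dominated by every summand of $a$, hence by their meet. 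For the reverse inequality $a \le b$ I would invoke the jointly-epic hypothesis on the cocone $(\lambda_X)_{\lambda \in \Lambda}$: it says precisely that every $\V$-functor $g \colon \ftbF X \to \V$ is of the form $\lambda_X(f)$ for some $\lambda \in \Lambda$ and some $f \colon X \to A_\lambda$, so every index $g$ occurring in the formula for $b$ already occurs among the $\lambda(f)$; taking a meet over the (at least as large) family for $a$ can only make it smaller. Combining the two inequalities yields $a = b$, i.e.\ $\ftF^I X = \ftbF X$.

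I expect this to be essentially bookkeeping once the objects are correctly lined up, with no genuinely hard step. The point that most needs care is the identification of underlying maps: one must check that the set map entering the initial structure for the topological lifting is \emph{the same} map that underlies $\lambda_X(f) \colon \ftbF X \to \V$, so that the two families are comparable as families of maps into $\ftII{\V}$, and that the joint epicness is used concretely — as surjectivity onto the elements of $\Cats{\V}(\ftbF X, \V)$ — rather than as a merely formal epi property. Since only the single object $X$ is at stake, no naturality or functoriality of $\ftF^I$ needs to be re-established here; that is already guaranteed by the general construction of topological liftings recalled in the main text.
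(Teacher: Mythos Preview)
Your proof is correct and follows essentially the same approach as the paper's: both arguments rest on Proposition~\ref{d:prop:2} (initiality of the cone of all \(\V\)-functors \(\ftbF X \to \V\)) together with the jointly-epic hypothesis to identify the structured cone defining \(\ftF^I X\) with the full cone \(\Cats{\V}(\ftbF X,\V)\). The paper phrases this as a direct identification of the two cones, whereas you unpack the initial structures pointwise and check the two inequalities \(a \le b\) and \(b \le a\) separately; the content is the same.
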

\begin{proof}
	It follows by hypothesis that the topological lifting maps the \(\V\)-category \(X\) to the domain of the initial lift of the structured cone
	\[
		(\ftII{f} \colon \ftF\ftII{X} \longrightarrow \ftII{\V})_{f \in \Cats{\V}(\ftbF X, \V)}.
	\]
	Therefore, the claim is consequence of the fact that the cone
	\(\Cats{\V}(\ftbF X, \V)\) is initial (see Remark~\ref{d:prop:2}).
\end{proof}

The Yoneda lemma guarantees that there is an epi-cocone of natural transformations
\[
	\lambda \colon \Cats{\V}(-,A_\lambda) \longrightarrow \Cats{\V}(\ftbF -, \V)
\]
determined by every \(\V\)-category \(A\) and every \(\V\)-functor \(\lambda \colon \ftbF A \to \V\).
\qed

\subsection{Details of Example~\autoref{p:21}(\ref{p:23})}

Consider the discrete preordered set with two elements \((2=\{0,1\},1_2)\) and the preordered set \((3=\{0,1,2\},\vee)\) with three elements generated by \(2 \leq 1\) and \(2 \leq 0\).
Then, the inclusion \((2,1_2) \hookrightarrow (3,\vee)\) is initial, however its image under the lifting it is not, since the lifting acts as identity on \((2,1_2)\) but sends \((3,\vee)\) to the indiscrete preordered set with three elements.
To see that this is a topological lifting of identity functor on \(\SET\) constructed from the \(\two\)-valued identity predicate lifting for \(\ftId \colon \SET \to \SET\) by choosing \(A=(2,1_2)\), note that for every preordered set \((X,r)\), \(x\) is equivalent to \(y\) w.r.t to the smallest equivalence relation that contains \(r\) if and only if there is a zigzag between \(x\) and \(y\) in \((X,r)\) if and only if for every monotone map \(f \colon (X,r) \to (2,1_2)\), \(f(x) = f(y)\).

\subsection{Proof of~\autoref{p:51}}

Let \(\ftbF \colon \Cats{\V} \to \Cats{\V}\) be a lifting of \(\ftF\) and \(S\) be a set of predicate liftings for \(\ftF\).
We claim that \(\ftP(\ftbF) \supseteq S\) iff \(\ftbF \leq \ftF^S\).
Suppose that \(\ftP(\ftbF) \supseteq S\).
Then, since every \(\kappa\)-ary predicate lifting in \(S\) is compatible with \(\ftbF\), for every \(\V\)-functor \(f \colon (X,a) \to \V^\kappa\), the map
\begin{displaymath}
	\begin{tikzcd}
		\ftbF(X,a) & \ftbF(\V^\kappa)& \V
		\ar[from=1-1, to=1-2, "\ftF f"']
		\ar[from=1-2, to=1-3, "\lambda(1_{\V^\kappa})"']
		\ar[from=1-1, to=1-3, bend left, "\lambda(f)"]
	\end{tikzcd}
\end{displaymath}
is a \(\V\)-functor.
Therefore, \(\ftbF(X,a) \leq \ftF^S (X,a)\).

On the other hand, suppose that \(\ftbF \leq \ftF^S\).
By definition of Kantorovich lifting, every predicate lifting in \(S\) is compatible with \(\ftF^S\).
Hence, for every \(\lambda \colon \ftF^S(\V^\kappa) \to \V\) determined by a \(\kappa\)-ary predicate lifting \(\lambda\) in \(S\),
\begin{displaymath}
	\begin{tikzcd}
		\ftbF(\V^\kappa) & \ftF^S(\V^\kappa)& \V
		\ar[from=1-1, to=1-2, "1_{\ftF\V^\kappa}"']
		\ar[from=1-2, to=1-3, "\lambda"']
		\ar[from=1-1, to=1-3, bend left, "\lambda"]
	\end{tikzcd}
\end{displaymath}
is a \(\V\)-functor.
\qed

\subsection{Proof of~\autoref{p:53}}

Firstly, we show that every Kantorovich lifting of a functor \(\ftF \colon \SET \to \SET\) preserves initial morphisms.

Let \(i \colon (X,a) \to (Y,b)\) be an initial morphism,
\(\Lambda\) a class of predicate liftings for \(\ftF\),
\(j \colon (Z,c) \to \ftF^\Lambda(Y,b)\) a \(\V\)-functor, and
\(h \colon Z \to \ftF X\) a map such that \(j = \ftF i \cdot h\).

By definition of \(\ftF^\Lambda\) it is sufficient to show that for every
\(\kappa\)-ary predicate lifting for \(\ftF\) in \(\Lambda\) and every \(\V\)-functor
\(f \colon (X,a) \to \V^\kappa\), \(\lambda(f) \cdot h\) is a \(\V\)-functor.
Since \(\V\) is injective in \(\Cats{\V}\) w.r.t. initial
morphisms, \(\V^\kappa\) is also injective in \(\Cats{\V}\) with respect
to initial morphisms.
Hence, for every \(\V\)-functor
\(f \colon (X,a) \to \V^\kappa\) there is a \(\V\)-functor
\(g \colon (Y,b) \to \V^\kappa\) such that \(f = g \cdot i\).
Consequently,
\[
	\lambda(f) \cdot h
	= \lambda(g \cdot i) \cdot h
	= \lambda(g) \cdot \ftF i \cdot h
	= \lambda(g) \cdot j
\]
is a \(\V\)-functor.

Secondly, we show that the converse statement holds. Suppose that \(\ftbF\)
is a lifting that preserves initial morphisms. We already know from
Theorem~\ref{p:51} that \(\ftbF \leq \ftF^{\ftP(\ftbF)}\). To prove that under
our assumption the reverse inequality also holds, let \((X,a)\) be a
\(\V\)-category and \(\kappa = |X|\). Then, the Yoneda embedding \((X,a)
\to [(X,a)^\op,\V]\) gives us an initial morphism

\begin{displaymath}
	\begin{tikzcd}
		(X,a) & {[(X,a)^\op,\V]} & \V^X & \V^\kappa.
		\ar[from=1-1, to=1-2]
		\ar[from=1-2, to=1-3, hookrightarrow]
		\ar[from=1-1, to=1-4, bend left, "\yoneda"]
		\ar[from=1-3, to=1-4, "\sim"]
	\end{tikzcd}
\end{displaymath}

Hence, since \(\ftbF\) preserves initial morphisms,
\(\ftbF \yoneda \colon \ftbF(X,a) \to \ftbF(\V^\kappa)\)
is initial. Now, let \(\ftP_\kappa(\ftbF)\) denote the set of all
\(\kappa\)-ary predicate liftings in \(\ftP(\ftbF)\).
Given that the cone of all \(\V\)-functors
\[
	\ftbF(\V^\kappa) \longrightarrow \V
\]
is initial and the composition of initial cones is initial, the cone
\[
	(\lambda(\yoneda) \colon \ftbF(X,a) \longrightarrow \V)_{\lambda \in \ftP_\kappa(\ftbF)}
\]
is initial. Therefore, \(\ftF^{\ftP(\ftbF)} (X,a) \leq \ftbF (X,a)\).
\qed

\subsection{Proof of~\autoref{p:57}}

To obtain this result it is useful to express the Kantorovich lifting in the language of \(\V\)-relations \cite{GoncharovEA22}.
In the sequel, given a \(\V\)-functor \(f \colon X \to \V^\kappa\), we denote by \(f^\flat \colon \kappa \relto X\) the \(\V\)-relation defined by \(f^\flat(k,x) = f(x)(k)\), for all \(k \in \kappa\) and \(x \in X\).
Also, we recall that a \(\V\)-relation \(r \colon X \relto Y\) is a distributor \(r \colon (X,a) \modto (Y,b)\) if \(r \cdot a \leq r\) and \(b \cdot r \leq r\).

\begin{proposition}
	\label{p:35}
	Let \((X,a)\) be a \(\V\)-category, \(\kappa\) a cardinal and
	\(f \colon X \to \V^\kappa\) a function. The following propositions are
	equivalent:
	\begin{enumerate}
		\item \(f \colon (X, a) \to \V^\kappa\) is a \(\V\)-functor;
		\item \(a \leq f^\flat \multimapdotinv f^\flat\);
		\item \label{p:36} \(a \cdot f^\flat = f^\flat\);
		\item \(f^\flat \colon (\kappa, 1_\kappa) \relto (X,a)\) is a \(\V\)-distributor.
	\end{enumerate}
\end{proposition}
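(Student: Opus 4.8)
The plan is to reduce all four conditions to the single inequality $a \cdot f^\flat \leq f^\flat$, starting from an explicit formula for the Kan extension $f^\flat \multimapdotinv f^\flat$. First I would compute, directly from the definition of the extension of a $\V$-relation, that
\[
	(f^\flat \multimapdotinv f^\flat)(x,y) = \bigwedge_{k \in \kappa} \hom(f^\flat(k,x), f^\flat(k,y)) = \bigwedge_{k \in \kappa} \hom(f(x)(k), f(y)(k)).
\]
The right-hand side is precisely the hom $[f(x),f(y)]$ of the power $\V^\kappa$. Hence condition~(2), which reads $a(x,y) \leq (f^\flat \multimapdotinv f^\flat)(x,y)$ for all $x,y$, unfolds verbatim into $a(x,y) \leq [f(x),f(y)]$, i.e.\ into condition~(1) that $f$ is a $\V$-functor. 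This settles (1)~$\iff$~(2).

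Next I would invoke the defining adjunction of the extension, namely $t \cdot f^\flat \leq f^\flat \iff t \leq f^\flat \multimapdotinv f^\flat$, instantiated at $t = a$. This immediately yields (2)~$\iff$~$(a \cdot f^\flat \leq f^\flat)$. To upgrade this inequality to the equality of condition~(3), I would observe that reflexivity of the $\V$-category $(X,a)$ always forces the reverse inequality $f^\flat \leq a \cdot f^\flat$: evaluating $(a \cdot f^\flat)(k,x) = \bigvee_{x'} f^\flat(k,x') \otimes a(x',x)$ at $x' = x$ and using $k \leq a(x,x)$ gives $f^\flat(k,x) \leq (a \cdot f^\flat)(k,x)$. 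Thus $a \cdot f^\flat \leq f^\flat$ is equivalent to $a \cdot f^\flat = f^\flat$, giving (2)~$\iff$~(3).

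Finally, for (3)~$\iff$~(4) I would simply unfold the definition of distributor: $f^\flat \colon (\kappa, 1_\kappa) \relto (X,a)$ is a distributor precisely when $f^\flat \cdot 1_\kappa \leq f^\flat$ and $a \cdot f^\flat \leq f^\flat$. The former is trivially an equality, so the distributor condition collapses to $a \cdot f^\flat \leq f^\flat$, which by the previous step is condition~(3). I do not anticipate a genuine obstacle here, as the argument is a chain of routine identities; the only points requiring care are the bookkeeping of the composition and extension conventions, in particular ensuring that the quantified variable of $f^\flat \multimapdotinv f^\flat$ ranges over $\kappa$ rather than over $X$, and verifying the reflexivity inequality $f^\flat \leq a \cdot f^\flat$, which is exactly what permits the passage from the inequality to the equality in~(3).
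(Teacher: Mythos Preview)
Your argument is correct. The paper actually states this proposition without proof, treating it as a standard enriched-category fact, so there is no paper proof to compare against; your write-up supplies exactly the routine verification one would expect, and each step (the explicit formula for $f^\flat \multimapdotinv f^\flat$ via the universal property, the adjunction instance at $t=a$, the use of $1_X\le a$ for the reverse inequality, and the unwinding of the distributor axioms with the identity on $\kappa$) is sound under the paper's conventions. One cosmetic point: you use the symbol $k$ both for the tensor unit and for a generic element of $\kappa$ in the same paragraph; renaming the index variable would avoid any ambiguity when you invoke $k\le a(x,x)$.
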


\begin{corollary}
	\label{p:68}
	Let \(\calC = (f_i \colon X \to |\V^\kappa|)_{i \in I}\) be a structured cone.
	The initial  \(\V\)-category on \(X\) w.r.t. \(\calC\) is given by
	\[
		\bigwedge_{i \in I} {f_i}^\flat \multimapdotinv {f_i}^\flat.
	\]
\end{corollary}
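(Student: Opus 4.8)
The plan is to obtain the formula directly from \autoref{p:35}, exploiting that the initial structure on $X$ coincides with the largest $\V$-category structure making every $f_i$ a $\V$-functor (which is immediate from its defining formula $a(x,y) = \bigwedge_{i} a_i(f_i(x),f_i(y))$, with $a_i$ the power structure $[u,v]=\bigwedge_{j\in\kappa}\hom(u(j),v(j))$ of $\V^\kappa$). By \autoref{p:35}, a structure $c$ on $X$ turns a given $f_i \colon (X,c) \to \V^\kappa$ into a $\V$-functor exactly when $c \le {f_i}^\flat \multimapdotinv {f_i}^\flat$; hence $c$ makes \emph{all} the $f_i$ into $\V$-functors precisely when $c \le \bigwedge_{i \in I} {f_i}^\flat \multimapdotinv {f_i}^\flat$.

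First I would check that $b \coloneqq \bigwedge_{i \in I} {f_i}^\flat \multimapdotinv {f_i}^\flat$ is itself a $\V$-category structure: each ${f_i}^\flat \multimapdotinv {f_i}^\flat$ is the extension of a $\V$-relation along itself, hence reflexive (using $k \le \hom(u,u)$) and transitive (by the standard interaction of $\otimes$ with $\hom$), so a $\V$-category, and a pointwise meet of $\V$-categories on a fixed set is again a $\V$-category. Once $b$ is known to be a legitimate structure, the equivalence above shows at once that $b$ makes every $f_j$ a $\V$-functor (since $b \le {f_j}^\flat \multimapdotinv {f_j}^\flat$) and that it dominates every structure that does so; thus $b$ is the sought largest structure, i.e.\ the initial one.

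A more transparent route bypasses the maximality argument by a pointwise computation. Unfolding the defining adjunction of the extension gives $({f_i}^\flat \multimapdotinv {f_i}^\flat)(x,y) = \bigwedge_{j \in \kappa} \hom({f_i}^\flat(j,x), {f_i}^\flat(j,y))$, and substituting ${f_i}^\flat(j,x) = f_i(x)(j)$ identifies this with $[f_i(x),f_i(y)]$; taking the meet over $i$ then reproduces verbatim the defining formula of the initial structure. There is no substantive obstacle here, since the statement is essentially a repackaging of \autoref{p:35}. The only point deserving attention is the bookkeeping in the extension formula, namely getting the variance of $\multimapdotinv$ right so that the quantifier over $\kappa$ ends up \emph{inside} the meet defining the power of $\V^\kappa$, which is exactly what makes the two expressions coincide on the nose.
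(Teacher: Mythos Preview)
Your proposal is correct and matches the paper's intent: the corollary is stated without proof, immediately after \autoref{p:35}, so the paper clearly means it as a direct consequence of the equivalence $(1)\Leftrightarrow(2)$ there, which is precisely what you invoke. Your pointwise unfolding of $({f_i}^\flat\multimapdotinv{f_i}^\flat)(x,y)=\bigwedge_{j\in\kappa}\hom(f_i(x)(j),f_i(y)(j))=[f_i(x),f_i(y)]$ makes the identification with the defining formula of the initial structure explicit, and the alternative maximality argument is an equally valid rephrasing of the same content.
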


\begin{proposition}
	\label{p:300}
	Let \(\lambda\) be a \(\kappa\)-ary \(\V\)-valued predicate lifting for a functor \(\ftF \colon \SET \to \SET\).
	The Kantorovich lifting \(\ftF^\lambda \colon \Cats{\V} \to \Cats{\V}\) of \(\ftF\) w.r.t. \(\lambda\) sends a \(\V\)-category \((X,a)\) to the \(\V\)-category \((\ftF X, \ftF^\lambda a)\), where
	\[
		\ftF^\lambda a
		= \bigwedge_{r \colon (\kappa,1_\kappa) \modto (X,a)} \lambda(r) \multimapdotinv \lambda(r).
	\]
\end{proposition}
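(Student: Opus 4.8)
The plan is to unfold the definition of the Kantorovich lifting and then translate the initial structure it produces into the \(\V\)-relational language, reindexing the defining meet along the bijection of \autoref{p:35}. By \autoref{p:49}, \(\ftF^\lambda a\) is the initial structure on \(\ftF X\) with respect to the structured cone of all maps \(\lambda(f) \colon \ftF X \to \V\) with \(f \colon (X,a) \to \V^\kappa\) a \(\V\)-functor, where \(\V\) carries its canonical structure \((\V,\hom)\). Directly from the definition of initial structure, this value is the meet over \(f\) of the one-map initial structures, so it suffices to (i) rewrite the single contribution of each \(f\) as \(\lambda(f^\flat) \multimapdotinv \lambda(f^\flat)\), and (ii) reindex the meet along \(f \mapsto f^\flat\).

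For (ii), I would invoke the equivalence of items (1) and (4) in \autoref{p:35}: the assignment \(f \mapsto f^\flat\) is a bijection between \(\V\)-functors \(f \colon (X,a) \to \V^\kappa\) and distributors \(r \colon (\kappa, 1_\kappa) \modto (X,a)\). Reading the predicate lifting point-free as a map \(\Rels{\V}(\kappa, X) \to \Rels{\V}(1, \ftF X)\), as recalled just before \autoref{p:11}, the function \(\lambda(f) \colon \ftF X \to \V\) is precisely the \(\V\)-relation \(\lambda(f^\flat) \colon 1 \relto \ftF X\). Hence the meet over \(\V\)-functors \(f\) becomes a meet over distributors \(r = f^\flat\), matching the indexing in the statement.

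For (i), the computation is a one-line instance of the Kan extension in the quantaloid \(\Rels{\V}\): writing \(p = \lambda(r) \colon 1 \relto \ftF X\), the universal property of \(\multimapdotinv\) (that \(p \multimapdotinv p\) is the largest \(t\) with \(t \cdot p \le p\)) gives the pointwise description \((p \multimapdotinv p)(\fx, \fy) = \bigwedge_{\ast \in 1} \hom(p(\ast, \fx), p(\ast, \fy)) = \hom(\lambda(r)(\fx), \lambda(r)(\fy))\), the meet over the singleton \(1\) being trivial. Equivalently, this is \autoref{p:68} specialized to \(\kappa = 1\) (using \(\V \cong \V^1\)), since then \((\lambda(f))^\flat = \lambda(f^\flat)\). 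This is exactly the initial structure on \(\ftF X\) induced by the single map \(\lambda(r) \colon \ftF X \to (\V,\hom)\). Assembling (i) and (ii) yields
\[
	\ftF^\lambda a(\fx, \fy)
	= \bigwedge_{f \colon (X,a) \to \V^\kappa} \hom(\lambda(f)(\fx), \lambda(f)(\fy))
	= \bigwedge_{r \colon (\kappa,1_\kappa) \modto (X,a)} \big(\lambda(r) \multimapdotinv \lambda(r)\big)(\fx, \fy),
\]
as claimed. I expect the only real obstacle to be notational: keeping the two readings of \(\lambda\) -- as a function \(\ftF X \to \V\) and as a \(\V\)-relation \(1 \relto \ftF X\) -- synchronized, and checking that the extension of a relation with domain \(1\) collapses to the hom-based single-map initial structure. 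Everything else is immediate from \autoref{p:35} and the quantaloid structure of \(\Rels{\V}\).
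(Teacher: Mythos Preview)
The proposal is correct and matches the paper's intended approach: the paper states the proposition without an explicit proof, but places it immediately after \autoref{p:35} and \autoref{p:68}, which are exactly the ingredients you invoke --- the bijection between \(\V\)-functors \((X,a)\to\V^\kappa\) and distributors \((\kappa,1_\kappa)\modto(X,a)\), and the Kan-extension description of initial structures. Your identification \((\lambda(f))^\flat = \lambda(f^\flat)\) under the point-free reading of predicate liftings is precisely the bridge between the two, and the rest is the reindexing you describe.
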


Therefore, the Kantorovich lifting of a class of predicate liftings~\(\Lambda\) sends a
\(\V\)-category \((X,a)\) to the \(\V\)-category \((\ftF X, \ftF^\Lambda a)\), where
\(
\ftF^\Lambda a
= \bigwedge_{\lambda \in \Lambda} \ftF^\lambda a.
\)

Now, Theorem~\ref{p:57} is an immediate consequence of the following lemma.

\begin{lemma}
	\label{p:78}
	Let \(\lambda\) be a \(\kappa\)-ary \(\V\)-valued monotone predicate lifting for a functor \(\ftF \colon \SET \to \SET\), and let \((X,a)\) be a \(\V\)-category.
	Then,
	\[
		\bigwedge_{g \colon \kappa \relto X} \lambda(a \cdot g) \multimapdotinv \lambda(g)
		= \bigwedge_{r \colon (\kappa,1_\kappa) \modto (X, a)} \lambda(r) \multimapdotinv \lambda(r).
	\]
\end{lemma}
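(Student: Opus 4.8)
The plan is to establish the two inequalities separately, exploiting the fact that left-composition with $a$ is an idempotent closure operator whose fixpoints are exactly the distributors appearing on the right. Concretely, for any $\V$-relation $g \colon \kappa \relto X$ one has $g \leq a \cdot g$ by reflexivity of $a$, and $a \cdot (a \cdot g) = (a \cdot a) \cdot g = a \cdot g$ since $a \cdot a = a$; the right-unit condition is automatic, so $a \cdot g$ is always a distributor $(\kappa, 1_\kappa) \modto (X,a)$, and by Proposition~\ref{p:35} a $\V$-relation $r$ is such a distributor precisely when $a \cdot r = r$. I will use throughout that the extension $(-) \multimapdotinv (-)$ is \emph{antitone} in its second argument, i.e.\ $s \leq s'$ implies $t \multimapdotinv s' \leq t \multimapdotinv s$, which follows directly from its defining adjunction together with the monotonicity of relational composition.

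For the inequality ``$\leq$'', I would observe that every distributor $r$ also occurs as a value of the index $g$ on the left-hand side, and that for such $r$ the left-hand term collapses: since $a \cdot r = r$, we get $\lambda(a \cdot r) \multimapdotinv \lambda(r) = \lambda(r) \multimapdotinv \lambda(r)$. Hence the family indexed by distributors on the right is a subfamily of the family indexed by all $g$ on the left, and passing to the infimum over the larger index set can only decrease the value, giving the left-hand side $\leq$ the right-hand side.

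For the converse ``$\geq$'', I would fix an arbitrary $g \colon \kappa \relto X$ and instantiate the right-hand infimum at the distributor $r := a \cdot g$. Then $\lambda(r) \multimapdotinv \lambda(r)$ is one of the terms on the right, so it suffices to bound it above by the left-hand term $\lambda(a \cdot g) \multimapdotinv \lambda(g) = \lambda(r) \multimapdotinv \lambda(g)$. Here the hypotheses combine: from $g \leq a \cdot g = r$ and \emph{monotonicity of $\lambda$} I obtain $\lambda(g) \leq \lambda(r)$, whence antitonicity of the extension in its second argument yields $\lambda(r) \multimapdotinv \lambda(r) \leq \lambda(r) \multimapdotinv \lambda(g)$. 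Thus the right-hand infimum is below the left-hand term for every $g$, and taking the infimum over all $g$ delivers ``$\geq$''. The two inequalities together give the desired equality.

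The computation is short, and the only real subtlety is bookkeeping. One must be careful that $\multimapdotinv$ is antitone — not monotone — in its second slot, and recognise that the monotonicity assumption on $\lambda$ is exactly what bridges the gap between $\lambda(g)$ and $\lambda(a \cdot g)$: this is the single place where the word ``monotone'' in the hypothesis is used, and the ``$\geq$'' direction would fail without it. Everything else ($a \cdot a = a$, $g \leq a \cdot g$, and the identification of distributors with fixpoints of $a \cdot (-)$) is routine $\V$-relational algebra already encapsulated in Proposition~\ref{p:35} and Corollary~\ref{p:68}.
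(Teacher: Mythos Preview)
Your proof is correct and follows essentially the same route as the paper's: both directions hinge on the observations that $a\cdot g$ is always a distributor, that distributors are exactly the fixpoints $a\cdot r=r$ (Proposition~\ref{p:35}(\ref{p:36})), and that monotonicity of~$\lambda$ combined with antitonicity of $\multimapdotinv$ in its second argument yields $\lambda(a\cdot g)\multimapdotinv\lambda(a\cdot g)\le\lambda(a\cdot g)\multimapdotinv\lambda(g)$. Your write-up is in fact more explicit than the paper's, which dispatches the ``$\leq$'' direction in a single sentence referring to Proposition~\ref{p:35}(\ref{p:36}).
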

\begin{proof}
	Let \(g \colon \kappa \relto X\) be a \(\V\)-relation.
	Since \(1_X \leq a\), we have \(g \leq a \cdot g\).
	Also, given that \(a \cdot a \leq a\),
	we obtain that \(a \cdot g \colon (\kappa,1_\kappa) \modto (X, a)\) is a \(\V\)-distributor.
	Therefore, because \(\lambda\) is monotone,
	\[
		\lambda(a \cdot g) \multimapdotinv \lambda(a \cdot g) \leq
		\lambda(a \cdot g) \multimapdotinv \lambda(g).
	\]
	The other inequality is an immediate consequence of
	Proposition~\ref{p:35}(\ref{p:36}).
\end{proof}

\subsection{Proof of~\autoref{p:61}}

In the sequel, given a \(\V\)-relation \(r \colon X \relto Y\), we denote by \(r^\sharp \colon Y \to \V^X\) the function that sends each \(y \in Y\) to the function \(r(-,y) \colon X \to \V\).
Moreover, for every set \(S\), we denote the structure of the \(\V\)-category \(\V^S\) by \(h^S\).

\begin{proposition}
	\label{p:59}
	Every predicate lifting induced by a lax extension of a \(\SET\)-functor to \(\Rels{\V}\) is compatible with the lifting to \(\Cats{\V}\) induced by the lax extension.
\end{proposition}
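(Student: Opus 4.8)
The plan is to reduce the statement to a single inequality between \(\V\)-relations and then discharge it using only the lax-extension axioms. By \autoref{p:2}, a \(\kappa\)-ary predicate lifting \(\lambda\) is compatible with the induced lifting \(\eF \colon \Cats{\V} \to \Cats{\V}\) precisely when \(\lambda(1_{\ftII{\V^\kappa}}) \colon \ftF(\ftII{\V^\kappa}) \to \ftII{\V}\) is a \(\V\)-functor of type \(\eF\V^\kappa \to \V\), where \(\eF\V^\kappa = (\ftF\V^\kappa, \eF(h^\kappa))\) and \(h^\kappa\) denotes the power structure on \(\V^\kappa\). First I would pass to the point-free presentation used in \autoref{p:35}: writing \(\ev := (1_{\ftII{\V^\kappa}})^\flat \colon \kappa \relto \V^\kappa\) for the evaluation \(\V\)-relation \(\ev(k,\phi)=\phi(k)\), and \(\fp := \lambda(\ev) \colon 1 \relto \ftF\V^\kappa\) for the \(\V\)-relation corresponding to \(\lambda(1_{\ftII{\V^\kappa}})\), \autoref{p:35}(\ref{p:36}) identifies the required \(\V\)-functoriality with the single relational inequality \(\eF(h^\kappa)\cdot\fp \le \fp\) (the reverse inequality being automatic since \(k \le \eF(h^\kappa)(t,t)\)).

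Two ingredients feed into this inequality. The first is the purely \(\V\)-relational identity \(h^\kappa = \ev \multimapdotinv \ev\): unfolding the pointwise formula for the extension gives \((\ev\multimapdotinv\ev)(\psi,\phi)=\bigwedge_{k\in\kappa}\hom(\psi(k),\phi(k))\), which is exactly the power structure \([\psi,\phi]\). The second is the hypothesis that \(\lambda\) is induced by \(\eF\) (\autoref{p:11}): there is a \(\V\)-relation \(\fr \colon 1 \relto \ftF\kappa\) with \(\lambda(f)=\eF f \cdot \fr\) for every \(f \colon \kappa \relto X\); instantiating at \(f=\ev\) yields \(\fp = \eF(\ev)\cdot\fr\).

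With these substitutions the inequality becomes a short chain. We have \(\eF(h^\kappa)\cdot\fp = \eF(\ev\multimapdotinv\ev)\cdot\eF(\ev)\cdot\fr\); lax functoriality (L2) gives \(\eF(\ev\multimapdotinv\ev)\cdot\eF(\ev)\le\eF\bigl((\ev\multimapdotinv\ev)\cdot\ev\bigr)\); the defining adjunction of the Kan extension yields \((\ev\multimapdotinv\ev)\cdot\ev\le\ev\) (take \(t = \ev\multimapdotinv\ev\) in \(t\cdot\ev\le\ev \iff t\le\ev\multimapdotinv\ev\)); and monotonicity (L1) then gives \(\eF\bigl((\ev\multimapdotinv\ev)\cdot\ev\bigr)\le\eF(\ev)\). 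Post-composing with \(\fr\) and chaining produces \(\eF(h^\kappa)\cdot\fp \le \eF(\ev)\cdot\fr = \fp\), as required.

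The individual calculations are routine; the one genuinely delicate point is the bookkeeping of the point-free translation, namely verifying that \(\lambda(1_{\ftII{\V^\kappa}})\) really corresponds to \(\lambda(\ev)=\eF(\ev)\cdot\fr\) and that the compatibility criterion of \autoref{p:2} is faithfully captured by \(\eF(h^\kappa)\cdot\fp\le\fp\) in the chosen composition convention. Once the identity \(h^\kappa=\ev\multimapdotinv\ev\) is in place, the remainder is forced by (L1), (L2) and the extension adjunction, and uses nothing about \(\fr\) beyond its existence.
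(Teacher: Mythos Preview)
Your argument is correct and follows essentially the same strategy as the paper: both reduce, via \autoref{p:2} and \autoref{p:35}, to the relational statement \(\eF(h^\kappa)\cdot\lambda(\ev)\le\lambda(\ev)\) (the paper in fact obtains equality). The difference lies in how this relational fact is discharged. The paper invokes \cite[Theorem~3.11]{GoncharovEA22} together with the factorisation \(\ev_\kappa = h^\kappa\cdot 1_\kappa^\sharp\) to rewrite \(\lambda(\ev_\kappa)\) as \(\eF(h^\kappa)\cdot\ftF(1_\kappa^\sharp)\cdot\lambda(1_\kappa)\), and then uses idempotence \(\eF(h^\kappa)\cdot\eF(h^\kappa)=\eF(h^\kappa)\). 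You instead identify \(h^\kappa=\ev\multimapdotinv\ev\) directly and work from the raw definition of an induced predicate lifting (\autoref{p:11}) together with (L1), (L2) and the extension counit. Your route is more self-contained, avoiding the external citation, while the paper's route yields the slightly stronger equality and keeps the calculation closer to the machinery developed in the companion paper; both are short once the reduction is in place.
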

\begin{proof}
	Let \(\lambda\) be a \(\kappa\)-ary \(\V\)-valued predicate lifting for a functor \(\ftF \colon \SET \to \SET\) that is induced by a lax extension \(\eF \colon \Rels{\V} \to \Rels{\V}\) of \(\ftF\).
	Then, by \cite[Theorem~3.11]{GoncharovEA22},
	\begin{align*}
		\eF h^\kappa \cdot \lambda(\ev_\kappa)
		 & = \eF h^\kappa \cdot \lambda(h^\kappa \cdot 1_\kappa^\sharp)                     \\
		 & = \eF h^\kappa \cdot \eF h^\kappa \cdot \ftF(1_\kappa^\sharp) \cdot \lambda(1_k) \\
		 & = \eF h^\kappa \cdot \ftF(1_\kappa^\sharp) \cdot \lambda(1_k)                    \\
		 & = \lambda (\ev_\kappa).
	\end{align*}
	Therefore, the claim follows from Propositions~\ref{p:35} and~\ref{p:2}.
\end{proof}

\begin{proposition}
	Let \(\ftF \colon \SET \to \SET\) be a functor.
	The monotone map \(\ftI \colon \LaxF \to \LiftF_\ftI\) is order-reflecting.
\end{proposition}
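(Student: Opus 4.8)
The plan is to unfold what the statement says and then reduce arbitrary \(\V\)-relations to \(\V\)-category structures. Recall that \(\ftI\) sends a lax extension \(\eF\) to the lifting \((X,a)\mapsto(\ftF X,\eF a)\), and that the order on \(\LiftFi\) is pointwise; hence \(\ftI(\eF)\le\ftI(\eF')\) means exactly that \(\eF a\le\eF' a\) for every \(\V\)-category \((X,a)\). So order-reflection amounts to the implication: if \(\eF a\le\eF' a\) for all distances \(a\), then \(\eF r\le\eF' r\) for every \(\V\)-relation \(r\colon X\relto Y\). The key observation driving the argument is that every \(\V\)-relation can be reconstructed from a single \(\V\)-category structure together with the functorial action of \(\ftF\) on two coproduct injections, and the latter is common to all lax extensions of \(\ftF\).

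First I would record two \emph{normality} identities, valid for any lax extension: for a function \(f\) (viewed as a \(\V\)-relation via its graph), \(\eF(r\cdot f)=\eF r\cdot\ftF f\) and \(\eF(f^\circ\cdot r)=(\ftF f)^\circ\cdot\eF r\). The \(\ge\) directions are immediate from \ref{p:0} and \ref{p:26}. For the \(\le\) direction of the first, one inserts \((\ftF f)^\circ\cdot\ftF f\ge 1_{\ftF X}\) on the domain side and collapses: \(\eF(r\cdot f)\le\eF(r\cdot f)\cdot(\ftF f)^\circ\cdot\ftF f\le\eF(r\cdot f\cdot f^\circ)\cdot\ftF f\le\eF r\cdot\ftF f\), using \ref{p:0}, \ref{p:26}, and finally \(f\cdot f^\circ\le 1_Y\) with \ref{p:102}; the second identity is symmetric. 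These identities hold verbatim for \(\eF'\) as well.

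Next I would build, for a given \(r\colon X\relto Y\), the \emph{collage} \(\V\)-category \(c\) on the set \(X+Y\) whose block decomposition along \((X,Y)\) is \(1_X\) on \(X\times X\), \(1_Y\) on \(Y\times Y\), \(r\) on \(X\times Y\), and \(\bot\) on \(Y\times X\). Reflexivity holds since the diagonal blocks are identities; transitivity reduces to \(1_X\otimes r\le r\), \(r\otimes 1_Y\le r\), and the cases involving a \(\bot\) factor, all of which are trivial since \(1_X,1_Y\) are units for composition and \(\bot\) is absorbing. Writing \(i_X\colon X\to X+Y\) and \(i_Y\colon Y\to X+Y\) for the injections, a direct computation gives \(r=i_Y^\circ\cdot c\cdot i_X\).

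Finally, applying the two normality identities to this decomposition yields \(\eF r=(\ftF i_Y)^\circ\cdot\eF c\cdot\ftF i_X\), and the identical formula for \(\eF'\). Since \(c\) is a genuine \(\V\)-category, the hypothesis \(\ftI(\eF)\le\ftI(\eF')\) gives \(\eF c\le\eF' c\); monotonicity of composition in the quantaloid \(\Rels{\V}\) then gives \(\eF r=(\ftF i_Y)^\circ\cdot\eF c\cdot\ftF i_X\le(\ftF i_Y)^\circ\cdot\eF' c\cdot\ftF i_X=\eF' r\). As \(r\) was arbitrary, \(\eF\le\eF'\), which is precisely order-reflection. I expect the main obstacle to be the bookkeeping in the two normality identities — keeping the composition order straight and inserting the inequalities \(f\cdot f^\circ\le 1\le f^\circ\cdot f\) correctly — since the collage construction and the final chain of inequalities are then essentially formal.
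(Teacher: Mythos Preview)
Your proof is correct and follows the same overall strategy as the paper: factor an arbitrary \(\V\)-relation as (converse of a function)\,\(\cdot\)\,(\(\V\)-category structure)\,\(\cdot\)\,(function), apply the normality identities for lax extensions (which you derive correctly from \ref{p:102}--\ref{p:0}), and conclude from the hypothesis on \(\V\)-category structures. The only difference is the specific factorization. You build the collage of \(r\) on \(X+Y\), obtaining \(r = i_Y^\circ \cdot c \cdot i_X\); the paper instead records the one-line hint \(r = (r^\sharp)^\circ \cdot h^X \cdot 1_X^\sharp\), where \(h^X\) is the hom-structure on the power \(\V^X\) and \(r^\sharp\colon Y\to\V^X\), \(1_X^\sharp\colon X\to\V^X\) are the currying maps. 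Your collage is more elementary --- it lives on the small set \(X+Y\) rather than \(\V^X\), and the \(\V\)-category axioms for \(c\) are visibly trivial --- while the paper's factorization is more uniform (the middle object depends only on \(X\)) and reuses the \((-)^\sharp\) machinery that already appears elsewhere in the appendix. Either way, the normality identities do the real work, and you have those right.
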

\begin{proof}
	Note that every \(\V\)-relation \(r \colon X \relto Y\) can be factorized as \(r = (r^\sharp)^\circ \cdot h^X \cdot 1_X^\sharp\).
\end{proof}

Let  \(\eF \colon \Rels{\V} \to \Rels{\V}\) be a lax extension of \(\ftF\).
Then, by Propositions~\ref{p:59} and~Remark~\ref{p:5}, \(\ftM\ftP\ftI(\eF)\)
contains the all predicate liftings induced \(\eF\).
Hence, by Theorem~\ref{d:thm:1},
\[
	\eF^{\ftM\ftP\ftI(\eF)} \leq \eF.
\]

On the other hand, let \(\ftbF \colon \Cats{\V} \to \Cats{\V}\) be a lifting of \(\ftF\) that preserves initial morphisms.
Then, by Theorem~\ref{p:57} and the fact that \(\ftM\ftP \colon \LiftFi \to \PredFm\) is left adjoint to \(\ftF^{(-)} \colon \PredFm \to \LiftFi\),
\[
	\ftbF \leq \ftF^{\ftM\ftP(\ftbF)} = \ftI(\eF^{\ftM\ftP(\ftbF)}).
\]
\qed

\subsection{Proof of~\autoref{p:19} and~\autoref{p:1}}

We will show~\autoref{p:19} as~\autoref{p:1} follows identically by taking into account \cite[Theorem~2.16]{GoncharovEA22} and~\cite[Theorem~4.1]{GoncharovEA22}.
We already observed that every lifting induced by a lax extension preserves initial morphisms, and the fact that it is also locally monotone follows immediately from L1 and L3 since \(f \leq g\) in \(\Cats{\V}((X,a),(Y,b))\) iff \(1_X \leq g^\circ \cdot b \cdot f\).

On the other hand, suppose that \(\ftbF \colon \Cats{\V} \to \Cats{\V}\) is a lifting of \(\ftF\) that preserves initial morphisms and is locally monotone.
Then, since every \(\V\)-functor is monotone, we have that every \(\kappa\)-ary \(\V\)-valued predicate lifting \(\lambda\) for \(\ftF\) compatible with \(\ftbF\) is monotone, as each \(X\)-component is given by the composite of monotone maps
\begin{displaymath}
	\begin{tikzcd}
		\Cats{\V}((X,1_X),\V^\kappa) & \Cats{\V}(\ftbF(X,1_X),\ftbF (\V^\kappa)) \\
		& \Cats{\V}(\ftbF((X,1_X),\V).
		\ar[from=1-1, to=1-2, "\ftF (-)"]
		\ar[from=1-2, to=2-2, "\lambda(1_{\V^\kappa}) \cdot -"]
		\ar[from=1-1, to=2-2, "\lambda_X"']
	\end{tikzcd}
\end{displaymath}
Therefore, by Theorem~\ref{p:57} and the proof of Theorem~\ref{p:53}, we conclude that \(\ftbF = \eF^{\ftP(\ftbF)}\).
\qed

\subsection{Details of~\autoref{sec:bd}}

\begin{proposition}
	\label{p:501}
	Let \(\ftbF \colon \Cats{\V} \to \Cats{\V}\) be a lifting of a functor \(\ftF \colon \SET \to \SET\) that preserves initial morphisms and corestricts to \(\Cats{\V}_\sym\).
	Then, for every \(\ftF\)-coalgebra \(\alpha \colon X \to \ftF X\), \(\bd_\alpha^\ftbF\) is symmetric.
\end{proposition}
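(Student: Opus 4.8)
The plan is to exploit the greatest-fixpoint description of behavioural distance. Since~$\ftbF$ preserves initial morphisms, the distance $\bd_\alpha^\ftbF$ coincides (as recalled in Section~\ref{sec:bd}) with the greatest $\V$-category structure~$a$ on~$X$ for which $\alpha\colon(X,a)\to\ftbF(X,a)$ is a $\V$-functor; I will call such structures \emph{admissible}. That the join of admissible structures is again admissible — so that a greatest one exists — uses only that every lifting is monotone on structures (if $a\le a'$, then applying~$\ftbF$ to the identity $\V$-functor $(X,a)\to(X,a')$ yields $\ftbF a\le\ftbF a'$). Writing $a=\bd_\alpha^\ftbF$, the strategy is to produce a \emph{symmetric} admissible structure that contains~$a$, and then to conclude by maximality.

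The candidate is the reflection of $(X,a)$ into $\Cats{\V}_\sym$. By Remark~\ref{rem:sym-sep}, $\Cats{\V}_\sym$ is reflective in $\Cats{\V}$, so there is a symmetric structure $\bar a\ge a$ on~$X$ whose reflection unit is the identity $\id\colon(X,a)\to(X,\bar a)$ and which enjoys the universal property that every $\V$-functor from $(X,a)$ into a symmetric $\V$-category factors, as the same underlying map, through $(X,\bar a)$. Here the hypothesis that $\ftbF$ corestricts to $\Cats{\V}_\sym$ enters decisively: the $\V$-category $\ftbF(X,a)$ is symmetric, so the $\V$-functor $\alpha\colon(X,a)\to\ftbF(X,a)$ factors through the reflection, yielding a $\V$-functor $\alpha\colon(X,\bar a)\to\ftbF(X,a)$. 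Post-composing with the identity map $\ftbF(X,a)\to\ftbF(X,\bar a)$ — which is a $\V$-functor by monotonicity of~$\ftbF$, since $a\le\bar a$ — shows that $\alpha\colon(X,\bar a)\to\ftbF(X,\bar a)$ is a $\V$-functor. Hence $\bar a$ is admissible.

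It then remains only to invoke maximality: $\bar a$ is admissible with $\bar a\ge a$, while~$a$ is by definition the greatest admissible structure, so $\bar a\le a$ and therefore $a=\bar a$ is symmetric. I expect the main conceptual obstacle to be the choice of the enlarged structure: the naive candidate, the pointwise join $a\vee a^\circ$, is in general not a $\V$-category (transitivity can fail), so one genuinely needs the reflection $\bar a$ rather than an explicit formula. The two technical points to verify carefully are that the reflection unit is carried by the identity on underlying sets — so that $\alpha$ literally remains the structure map along the factorization — and that this factorization is licensed precisely by the symmetry of the codomain $\ftbF(X,a)$; everything else reduces to the monotonicity of liftings and the greatest-structure description of $\bd_\alpha^\ftbF$ (which is where preservation of initial morphisms is used).
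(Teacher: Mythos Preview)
Your proof is correct, but it proceeds differently from the paper's argument. The paper also starts from the greatest-fixpoint description of $\bd_\alpha^{\ftbF}$ available under preservation of initial morphisms, but then observes directly that the structure map $\alpha\colon(X,\bd_\alpha^{\ftbF})\to\ftbF(X,\bd_\alpha^{\ftbF})$ is itself \emph{initial} (if $a'$ is the initial structure on~$X$ induced by~$\alpha$ from $\ftbF(X,a)$, then $a\le a'$ and monotonicity of~$\ftbF$ makes $a'$ admissible, so $a'=a$). Since the codomain is symmetric by the corestriction hypothesis and $\Cats{\V}_\sym$ is closed under initial cones in $\Cats{\V}$, the domain structure $\bd_\alpha^{\ftbF}$ is symmetric as well.

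So the paper exploits the \emph{coreflective} side of Remark~\ref{rem:sym-sep} (closure under initial lifts), while you exploit the \emph{reflective} side (enlarge~$a$ to its symmetric reflection~$\bar a$ and show $\bar a$ is still admissible). Both arguments ultimately rest on the same two ingredients---monotonicity of liftings on structures and the greatest-fixpoint characterization---and are of comparable length. The paper's route is marginally more self-contained in that closure under initial cones is explicit from the formula for initial structures, whereas your route requires checking that the reflection unit is identity-carried (which you rightly flag; it holds because the infimum of all symmetric structures above~$a$ is again a symmetric $\V$-category structure). Your approach has the minor advantage of making no use of the intermediate fact that $\alpha$ is initial.
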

\begin{proof}
	Let \(\alpha \colon X \to \ftF X\) be an \(\ftF\)-coalgebra.
	Since the lifting preserves initial morphisms, then by \cite[Proposition~12]{ForsterEA23} (the same proof holds for the non-symmetric case), \(\bd_\alpha^\ftF\) is the greatest \(\V\)-categorical structure that makes \(\alpha \colon (X,\bd_\alpha^\ftF) \to \ftbF(X,\bd_\alpha^\ftF)\) a \(\V\)-functor.
	In particular, this means that \(\alpha \colon (X,\bd_\alpha^\ftF) \to \ftbF (X,\bd_\alpha^\ftF)\) is initial.
	Therefore, as \(\ftbF\) corestricts to \(\Cats{\V}_\sym\) and this category is closed under initial cones in \(\Cats{\V}\), we conclude that \(\bd_\alpha^\ftF\) is symmetric.
\end{proof}

\begin{proposition}
	\label{p:502}
	Let \(\eF \colon \Rels{\V} \to \Rels{\V}\) be a lax extension of a functor \(\ftF \colon \SET \to \SET\) and let \(\alpha \colon X \to \ftF X\) be an \(\ftF\)-coalgebra.
	Every symmetric \(\eF\)-simulation on \(\alpha\) is also a \(\eF^s\)-(bi)simulation.
\end{proposition}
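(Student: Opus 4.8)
The plan is to strip the statement down to a purely calculational fact in the quantaloid $\Rels{\V}$, exploiting that $\alpha$, being a function, is a \emph{map}: as a $\V$-relation it satisfies $\alpha \cdot \alpha^\circ \le 1_{\ftF X}$ and $1_X \le \alpha^\circ \cdot \alpha$. Recall that the symmetrization is given by $\seF r = \eF r \wedge (\eF(r^\circ))^\circ$, so that for a symmetric relation $r = r^\circ$ we have $\seF r = \eF r \wedge (\eF r)^\circ$. Let $s$ be a symmetric $\eF$-simulation, i.e.\ $s = s^\circ$ and $\alpha \cdot s \le \eF s \cdot \alpha$. Then $\seF s = \eF s \wedge (\eF s)^\circ$, and the goal is $\alpha \cdot s \le (\eF s \wedge (\eF s)^\circ) \cdot \alpha$. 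Since precomposition with the function $\alpha$ is computed by reindexing in the first argument and therefore preserves binary meets, this is equivalent to the conjunction of $\alpha \cdot s \le \eF s \cdot \alpha$ and $\alpha \cdot s \le (\eF s)^\circ \cdot \alpha$.

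The first of these is exactly the simulation hypothesis, so the work is to derive the second. First I would apply the involution $(-)^\circ$ to the hypothesis $\alpha \cdot s \le \eF s \cdot \alpha$ and use $s = s^\circ$ to obtain $s \cdot \alpha^\circ \le \alpha^\circ \cdot (\eF s)^\circ$. Composing on the left with $\alpha$ and using $\alpha \cdot \alpha^\circ \le 1_{\ftF X}$ yields $\alpha \cdot s \cdot \alpha^\circ \le (\eF s)^\circ$. Composing on the right with $\alpha$ and using $1_X \le \alpha^\circ \cdot \alpha$ then gives $\alpha \cdot s \le \alpha \cdot s \cdot \alpha^\circ \cdot \alpha \le (\eF s)^\circ \cdot \alpha$, as required. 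Combining the two inequalities via meet-preservation shows that $s$ is an $\seF$-simulation. Finally, since $\seF$ preserves converses and $s$ is symmetric, $\seF s = \eF s \wedge (\eF s)^\circ$ is itself symmetric, so the $\seF$-simulation $s$ is automatically an $\seF$-bisimulation; this accounts for the parenthetical ``(bi)''.

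I expect the only delicate points to be the bookkeeping with the two adjunction inequalities $\alpha \cdot \alpha^\circ \le 1_{\ftF X}$ and $1_X \le \alpha^\circ \cdot \alpha$ — in particular getting the direction of the two insertions right, so that one is used to shrink and the other to grow the expression — and the observation that composing with the function $\alpha$ commutes with the binary meet. The latter is where the hypothesis that $\alpha$ is a function (rather than an arbitrary $\V$-relation) is genuinely needed, since meets do not in general distribute over composition in a quantaloid; this is the step most worth spelling out carefully.
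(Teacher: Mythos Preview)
Your proposal is correct and follows essentially the same route as the paper's proof: apply $(-)^\circ$ to the simulation inequality, use $s=s^\circ$, then the map adjunction for $\alpha$ to obtain $\alpha\cdot s \le (\eF s)^\circ\cdot\alpha$, and finally combine with the original inequality using that $-\cdot\alpha$ preserves meets. The only presentational difference is that the paper packages your two explicit compose-and-absorb steps with $\alpha\cdot\alpha^\circ\le 1$ and $1\le\alpha^\circ\cdot\alpha$ into a single equivalence $s\cdot\alpha^\circ \le \alpha^\circ\cdot(\eF s)^\circ \Longleftrightarrow \alpha\cdot s \le (\eF s)^\circ\cdot\alpha$ (the adjunction $\alpha\dashv\alpha^\circ$), and it leaves the ``(bi)'' remark implicit, whereas you spell it out.
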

\begin{proof}
	Let \(s\) be a symmetric \(\eF\)-simulation on \(\alpha\).
	Then,
	\begin{align*}
		                    & \alpha \cdot s \leq \eF s \cdot \alpha                           \\
		\Longleftrightarrow & s^\circ \cdot \alpha^\circ \leq \alpha^\circ \cdot (\eF s)^\circ \\
		\Longrightarrow     & s \cdot \alpha^\circ \leq \alpha^\circ \cdot (\eF s)^\circ       \\
		\Longleftrightarrow & \alpha \cdot s \leq (\eF s)^\circ \cdot \alpha.
	\end{align*}
	Therefore, \(\alpha \cdot s \leq (\eF s \wedge (\eF s)^\circ) \cdot \alpha = \eF^s s \cdot \alpha\), since the map \(-\cdot \alpha\) preserves infima.
\end{proof}

\subsection{Details of \autoref{p:14}(\ref{p:17})}

The usual proof shows that the Prokhorov distance defines a pseudometric (e.g. \cite[p.27]{Hub81}), and to see that ``measuring preimages'' define non-expansive maps w.r.t. this pseudometric, note that for every non-expansive map \(f \colon X \to Y\),
\(
(f^{-1}B)^\epsilon \subseteq f^{-1}(B^\epsilon),
\)
for all \(B \subseteq Y\) and \(\epsilon \in [0,1]\).
Therefore, the Prokhorov pseudometric defines a functor \(\ftD^P \colon \PBMET \to \PBMET\).
Now, suppose that \(f \colon X \to Y\) is an isometry.
Let  \(\mu, \upsilon\) be probablity measures on \(X\), and let \(\epsilon \in (0,1]\) such that for every Borel set \(B \subseteq Y\), \(\mu(B) \leq \upsilon(B^\epsilon) + \epsilon\).
Consider a Borel set \(A \subseteq X\).
Then, \(\overline{fA} \subseteq Y\) is a Borel set which, by hypothesis, yields \(\mu(A) \leq \upsilon(f^{-1}(\overline{fA})^\epsilon) + \epsilon\).
By \cite[Lemma~3.4]{Hub81}, \({(\overline{fA})}^\epsilon = ({fA})^\epsilon\), and since \(f\) is initial \(f^{-1}(fA)^\epsilon = A^\epsilon\).
Hence, \(\mu(A) \leq \upsilon(A^\epsilon) + \epsilon\).
Therefore, \(d_X^P(\mu,\upsilon) \leq d_Y^P(\ftD f(\mu),\ftD f(\upsilon))\).

\subsection{Details of \autoref{p:14}(\ref{p:15})}

For every \(\V\)-category \((X,a)\), the functor
\(
(X,a)\times- \colon\Cats{\V}\to\Cats{\V}
\)
is Kantorovich. Under certain conditions, this functor has a right adjoint
which is Kantorovich as well. To see that, assume now that the underlying
lattice of \(\V\) is Heyting, and we denote the right adjoint of
\(u\wedge- \colon\V\to\V\) by \((-)^{u}\colon\V\to\V\). It has been shown
\cite{CH06,CHS09} that \((X,a)\times-\) is left adjoint if and only if, for
all \(x,z\in X\) and \(u,v\in\V\),
\begin{displaymath}
	\bigvee_{y\in Y}(a(x,y)\wedge u)\otimes(a(y,z)\wedge v)\geq
	a(x,z)\wedge(u\otimes v).
\end{displaymath}
For such \(X=(X,a)\), the right adjoint \((-)^{X}\) of \(X\times-\) sends a
\(\V\)-category \(Y=(Y,b)\) to the \(\V\)-category \(Y^{X}=(Y^{X},c)\) with
underlying set
\begin{displaymath}
	\{\text{all \(\V\)-functors \((1,k)\times(X,a)\to(Y,b)\)}\}
\end{displaymath}
and, for \(h,k\in Y^{X}\),
\begin{displaymath}
	c(h,k)=\bigwedge_{x_{1},x_{2}\in X}b(h(x_{1}),k(x_{2}))^{a(x_{1},x_{2})}.
\end{displaymath}
For a \(\V\)-functor \(f \colon (Y_{1},b_{1})\to(Y_{2},b_{2})\), the
\(\V\)-functor \(f^{X}\colon (Y_{1}^{X},c_{1})\to (Y_{2}^{X},c_{2})\) sends
\(h\in Y_{1}^{X}\) to \(f\cdot h\), moreover, if \(f\) is initial, then, for
all \(h,k\in Y_{1}^{X}\),
\begin{align*}
	c_{1}(h,k)
	 & =\bigwedge_{x_{1},x_{2}\in X}b_{1}(h(x_{1}),k(x_{2}))^{a(x_{1},x_{2})}       \\
	 & =\bigwedge_{x_{1},x_{2}\in X}b_{2}(f(h(x_{1})),f(k(x_{2})))^{a(x_{1},x_{2})}
	=c_{2}(f^{X}(h),f^{X}(k)).
\end{align*}

\subsection{Proof of~\autoref{thm:kantorovich-initial} and~\autoref{p:18}}
See proof of \autoref{p:53}.

\subsection{Proof of~\autoref{p:16}}

\begin{lemma}
	\label{p:20}
	Let \(f \colon (X,a) \to (Z,c)\), \(g \colon (Y,b) \to (Z,c)\) and \(h \colon (X,a) \to (Y,b)\) be \(\V\)-functors such that \(f = g \cdot h\).
	If \(f\) is initial, then for all \(x,x' \in X\), \(a(x,x') = b(h(x),h(x'))\).
\end{lemma}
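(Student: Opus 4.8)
The plan is to prove the equality by establishing the two inequalities $a(x,x') \le b(h(x),h(x'))$ and $b(h(x),h(x')) \le a(x,x')$ and then combining them. The first of these is immediate from functoriality of $h$: since $h \colon (X,a) \to (Y,b)$ is a $\V$-functor, we have $a(x,x') \le b(h(x),h(x'))$ for all $x,x' \in X$ directly from the definition of a $\V$-functor.

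For the reverse inequality I would chain the functoriality of $g$ with the initiality of $f$. Functoriality of $g \colon (Y,b) \to (Z,c)$ gives $b(h(x),h(x')) \le c(g(h(x)),g(h(x')))$, and since $f = g \cdot h$ this right-hand side is exactly $c(f(x),f(x'))$. Initiality of $f$ means that $a$ coincides with the structure obtained by pulling $c$ back along $f$ (the initial structure for the singleton cone consisting of $f$), that is, $a(x,x') = c(f(x),f(x'))$ for all $x,x'$. Substituting, we obtain $b(h(x),h(x')) \le a(x,x')$.

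Putting the two inequalities together yields $a(x,x') = b(h(x),h(x'))$, as claimed. The argument is entirely a matter of correctly unfolding definitions, and I do not expect any genuine obstacle. The only point worth flagging is that initiality of $f$ supplies an \emph{equality} $a(x,x') = c(f(x),f(x'))$, rather than merely the inequality that functoriality alone would give, and it is precisely this extra strength, together with the factorization $f = g \cdot h$, that closes the loop and forces the distances along $h$ to agree with those in $X$.
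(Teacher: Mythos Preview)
Your proposal is correct and follows essentially the same approach as the paper's proof: both chain the inequalities $a(x,x') \le b(h(x),h(x')) \le c(g\cdot h(x),g\cdot h(x')) = c(f(x),f(x')) = a(x,x')$ using functoriality of $h$ and $g$, the factorization $f = g\cdot h$, and initiality of $f$.
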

\begin{proof}
	Let \(x,x' \in X\).
	Then,
	\(a(x,x') \leq b(h(x),h(x')) \leq c(g \cdot h (x),g \cdot h(x')) = c(f(x),f(x')) = a(x,x').\)
	\qed
\end{proof}

Let \(\ftF \colon \Cats{\V}_\sym \to \Cats{\V}_\sym\) be an \(\omega\)-bounded functor that preserves initial morphisms, and let \(\Lambda\) be the set of all finitary predicate liftings for \(\ftF\).
We claim that \(\ftF\) is \(\Lambda\)-Kantorovich.
Suppose that \(X\) is a symmetric \(\V\)-category and \(\fx,\fy\) are elements of \(\ftF X\).
Then, since \(\ftF\) is \(\omega\)-bounded, there is a finite subcategory \(i \colon X_0 \hookrightarrow X\) and \(\fx',\fy' \in \ftF X_0\) such that \(\fx = \ftF i (\fx')\) and \(\fy = \ftF i(\fy')\).
Moreover, given that \(i\) is initial and for every cardinal \(\kappa\) the \(\V\)-category \(\V_s^\kappa\) is injective in \(\Cats{\V}_\sym\) w.r.t. to initial morphisms, every \(\V\)-functor \(f \colon X_0 \to \V^\kappa\) can be factorized as \(\overline{f} \cdot i\), for some \(\V\)-functor \(\overline{f} \colon X \to \V_s^\kappa\).
Hence, for every \(\kappa\)-ary \(\lambda\) in \(\Lambda\), and every \(\V\)-functor \(f \colon X_0 \to \V_s^\kappa\), \(\lambda(f) = \lambda(\overline{f}) \cdot \ftF i\).
Therefore, as the cone of all \(\V\)-functors \(\lambda(f) \colon \ftF X_0 \to \V_s\) determined by each \(\kappa\)-ary \(\lambda\) in \(\Lambda\) and every \(\V\)-functor \(f \colon X_0 \to \V^\kappa\) is initial (see the proof of Theorem~\ref{p:53}), by Lemma~\ref{p:18} we obtain
\begin{align*}
	\ftF X(\fx,\fy) & = \bigwedge_{\lambda \in \Lambda}\bigwedge_{\overline{f} \colon X \to \V_s^{\ar(\lambda)}} \hom_s(\lambda(\overline{f})(\fx),\lambda(\overline{f})(\fy)) \\
	                & \geq \bigwedge_{\lambda \in \Lambda}\bigwedge_{g \colon X \to \V_s^{\ar(\lambda)}} \hom_s(\lambda(g)(\fx),\lambda(g)(\fy)),
\end{align*}
where \(\ar(\lambda)\) denotes the arity of the predicate lifting \(\lambda\).
\end{document}